 \DeclareMathOperator{\dist}{dist}
 \DeclareMathOperator{\pr}{pr}
\theoremstyle{plain}
\newtheorem{thm}{Theorem}[section]
\newtheorem{lem}[thm]{Lemma}
\newtheorem{prop}[thm]{Proposition}
\newtheorem{cor}[thm]{Corollary}
\theoremstyle{definition}
\newtheorem{dff}[thm]{Definition}
\newtheorem{rem}[thm]{Remark}
\newtheorem{conv}[thm]{Convention}
\numberwithin{equation}{section}
\def\a{\alpha}
\def\bx{\bar x}
\def\by{\bar y}
\def\cpp{C_{\phi,\psi}}
\def\C{\mathcal{W}}
\def\ab{A^{\beta}}
\def\H{\mathcal{H}}
\def\I{\mathcal{I}}
\def\U{\mathcal{U}}
\def\V{\mathcal{V}}
\def\L{\mathcal{L}}
\def\X{\frak{X}}
\def\l{*}
\def\g{\Gamma}
\def\lff{\lambda_{\f}}
\def\phi{\varphi}
\def\p{\partial}
\def\pp{\phi}
\def\fp{f^{\psi}}
\def\cc{\CC^{\infty}}
\def\as{\a_{st}}
\def\ci{\circ}
\def\cfr{C_{\phi,r}}
\def\rz{\mathbb{R}}
\def\R{\rz}
\def\N{\mathbb{N}}
\def\wyr#1{\textit{#1}}
\def\Z{\mathbb{Z}}
\def\s{\subset}
\def\ss{a_n}
\def\cm{\CC^{\infty}(M)}
\def\t{\times}
\def\r{\rightarrow}
\def\cc{\CC^{\infty}}
\def\dnk{\cc(\R^m,\R^m)}
\def\as{\alpha_{st}}
\def\ac{\as}
\def\rr{\R^{m}}
\def\crr{\CC^{\infty}(\R^{m})}
\def\dz{{\partial\over \partial x_0}}
\def\x0{x_0}
\def\dxi{{\partial\over\partial x_i}}
\def\dx0{{\partial\over\partial x_0}}
\def\dyi{{\partial\over\partial y_{i}}}
\def\ld{,\ldots,}
\def\pp{\partial}
\def\f{{\bf f}}
\def\gd{\Gamma^{\dd}}
 \DeclareMathOperator{\cont}{Cont}
 \def\con{\cont_c(M,\alpha)_0}
 \def\cork{\cont_c(\C_k^m,\as)_0}
 \def\corr{\cont_c(\R^{m},\as)_0}
 \DeclareMathOperator{\diff}{Diff}
 \DeclareMathOperator{\id}{id}
\DeclareMathOperator{\fl}{Fl}
 \DeclareMathOperator{\supp}{supp}
\DeclareMathOperator{\dd}{d} \DeclareMathOperator{\dom}{dom}
\DeclareMathOperator{\CC}{C}
\keywords{Contactomorphism, group of diffeomorphisms, commutator,
simplicity, perfectness, homology of groups, fragmentation.  }
\subjclass{22E65, 57R50, 53D10}
\thanks{Supported in part by the AGH grant n. 11.420.04}
\address{Faculty of Applied Mathematics, AGH University of Science and
\linebreak Technology, al. Mickiewicza 30, 30-059 Krak\'ow,
Poland} \email{tomasz@uci.agh.edu.pl}
\date{March 3, 2010;  revised version}
\title{Commutators of contactomorphisms }
\author{ Tomasz Rybicki}
\begin{document}

\maketitle

\begin{abstract}
The group of volume preserving diffeomorphisms, the group of
symplectomorphisms and the group of contactomorphisms constitute
the classical groups of diffeomorphisms. The first homology groups
of the compactly supported identity components of the  first two
groups have been computed by Thurston and Banyaga, respectively.
In this paper we  solve  the long standing problem  on the
algebraic structure of the third classical diffeomorphism group,
i.e. the contactomorphism group. Namely we show that  the
compactly supported identity component of the group of
contactomorphisms is perfect and simple (if the underlying
manifold is connected). The result could be applied in various
ways.
\end{abstract}

\section{Introduction}

Let $(M,\alpha)$ be a contact manifold, i.e. $M$ is a $C^\infty$
smooth paracompact manifold of dimension $m=2n+1$, $m\geq 3$, and
$\alpha$ is a $C^\infty$ $1$-form on $M$ such that
$\alpha\wedge(d\alpha)^n$ is a volume form. A contactomorphism $f$
of $(M,\alpha)$ is a $C^\infty$ diffeomorphism of $M$ such that
$f^*\alpha=\lambda_f\alpha$, where $\lambda_f$ is a smooth nowhere
vanishing function on $M$ depending on $f$. In other words, a
contactomorphism $f$ is a diffeomorphism whose tangent map $Tf$
preserves the $C^\infty$ contact hyperplane field $\H=\ker\alpha$.
Notice that contactomorphisms of $(M,\alpha)$ are determined by
the  contact hyperplane field $\H$.

Let $\cont(M,\alpha)$ denote the group of contactomorphisms of
$(M,\alpha)$, and let $\cont_c(M,\alpha)$ be its compactly
supported subgroup. Observe that $\cont(M,\alpha)$ carries  the
structure of an infinite dimensional Lie group (see, e.g., [9]).
Then, in view of the local contractibility of $\cont_c(M,\alpha)$,
its identity component $\con$ coincides with all $f\in\cont(M,\a)$
which can be joined with the identity by a smooth isotopy in
$\cont_c(M,\a)$.
 Our main result is the following

\begin{thm}
The group $\con$ is perfect, that is $\con$ is equal to its own
commutator subgroup.
\end{thm}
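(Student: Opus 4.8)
The plan is the classical Thurston--Mather--Banyaga scheme: a \emph{fragmentation} step reducing the assertion to a local statement in a Darboux ball, followed by a \emph{displacement (infinite-swindle)} argument for that local statement. A structural remark is useful at the outset: unlike in the symplectic case, every contact isotopy $\{g_t\}$ of $(M,\alpha)$ is generated by an honest time-dependent contact Hamiltonian $H_t=\alpha(\dot g_t\circ g_t^{-1})\in\cc_c(M)$, so $\con$ coincides with the group generated by contact Hamiltonian isotopies and there is \emph{no} cohomological (flux-type) obstruction. This is why one can aim at perfectness of $\con$ itself rather than at perfectness of a proper subgroup.

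\emph{Step 1 (Fragmentation).} Let $f\in\con$ and pick a compactly supported contact isotopy $\{f_t\}$ from $\id$ to $f$, with contact Hamiltonian $H_t$. Cover the compact set $\supp f$ by finitely many Darboux balls $U_1,\dots,U_N$, i.e.\ open sets on which $(M,\alpha)$ is contactomorphic to an open piece of the model $(\R^m,\as)$, $\as=dz-\sum_i y_i\,dx_i$. A partition of unity subordinate to the cover splits $H_t=\sum_j H_t^{(j)}$ with $\supp H_t^{(j)}\subset U_j$; integrating and composing the corresponding contact flows (and reparametrising in $t$) one writes $f=g_1\cdots g_N$ with $g_j\in\cont_c(U_j,\alpha)_0$. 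Thus it suffices to prove the local claim: every $g\in\cont_c(U,\alpha)_0$ with $U$ a Darboux ball is a product of commutators in $\con$.

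\emph{Step 2 (Local case by displacement).} Pass to the model $(\R^m,\as)$ and, after a further contactomorphic conjugation, assume $g$ is supported in a small ball $B$ with $0\notin\overline B$; since $g$ is isotopic to $\id$ in $\cont_c(B,\as)_0$ we may also assume $g$ is $C^1$-close to $\id$. Two features of standard contact $\R^m$ are available: translations in the $x_i$- and $z$-directions are strict contactomorphisms, and the dilations $\delta_s(x,y,z)=(sx,sy,s^2z)$ satisfy $\delta_s^{*}\as=s^2\as$, hence are (non-compactly supported) contactomorphisms, contracting for $s<1$. Using a composition $\phi$ of such maps one arranges that the balls $\phi^k(B)$, $k\ge1$, are pairwise disjoint with relatively compact union $\Omega$. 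The infinite product $G:=\prod_{k\ge1}\phi^k g\phi^{-k}$ then has mutually disjoint supports, $\supp G\subset\Omega$, and the telescoping identity $\phi^{-1}G\phi=g\,G$ yields $g=[\phi^{-1},G]$. Finally $\phi$ may be replaced by a compactly supported contactomorphism agreeing with it on a relatively compact set containing $\Omega\cup B\cup\phi(\Omega\cup B)$ (cut off the contact Hamiltonian of an isotopy realising $\phi$), so that $g$, and hence by Step 1 every $f\in\con$, is exhibited as a product of commutators in $\con$.

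\emph{Where the difficulty lies.} The crux is making Step 2 genuinely work, and this is precisely where the contact case is harder than the symplectic one. Any family $\phi^k(B)$ that is both disjoint and has relatively compact union must accumulate, and conjugating by high iterates of a contraction multiplies the higher-order derivatives of the perturbation $\phi^k g\phi^{-k}-\id$ by unbounded factors, so the infinite product $G$ need not be $C^\infty$ (indeed not even $C^1$) at the accumulation set. The technical heart of the proof is therefore to organise the displacing dynamics on standard contact $\R^m$ so that the conjugates stay in a bounded family and $G$ is smooth --- in effect, establishing that $(\R^m,\as)$ is ``portable'': a fixed Darboux ball can be shrunk into itself by contactomorphisms and simultaneously displaced. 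Once this smoothness issue is resolved the local lemma follows, and combining it with the fragmentation of Step 1 completes the proof that $\con$ is perfect.
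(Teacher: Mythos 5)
Your Step 1 is exactly the paper's first reduction (Lemma 5.2), but Step 2 — which you correctly identify as the entire content of the theorem — is not proved, and the gap you flag at the end is not a technicality that "remains to be organised": it is the known, essentially insurmountable obstruction to proving perfectness of $C^\infty$ diffeomorphism groups by an infinite swindle. Any compactly supported (or compactly truncated) $\phi$ for which the sets $\phi^k(B)$ are pairwise disjoint with relatively compact union must contract $B$ at a geometric rate, say by factors $\sim s^k$ with $s<1$. Then for a fixed $g$ with nonvanishing second derivative, $D^r\bigl(\phi^k g\phi^{-k}-\id\bigr)$ grows like $s^{(1-r)k}$ for $r\ge 2$, so the infinite product $G=\prod_k\phi^k g\phi^{-k}$ is not $C^2$ at the accumulation set, no matter how the displacing dynamics is arranged. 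This is precisely why Thurston and Mather needed rolling-up operators on tori and fixed-point machinery rather than a swindle, and the contact case is no easier. Asserting that "once this smoothness issue is resolved the local lemma follows" leaves the theorem unproved; the proposal as written establishes only the (classical) fragmentation reduction.

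The paper's actual argument is entirely different and never forms an infinite product. After fragmenting into $\cont_{I_A}(\R^m,\as)_0$, it builds a continuous operator $\vartheta$ on a convex compact set $\L$ of generating functions (via the chart $\Phi_A$ of Section 4) and applies the Schauder--Tychonoff fixed point theorem, following Epstein's scheme. The ingredients are: a "fragmentation of the second kind" near the identity with uniquely determined factors and controlled norms (Propositions 5.6--5.7); support shifting by the specific contactomorphisms $\sigma_{i,t}$, $\chi_A$, $\eta_A$ (Section 6); and, crucially, a new rolling-up operator $\Psi_A$ built from auxiliary operators on the contact cylinders $\C^m_k$, whose key homological property $[\Psi_A(g)]=[g]$ rests on Lemma 8.6 (a "remainder" contactomorphism on $\C^m_{n+1}$ is a product of commutators, using Proposition 2.2). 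None of this appears in, or is replaced by, the displacement picture. Your structural remark about the absence of a flux obstruction is correct and is indeed the reason the theorem can hold, but it does not supply the missing analytic argument.
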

Epstein in [4] proved that the commutator subgroup of a group of
homeomorphisms satisfying some natural conditions is simple. It is
easily checked that Epstein's conditions are satisfied by $\con$
(section 10). Therefore we have

\begin{cor}
If $M$ is connected then the group $\con$ is simple.
\end{cor}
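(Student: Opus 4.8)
The plan is to deduce the Corollary from Theorem~1.1 together with the classical simplicity criterion of Epstein~[4]. Epstein's theorem asserts that if $G$ is a group of homeomorphisms of a paracompact space $X$ satisfying his three axioms --- there is a basis $\B$ for the topology of $X$ that is invariant under $G$; the action of $G$ on $\B$ is transitive in the weak sense that for any two nonempty $U,V\in\B$ there is $g\in G$ with $g(U)\subseteq V$; and a fragmentation--commutator axiom controlling the supports of elements of $G$ relative to open covers --- then the commutator subgroup $[G,G]$ is simple. Granting that $G=\con$, acting on $X=M$, satisfies these axioms (this is the content of section~10), the conclusion is immediate: by Theorem~1.1 the group $\con$ is perfect, so $\con=[\con,\con]$, and Epstein's theorem gives that $[\con,\con]$ is simple; hence $\con$ itself is simple.

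It remains to indicate how section~10 verifies Epstein's axioms for $\con$. For the basis $\B$ I would take the family of open sets $U\subset M$ carried by some contact chart onto a ball in the standard contact $\R^m$, with $\overline U$ contained in that chart; this family is manifestly preserved by every contactomorphism, which gives the first axiom. For the transitivity axiom I would use connectedness of $M$ --- precisely the hypothesis of the Corollary, and the only place it enters: given two such ``contact balls'', join their centres by a path in $M$ and move one into the other by the time-one map of a compactly supported contact isotopy obtained by integrating a time-dependent contact vector field supported near the path; this map lies in $\con$. The fragmentation--commutator axiom says that every $g\in\con$ is a finite product of elements of $\con$ each supported in a member of $\B$ subordinate to a prescribed open cover, together with the required behaviour of ``spread out'' disjointly supported families under commutators; this is the fragmentation property for compactly supported contactomorphisms, which I would invoke from the body of the paper.

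The only genuine difficulty is that fragmentation step, and it is a difficulty about contactomorphisms rather than about the formal deduction, which is a single line once the pieces are in place. Unlike the volume-preserving and symplectic situations, one cannot simply multiply a contact vector field by a bump function and stay in the contact category; the remedy is to pass from the isotopy to its contact Hamiltonian, cut that function off with a bump function, and re-integrate, tracking the conformal factor $\lambda_f$ throughout. I would therefore arrange section~10 so that, once this fragmentation statement is available, checking Epstein's precise list of axioms for $\con$ is routine bookkeeping, and then cite Epstein~[4] to conclude that $\con=[\con,\con]$ is simple whenever $M$ is connected.
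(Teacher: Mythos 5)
Your proposal is correct and follows essentially the same route as the paper: verify Epstein's axioms for $\con$ acting on $M$ (invariant basis, transitivity, and the fragmentation/support axiom via Lemma 5.2), conclude from Epstein [4] that $[\con,\con]$ is simple, and combine with Theorem 1.1 to get simplicity of $\con$. The only cosmetic difference is the choice of basis --- the paper takes the $G$-orbit $\U=\{g(U):g\in G\}$ of a single ball (so invariance and transitivity are automatic and connectedness is used, together with $\chi_A$, $\tau_{i,t}$, $\sigma_{i,t}$, to show $\U$ is a basis), whereas you take all contact balls and use connectedness for transitivity instead.
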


The contactomorphism group is a classical group of
diffeomorphisms. Since the well-known results of Herman [8],
Thurston [17] and Mather [12] on the simplicity of
$\diff^r_c(M)_0$, $r=1\ld \infty$, $r\neq \dim(M)+1$,  the problem
of the perfectness (or of computing the first homology group) of
groups of diffeomorphisms have been studied in several papers.
First of all such studies have been done on the classical groups
of diffeomorphisms.

An essential feature of the geometry and topology of manifolds
with a volume or a symplectic form is the existence  of
invariants, called the flux homomorphisms. According to the
celebrated results by Thurston [16] and Banyaga [1] (see also [2])
the first homology groups $H_1(\diff_c(M,\omega)_0)$ , where
$\omega$ is a volume or a symplectic form, can be expressed by
means of the flux homomorphism and other invariants, and they
depend also on the compactness of the underlying manifold. Notice
that the results and the methods of their proofs in both cases are
similar. In general, the compactly supported identity components
of the volume preserving diffeomorphism group and the
symplectomorphism group are not perfect. Note that Banyaga's
theorem was generalized to the locally conformal symplectic
structures in [7].

 A basic reason that $\con$ is perfect is the fact that in
the contact case there do not exist invariants analogous to the
flux homomorphism and, consequently, a fragmentation property
holds in its usual form. In view of this fact Theorem 1.1 was
conjectured, e.g. in [2]. A main obstacle to find a proof similar
to that of Thurston [16] is the lack of a canonical contact
structure on the torus $T^m$, a clue ingredient of a hypothetical
proof by this method. Canonical contact structures do exist,
however, on the cylinders $\C^m_k=(\mathbb S^1)^k\t\R^{m-k}$,
$k=1\ld n+1$, and this fact is essential in our proof.

The fragmentation property (Lemma 5.2) is, in fact, an
indispensable ingredient of the proof. Nevertheless, it is
probably not a sufficient tool to prove Theorem 1.1. My idea is to
use in the proof also a fragmentation of contactomorphisms in a
neighborhood of the identity of $\corr$ (section 5). I call it a
fragmentation of the second kind. An essential advantage of such
fragmentations is that the factors of the resulting decomposition
are uniquely determined by the initial contactomorphism. Moreover,
the norms of these factors are controlled by the norm of the
initial contactomorphism in a convenient way.

The proof  consists in an application of Schauder-Tichonoff's
fixed point theorem to some operator in a functional space. The
origins of this method were explained in Epstein  [5], where it
was used  to give an alternative proof of the perfectness of
$\diff^{\infty}_c(M)_0$. We would like to stress, however,  that
 several parts of the
proof for diffeomorphisms cannot be carried over  to the contact
case and some new ideas and technical refinements in the proof of
Theorem 1.1 are indispensable. Our construction of a fixed point
operator consists
 of ten steps (c.f. section 9) and functional spaces on various domains must be considered in it.

A crucial  step in the proof is the use of a rolling-up operator
$\Psi_A$ defined in section 8 (Proposition 8.7). Such  operators
are used in [12] and [5], and  analogous operators exist for the
group $\corr$, but only with respect to the first $n+1$ variables.
However, they are useless since the property
\[\forall f\in\dom(\Psi_A),\quad
[\Psi_A(f)]=[f]\quad\hbox{in}\quad H_1(\corr),\]
 a clue part of the proof in [12], does not hold in the
contact category for very basic reasons. In this situation we
construct a new rolling-up operator $\Psi_A$ for $\corr$ by means
of auxiliary operators acting on contactomorphisms on the
subsequent contact cylinders $\C^m_k$, $k=1\ld n$. An essential
fact is that a "remainder" contactomorphism living on the last
cylinder $\C^m_{n+1}$ possesses a representant in the commutator
subgroup of $\corr$ (Lemma 8.6). This ensures the above property
for $\Psi_A$. Such an argument is no longer true for
$\diff^r_c(\R^m)_0$ and, consequently, the proof of Theorem 1.1
cannot be carried over to the case of diffeomorphisms.

The contact topology and geometry are intensively studied
nowadays, c.f. [6]. Theorem 1.1, which is a contact analog of the
theorems of Thurston and Banyaga, could be possibly applicable in
various ways. In the last section we indicate two directions of
such applications. The most important seems to be the fact that
due to Theorem 1.1 the commutator length is a conjugation
invariant norm on $\con$.  For the significance of Banyaga's
theorem in  the symplectic topology, see, e.g., [14] and [3].

In the appendix it is  observed that the universal covering group
of $\con$ is also perfect by an  argument similar to that for
$\con$.

\bigskip\wyr{Acknowledgments}. I would like to thank very much the
referee for pointing out some mistakes and misprints. These
remarks helped me to correct  the paper.

\section{The group of contactomorphisms}

Let $M$ be a smooth manifold with $\dim(M)=m=2n+1$ and let
$\alpha$ be a contact form on $M$. A contact form $\alpha$ can be
put into the following normal form. For any $p\in M$ there is a
chart $(x_0,x_1\ld x_n,y_1\ld y_n):M\supset U\r u(U)\subset\rr$,
centered at $p$, such that $\a|_U=\dd x_0-y_{1}\dd x_1-\ldots
-y_{n}\dd x_n$.

 The symbol $ \X(M,\a)$ will stand for
the Lie algebra of all contact vector fields, i.e. $X\in \X(M,\a)$
iff $L_X\a =\mu_X \a $ for some function $\mu_X\in\cm$, where $L$
is the Lie derivative. Let $ \X_c(M,\a)$ be the Lie subalgebra of
compactly supported elements of $ \X(M,\a)$.

Let $h\in \con$ and $\{ h_t\}_{t\in I}$ be a smooth isotopy such
that $h_1=h, h_0=\id$ and each $h_t$ stabilizes outside a fixed
compact $K\subset M$. Of course, such a smooth contact isotopy
determines a smooth family of contact vector fields
$X_t\in\X_c(M,\a)$, namely for $p\in M$ we have
\begin{equation}
         {\p h_t\over \p t}(p)=X_t(h_t(p)).
\end{equation}
In fact, one has $L_{X_t}\a =\mu _{X_t}\a $ with $\mu _{X_t} =(\p
\ln\lambda _{h_t}/\p t)h_t^{-1}$ where $h_t^*\a =\lambda_{h_t} \a
$.

Let $X_{\a}$ denote the unique vector field satisfying
$i_{X_{\a}}\a =1$ and $i_{X_{\a}}\dd\a = 0$. $X_{\a}$ is called
the Reeb vector field. A  vector field $X$ is called \wyr
{horizontal} if $i_X\a =0$. A dual concept is a \wyr {semibasic}
form, i.e. any 1-form $\gamma $ such that $\gamma (X_{\a})=0,$ and
the duality is established by the isomorphism $d\a :X\longmapsto
i_X\dd\a $. It follows the isomorphism of vector bundles
\begin{equation}
I_{\a}:TM\ni X\mapsto i_X\dd\a+\a(X)\a\in T^*M.
\end{equation}

As a consequence we have the existence of the isomorphism
$\I_{\a}$ below (c.f. Libermann [10]), an important tool in the
contact geometry.

 \begin{prop}  There is an isomorphism $\I_{ \a} :
\X(M,\a) \rightarrow \CC^{\infty }(M)$ by $\I_{\a}(X)=i_X\a $. For
$H\in \CC^{\infty }(M)$ we have \[\I_{ \a}^{-1}(H)=HX_{\a}+(d\a
)^{-1}((i_{X_{\a}}\dd H)\a -\dd H).\]
\end{prop}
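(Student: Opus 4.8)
The plan is to verify that the map $\I_\a\colon \X(M,\a)\to\CC^\infty(M)$, $X\mapsto i_X\a=\a(X)$, is a linear isomorphism by exhibiting its inverse explicitly, namely the formula
\[
\I_\a^{-1}(H)=HX_\a+(\dd\a)^{-1}\bigl((i_{X_\a}\dd H)\,\a-\dd H\bigr),
\]
where $(\dd\a)^{-1}$ denotes the inverse of the bundle isomorphism from horizontal vector fields onto semibasic $1$-forms given by $X\mapsto i_X\dd\a$ (this is well defined because, by the very definition of the Reeb field $X_\a$, the form $i_X\dd\a$ is semibasic, and $\dd\a$ is nondegenerate on $\H=\ker\a$). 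First I would check that the right-hand side is a contact vector field: writing $Y:=\I_\a^{-1}(H)$, one computes $i_Y\a=H$ since $i_{X_\a}\a=1$ and the second summand is horizontal; then using Cartan's formula $L_Y\a=i_Y\dd\a+\dd(i_Y\a)=i_Y\dd\a+\dd H$. From $i_Y\dd\a=H\,i_{X_\a}\dd\a+(i_{X_\a}\dd H)\a-\dd H=(i_{X_\a}\dd H)\a-\dd H$ (using $i_{X_\a}\dd\a=0$), we get $L_Y\a=(i_{X_\a}\dd H)\,\a$, so $Y\in\X(M,\a)$ with $\mu_Y=i_{X_\a}\dd H=X_\a(H)$.

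Next I would check the two composite identities. That $\I_\a\ci\I_\a^{-1}=\id$ is exactly the computation $i_Y\a=H$ above. For $\I_\a^{-1}\ci\I_\a=\id$, let $X\in\X(M,\a)$, set $H:=\a(X)$, and decompose $X=H X_\a+X^{\mathrm h}$ with $X^{\mathrm h}$ horizontal; it suffices to show $(\dd\a)^{-1}\bigl((i_{X_\a}\dd H)\a-\dd H\bigr)=X^{\mathrm h}$, i.e. $i_{X^{\mathrm h}}\dd\a=(i_{X_\a}\dd H)\a-\dd H$ as semibasic forms. From $L_X\a=\mu_X\a$ and Cartan's formula, $i_X\dd\a=\mu_X\a-\dd H$, hence $i_{X^{\mathrm h}}\dd\a=i_X\dd\a-H\,i_{X_\a}\dd\a=\mu_X\a-\dd H$; pairing with $X_\a$ gives $0=\mu_X-i_{X_\a}\dd H$, so $\mu_X=i_{X_\a}\dd H$, and the identity follows. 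Linearity of $\I_\a$ is immediate, and since it admits a two-sided inverse it is an isomorphism of vector spaces (indeed of $\CC^\infty(M)$-modules after the evident twist).

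The only genuinely delicate point is the well-definedness and smoothness of the operator $(\dd\a)^{-1}$, i.e. that $X\mapsto i_X\dd\a$ really is a bundle isomorphism from the horizontal subbundle $\H=\ker\a$ onto the bundle of semibasic forms; this is the content of the duality established via $I_\a$ in equation (2.2) of the excerpt, so I would simply invoke that, restricting $I_\a$ to $\H$. Everything else is a routine application of $i_{X_\a}\a=1$, $i_{X_\a}\dd\a=0$, and Cartan's magic formula $L_X=i_X\ci\dd+\dd\ci i_X$ acting on the $1$-form $\a$.
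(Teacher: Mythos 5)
Your proof is correct. The paper itself offers no proof of Proposition 2.1, citing it to Libermann, so there is no in-paper argument to compare against; what you give is the standard verification, and all the key steps check out: the form $(i_{X_\a}\dd H)\a-\dd H$ is indeed semibasic (pair with $X_\a$), so $(\dd\a)^{-1}$ applies and yields a horizontal field; Cartan's formula then gives $L_Y\a=(i_{X_\a}\dd H)\a$, so $Y\in\X(M,\a)$; and for the other composite, decomposing $X=HX_\a+X^{\mathrm h}$, Cartan plus contraction with $X_\a$ forces $\mu_X=i_{X_\a}\dd H$, which closes the identity. One small caution on your closing parenthetical: $\X(M,\a)$ is not a $\CC^\infty(M)$-module under the pointwise action (for $f\in\CC^\infty(M)$ and $X$ contact, $fX$ is generally not contact), so the ``evident twist'' only means transporting the module structure through $\I_\a$, which is a tautology; the content of the proposition is the $\R$-linear isomorphism, and your argument establishes exactly that.
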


We will deal with  the standard contact form $\as=\dd
\x0-\sum_{i=1}^n y_{i}\dd x_i$ on $\rr$. Then we have
$X_{\as}=\dz$ and $\dd\as=\sum_{i=1}^n\dd x_i\wedge \dd y_{i}$.

Notice that the isomorphism $I_{\as}:T\R^m\r T^*\R^m$ is
independent of the variables $x_i$, $i=0\ld n$. Likewise, the
isomorphism $\I_{\as}:\frak X(\R^m,\as)\r\CC^{\infty}(\R^m)$ sends
vector fields independent of $x_i$ to functions independent of
$x_i$ and vice versa.

Observe that $\H=\ker(\as)$ is generated by $Y_i=\dyi$ and
$X_i=\dxi+y_{i}\dz$, where $i=1\ld n$.

 Next it is easily seen that $d\as(Y_i)=-\dd
x_i$ and $d\as(X_i)=\dd y_{i}$. Every contact vector field
$X=u_0\dz+\sum_{i=1}^n u_i\dxi+u_{n+i}\dyi\in\X(\rr,\as)$ is
identified by $\I_{\as}$ with the function
$H=i_X\as=u_0-\sum_{i=1}^n y_{i}u_i\in\crr$. Conversely, in view
of Proposition 2.1 and the above equalities, we have
\begin{equation}
 X_H=\left(H-\sum_{i=1}^n y_{i}{\pp H\over \pp y_{i}}\right)\dz -\sum_{i=1}^n
   {\partial H\over \partial y_{i}}\dxi
  +\sum_{i=1}^n\left({\partial H\over \partial x_i}+y_{i}{\partial H\over \partial
  x_0}\right)\dyi,
\end{equation}
where $X_H=\I_{\as}^{-1}(H)$ for all $H\in\crr$. Now we wish to
specify some elements in $\corr$. The following contact vector
fields on $\rr$ and their flows will be of use. Throughout we will
often write $x$ instead of $(x_1\ld x_n)$ and $y$ instead of
$(y_{1}\ld y_{n})$.

\begin{enumerate}
  \item Let $H_0$ be the constant function 1. Then $X_{H_0}=\dx0=X_{\as}$ and its flow takes the
  translation form
  $\fl^{H_0}_t(x_0,x,y)=(x_0+t,x,y)$.
  \item Put $H_i(\x0,x,y)=-y_{i}$ ($i=1\ld n$). Then $X_{H_i}=\dxi$ and its
  flow consists of the translations
  $\fl^{H_i}_t(\x0,x,y)=(\x0,x+t{\bf 1}_i,y)$.
  \item  Let
  $H_{n+i}(\x0,x,y)=x_i$ ($i=1\ld n$). From (2.3) we obtain
  $  X_{H_{n+i}}=\dyi+x_i\dz.$
  Hence $\fl^{H_{n+i}}_t(\x0,x,y)=(\x0+tx_i,x,y+t{\bf 1}_i)$.
  \item For $H(\x0,x,y)=2\x0-\sum_{i=1}^n x_iy_{i}$, we have
  $X_H=2\x0\dz+\sum_{i=1}^n(x_i\dxi+y_{i}\dyi)$, and its flow assumes the form
$\fl^{H}_t(\x0,x,y)=(e^{2t}\x0,e^tx,e^ty)$. By
$\chi_{a}:\rr\r\rr$, where $\chi_{a}(\x0,x,y)=(a^2\x0,a x,a y)$,
i.e. $\chi_a=\fl^H_{\ln a}$, we will denote the resulting contact
homothety. \item Let $\bar H(x,y,z)=\x0-\sum_{i=1}^n x_iy_i$. Then
  $X_{\bar H}=\sum_{i=0}^n x_i\dxi$, and its flow satisfies
$\fl^{\bar H}_t(\x0,x,y)=(e^t\x0,e^tx,y)$. Let
$\eta_{a}:\rr\r\rr$, where $\eta_{a}(\x0,x,y)=(a\x0,a x, y)$, i.e.
$\eta_a=\fl^{\bar H}_{\ln a}$, denote   the resulting map.

\end{enumerate}

Denote $\tau_{i,t}=\fl^{H_i}_t$, $i=0,1\ld n$, and
$\sigma_{i,t}=\fl^{H_{n+i}}_t$, $i=1\ld n$.   The supports of
$\tau_{i,t}$, $\sigma_{i,t}$, $\chi_a$ and $\eta_a$ are not
compact . But if we take the product of $H_i$, $H$, or $\bar H$
with a suitable bump function we will obtain elements of $\corr$
which are equal to the previous contactomorphisms on a
sufficiently large interval.Abusing the notation we will denote
all these elements of $\corr$ by the same letters as before. This
ambiguity will not matter in the proof  and we will not mention it
in the sequel.

Observe that the translations along the $y_i$ axes are not
contactomorphisms since they do not preserve the contact
distribution.

\begin{prop}\begin{enumerate}
\item A diffeomorphism $f$ of $\R^m$ is a contactomorphism if and
only if for $f=(f_0,f_1\ld f_{2n})$ we have
\begin{align*}
{\p f_0\over \p x_0}-\sum_{j=1}^nf_{n+j}{\p f_j\over \p
x_0}&=\lambda_f,\\
{\p f_0\over \p x_i}-\sum_{j=1}^nf_{n+j}{\p f_j\over \p
x_i}&=-y_i\lambda_f,\quad i=1\ld n,\\
{\p f_0\over \p y_i}-\sum_{j=1}^nf_{n+j}{\p f_j\over \p
y_i}&=0,\quad i=1\ld n.
\end{align*}
\item If $f\in\cont(\R^m,\as)$ is independent of $x_0$ (i.e. ${\p
(f-\id)\over\p x_0}=0$) then $\lambda_f=1$. If, in addition, $f$
is independent of $x_i$, $i=1\ld n$, then $f_{n+j}(x_0,x,y)=y_j$
for $j=1\ld n$.
\end{enumerate}
 \end{prop}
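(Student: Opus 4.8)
The plan is to read off both parts of the proposition directly from the defining relation $f^*\as=\lambda_f\as$, written out in the standard coordinates $(x_0,x_1\ld x_n,y_1\ld y_n)$. For $f=(f_0,f_1\ld f_{2n})$ one has $f^*(\dd x_i)=\dd f_i$ and $f^*(y_i)=f_{n+i}$, so that
\[
f^*\as=\dd f_0-\sum_{j=1}^{n}f_{n+j}\,\dd f_j .
\]
Expanding each differential as $\dd f_k=\frac{\p f_k}{\p x_0}\dd x_0+\sum_{i=1}^{n}\frac{\p f_k}{\p x_i}\dd x_i+\sum_{i=1}^{n}\frac{\p f_k}{\p y_i}\dd y_i$ and collecting terms, the coefficients of $\dd x_0$, of $\dd x_i$ and of $\dd y_i$ in $f^*\as$ are respectively
\[
\frac{\p f_0}{\p x_0}-\sum_{j=1}^{n}f_{n+j}\frac{\p f_j}{\p x_0},\qquad \frac{\p f_0}{\p x_i}-\sum_{j=1}^{n}f_{n+j}\frac{\p f_j}{\p x_i},\qquad \frac{\p f_0}{\p y_i}-\sum_{j=1}^{n}f_{n+j}\frac{\p f_j}{\p y_i}.
\]
Since $\lambda_f\as=\lambda_f\,\dd x_0-\sum_{i=1}^{n}\lambda_f\, y_i\,\dd x_i$, equating coefficients gives precisely the three displayed families of equations. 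Thus a contactomorphism satisfies them with the given $\lambda_f$; conversely, a diffeomorphism satisfying them (with $\lambda_f$ read off as the left-hand side of the first equation) satisfies $f^*\as=\lambda_f\as$, and it remains only to see that $\lambda_f$ is nowhere zero.

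That last point is automatic. From $f^*\as=\lambda_f\as$ we get $f^*\dd\as=\dd\lambda_f\wedge\as+\lambda_f\,\dd\as$, and since $\as\wedge\as=0$ every term of $(f^*\dd\as)^{n}$ that retains a factor $\as$ dies after wedging on the left with $f^*\as=\lambda_f\as$, leaving
\[
f^*\!\big(\as\wedge(\dd\as)^{n}\big)=(f^*\as)\wedge(f^*\dd\as)^{n}=\lambda_f^{\,n+1}\,\as\wedge(\dd\as)^{n}.
\]
As $f$ is a diffeomorphism the left-hand side is a volume form, so $\lambda_f$ vanishes nowhere and $f^*\as$ is again a contact form; this completes (1).

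For (2), "$f$ independent of $x_0$" means $\p(f-\id)/\p x_0=0$, i.e.\ $\p f_0/\p x_0=1$ and $\p f_j/\p x_0=0$ for $j=1\ld 2n$. Substituting this into the first equation of (1) immediately yields $\lambda_f=1$. If moreover $f$ is independent of each $x_i$, $i=1\ld n$, then for those $i$ we have $\p f_0/\p x_i=0$ and $\p f_j/\p x_i=\delta_{ij}$ for $j=1\ld n$; plugging this and $\lambda_f=1$ into the $i$-th equation of the second family of (1), the sum $\sum_{j=1}^{n}f_{n+j}\,\p f_j/\p x_i$ collapses to $f_{n+i}$, so the equation reads $-f_{n+i}=-y_i$, that is $f_{n+j}(x_0,x,y)=y_j$ for $j=1\ld n$.

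I do not anticipate any genuine obstacle: the whole argument is a direct coordinate computation from the definition of a contactomorphism. The only place asking for a moment's care is the automatic non-vanishing of $\lambda_f$ in the converse of part (1), which is handled by the volume-form identity above.
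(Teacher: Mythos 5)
Your proof is correct and follows essentially the same route as the paper's: part (1) is the coordinate expansion of $f^*\as=\lambda_f\as$ obtained by equating coefficients of $\dd x_0$, $\dd x_i$, $\dd y_i$, and part (2) is obtained by substituting the independence hypotheses into the first $n+1$ equations. The paper states this very tersely; your additional verification that $\lambda_f$ is automatically nowhere vanishing (via $f^*(\as\wedge(\dd\as)^n)=\lambda_f^{n+1}\as\wedge(\dd\as)^n$) is a correct detail that the paper leaves implicit.
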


 \begin{proof}
(1) This is the equality $f^*\as=\lambda_f\as$ written in
coordinates.

(2) It follows immediately from the first $n+1$ equalities.
 \end{proof}

A crucial idea in the proof of Theorem 1.1 is to consider groups
of contactomorphisms on cylinders which admit standard contact
structures. Let us denote
\begin{equation}
 \C^m_k:=(\mathbb
S^1)^k\t\R^{m-k},\end{equation} and for a constant $A>0$
\begin{equation} E_A^{(k)}:=(\mathbb S^1)^k\t[-A,A]^{m-k},\quad
k=1\ld n+1.
\end{equation}
It will be often convenient to write
\begin{equation}
\C^m_0:=\R^m,\quad E^{(0)}_A=E_A:=[-A,A]^m.
\end{equation}
 The coordinates of $\C^m_k$ will be written
$(\xi_0,\xi_1\ld\xi_n,y_1\ld y_n)$, where $\xi_i$ is the natural
coordinate on $\mathbb S^1$ for $i=0\ld k-1$, and $\xi_i=x_i$ for
$i=k\ld n$. For short, we will often write $\xi$ instead of
$(\xi_1\ld \xi_n)$, that is the natural coordinates on $\C^m_k$
will be denoted by $(\xi_0,\xi,y)$. On the cylinder $\C^m_k$ we
have the standard contact form given by $\as=\dd \xi_0-y_{1}\dd
\xi_1-\ldots -y_{n}\dd \xi_n$.

If $E\s\C^m_k$ the symbol $\cont_E(\C^m_k,\as)$ stand for the
totality of elements of  $\cont_c(\C_k^{m},\as)_0$ with support
included in $E$.
 The description of a chart in $\cork$ at the
identity will be given in section 4.

Observe that the translations $\tau_{i,t}$ still live on $\C^m_k$
whenever $k\leq i\leq n$.

\section{Basic estimates}

Let  $r$ be a nonnegative integer. For $g\in \cc(\rz^m,\rz^{m'})$
we define
\begin{equation*}
  \|D^rg\|=\sup_{p\in\rz^m}|D^rg(p)|=\sup_{p\in\rz^m}\sup_{|u_1|\leq 1\ld|u_r|\leq 1}
  |D^rg(p)(u_1\ld u_r)|\leq\infty,
\end{equation*}
where $D^0g=g$. Next, for a diffeomorphism
$f\in\diff^{\infty}(\R^m)$ we put for any $r\geq 0$
\[\mu_r(f)=\|D^r(f-\id)\|,\]

\begin{equation*}
  M_r(f)=\max\{\mu_0(f),\mu_1(f),\ldots,\mu_r(f)\}.
\end{equation*}
If $\f=(f_1\ld f_k)$ then we define \[\mu_r(\f)=\sup_{i=1\ld
k}\mu_r(f_i),\quad M_r(\f)=\sup_{i=1\ld k}M_r(f_i).\]

We have $\mu_1(f)\leq\|Df\|+1$, $\|Df\|\leq\mu_1(f)+1$ and
$\mu_r(f)=\|D^rf\|$ for $r\geq2$. Let $E\s\R^m$ be a closed set.
We define
\begin{equation}
R_E=\sup_{p\in E}\dist(p,\overline{\R^m\setminus E})\leq\infty.
\end{equation}
\begin{prop} Let $R_E<\infty$.
If $f$ is a diffeomorphism and $\supp(f)\s E$,    then
\begin{equation*}
  \mu_{r}(f)\leq C\mu_{r+1}(f),
  \end{equation*}
where $r\geq 0$ and the constant $C$ depends on $R_E$.
\end{prop}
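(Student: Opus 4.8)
The plan is to reduce the estimate $\mu_r(f)\le C\mu_{r+1}(f)$ to a one–dimensional integration argument, exploiting the hypothesis $\supp(f)\subset E$ together with $R_E<\infty$. Set $g=f-\id$, so that $g$ is $C^\infty$, compactly supported in $E$, and $D^rg=D^r f$ for $r\ge 2$ (and the passage between $\mu_r$ and norms of $Df$ for $r=0,1$ is handled by the inequalities already recorded just before the proposition). Fix $p\in E$. Since $R_E<\infty$ there is a point $q\in\overline{\R^m\setminus E}$ with $|p-q|\le R_E+1$, say; because $g$ vanishes on a neighbourhood of $q$ (indeed $g\equiv 0$ off $E$, hence $D^rg(q)=0$ for every $r$), we may write, along the segment from $q$ to $p$,
\[
D^rg(p)=D^rg(q)+\int_0^1 \frac{d}{dt}\,D^rg\bigl(q+t(p-q)\bigr)\,dt
 =\int_0^1 D^{r+1}g\bigl(q+t(p-q)\bigr)(p-q,\cdot,\ldots,\cdot)\,dt .
\]
Taking the supremum over unit vectors in the remaining $r$ slots and bounding $|p-q|\le R_E+1$ gives $|D^rg(p)|\le (R_E+1)\,\|D^{r+1}g\|$, and taking the supremum over $p\in E$ (equivalently over $p\in\R^m$, since $g$ vanishes off $E$) yields $\mu_r(f)\le (R_E+1)\,\mu_{r+1}(f)$ for $r\ge 2$, i.e. $C=R_E+1$ works in that range.

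The remaining cases $r=0$ and $r=1$ are then obtained by combining this with the elementary comparisons already in the text. For $r=1$ the same segment argument applied directly to $g$ (rather than to $D^rg$) gives $\mu_0(f)=\|g\|\le (R_E+1)\|Dg\|=(R_E+1)\|D(f-\id)\|$, and since $\|D(f-\id)\|\le\mu_1(f)+\ldots$; more cleanly, $\mu_1(f)=\|D(f-\id)\|$ up to the additive adjustments noted, so one gets $\mu_0(f)\le C'\mu_1(f)$ with $C'$ depending only on $R_E$. For $r=1$ one integrates $Dg$ against the segment to get $\|Dg(p)\|\le (R_E+1)\|D^2g\|$, hence $\mu_1(f)\le C''\mu_2(f)$ after absorbing the $\mu_1$-versus-$\|Df\|$ discrepancy (which only shifts constants). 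Collecting the three cases, one may take a single constant $C=C(R_E)$ valid for all $r\ge 0$.

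The one genuine point requiring care — the ``main obstacle'', such as it is — is the selection of the auxiliary point $q$: one must be sure that for every $p\in E$ there is a point at distance at most (roughly) $R_E$ from $p$ at which all derivatives of $g$ vanish, and that the straight segment joining them can be used. The definition $R_E=\sup_{p\in E}\dist(p,\overline{\R^m\setminus E})$ supplies exactly such a $q$ in the closure of the complement, and continuity of $D^rg$ together with $g\equiv 0$ on $\R^m\setminus E$ forces $D^rg(q)=0$; the segment itself need not lie in $E$, but that is irrelevant since $g$ is globally $C^\infty$ on $\R^m$. One should also remark that the estimate is vacuous unless $\mu_{r+1}(f)<\infty$, i.e. it is really a statement comparing the two quantities whenever the right-hand side is finite, which is the only case used later. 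No compactness of $\supp(f)$ beyond $\supp(f)\subset E$ and $R_E<\infty$ is needed.
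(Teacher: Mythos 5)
Your argument is correct and is exactly the integration-along-a-segment argument the paper itself invokes (its proof is the one-line remark that the inequality follows by integrating partial derivatives of $f-\id$). Note only that since $\mu_r(f)=\|D^r(f-\id)\|$ for every $r\ge 0$ by definition, your case distinction for $r=0,1$ is unnecessary: the same integration of $D^{r+1}(f-\id)$ from the point $q$ gives $\mu_r(f)\le (R_E+1)\mu_{r+1}(f)$ uniformly in $r$, with no adjustment of constants.
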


In fact, the  inequality is obtained by integrating  partial
derivatives of the map $f-\id$.

Let $f,g\in\dnk $ and $r\geq 1$. Then we have
\begin{equation}
  D(f\circ g)=  (Df\circ g) Dg,
\end{equation}

\begin{equation}
\begin{split}
  D^r(f\circ g)&=  (D^rf\circ g)(Dg\times\ldots\times Dg)
  +(Df\circ g) D^r g\\
  & + \sum C_{i;j_1,\ldots ,j_i}(D^i f\circ g)
  (D^{j_1}g\times\ldots\times D^{j_i}g).
\end{split}
\end{equation}
It follows from (3.2) and (3.3) the equalities
\begin{equation}
D(f^{-1})=(Df)^{-1}\circ f^{-1},
\end{equation}
\begin{equation}\label{eq:derFormInv}
\begin{aligned}
  D^r(f^{-1})&=D(f^{-1})(D^{r}f\circ f^{-1})(D(f^{-1})\times
  \ldots\times D(f^{-1})) \\
  &+D(f^{-1})\sum C_{i;j_1,\ldots,j_i}(D^{i}f\circ
  f^{-1})(D^{j_1}(f^{-1})\times\ldots\times
  D^{j_i}(f^{-1})).
\end{aligned}
\end{equation}

 In (3.3) and (3.5) the sum is taken over $1<i<r$, $1\leq j_s$, $j_1+\ldots+j_i=r$
and $C_{i;j_1,\ldots,j_r}$ are positive integers. Note that in
each term of the above sum there exists $j_s>1$.

\begin{dff}By polynomials we will understand polynomials with nonnegative
coefficients.

An \wyr{admissible} polynomial is a polynomial  without constant
and linear terms. Admissible polynomials will be denoted  by $F$
with some indices. We will also consider polynomials  without
constant term. Such polynomials will be designated by $P$ with
some indices.
\end{dff}
\begin{conv}
In order to avoid repeating that either  polynomials,  or
constants  depend on some values, we adopt the following
convention. If, e.g., a polynomial $P$  depends on $\psi$, $r$,
and $A$, then we will write $P_{\psi,r,A}$, i.e. all the values
determining a given object will appear as subscripts. The only
exception is that  we will not mention explicitly the dependence
on $m=\dim(M)$.

 In the sequel we will often omit the sign of composition
$\ci$.

\end{conv}

By using (3.2)-(3.5) and the induction argument we have the
following lemma (c.f. [12]).
\begin{lem}\label{lem:estimations}
\begin{enumerate}
  \item For any $f_1,\ldots,f_k\in\dnk$ and $\f=(f_1\ld f_k)$
  \begin{equation*}
    \mu_1(f_1\circ\ldots\circ f_k)\leq k\mu_1(\f)(1+\mu_1(\f))^{k-1}.
  \end{equation*}
  \item For $r,k\geq2$ there exists an
  admissible polynomial $F_{r,k}$  such that for any $f_1,\ldots,f_k\in\dnk$, $\f=(f_1\ld
  f_k)$,
  one has
  \begin{equation*}
    \mu_r(f_1\circ\ldots\circ f_k)\leq
    k\mu_r(\f)(1+\mu_1(\f))^{r(k-1)}+F_{r,k}(M_{r-1}(\f)).
  \end{equation*}
  \item If $f\in\diff(\rz^m)$ with $\mu_1(f)<1$,  then
 \begin{equation*}
    \mu_1(f^{-1})\leq \frac{\mu_1(f)}{1-\mu_1(f)}.
  \end{equation*}
  \item   For any $r\geq2$ there exists
  an admissible polynomial $F_r$
  such that for any   $f\in\diff(\rz^m)$ with $\mu_1(f)<{1\over 2}$
\begin{equation*}
    \mu_r(f^{-1})\leq\mu_r(f)(1+2\mu_1(f))^{r+1}+F_r(M_{r-1}(f)).
  \end{equation*}
\end{enumerate}
\end{lem}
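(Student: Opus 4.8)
The plan is to prove the four inequalities of Lemma~\ref{lem:estimations} by induction on $r$, using the Leibniz-type formulas (3.2)--(3.5) together with the elementary triangle-inequality bound $|D^r(f\circ g)(p)(u_1,\ldots,u_r)|\le\sum\|D^if\|\,\|D^{j_1}g\|\cdots\|D^{j_i}g\|$ that one reads off from those identities. Throughout I would freely use that $\mu_0(f)\le\mu_1(f)$ on supports need not be invoked here, but that $\|D^0(f-\id)\|,\ldots,\|D^{r-1}(f-\id)\|$ are all dominated by $M_{r-1}(f)$, and that $\|Dg\|\le\mu_1(g)+1$; these turn products of $\|D^{j}\cdot\|$ with $j\ge2$ into monomials in $M_{r-1}$, which is exactly how the admissible remainder polynomials $F_{r,k}$ and $F_r$ get manufactured (every term in the sums of (3.3) and (3.5) contains at least one factor $D^{j_s}$ with $j_s>1$, hence contains a factor $\mu_{j_s}(\cdot)\le M_{r-1}(\cdot)$, so after discarding the constant-$1$ parts of the remaining first-derivative factors one is left with a monomial of degree $\ge2$ in $M_{r-1}$).

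For part (1), apply $D(f_1\circ\cdots\circ f_k-\id)=\sum_{j=1}^k(Df_1\circ\cdots\circ f_j)\cdots$ telescoping, or more simply induct: write $f_1\circ\cdots\circ f_k=f_1\circ g$ with $g=f_2\circ\cdots\circ f_k$, use $D(f_1\circ g-\id)=(Df_1\circ g-\id)Dg+(Dg-\id)$, so $\mu_1(f_1\circ g)\le\mu_1(f_1)(1+\mu_1(g))+\mu_1(g)$, and feed in the inductive bound $\mu_1(g)\le(k-1)\mu_1(\f)(1+\mu_1(\f))^{k-2}$; a short computation gives the stated $k\mu_1(\f)(1+\mu_1(\f))^{k-1}$. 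For part (3), from $D(f^{-1})-\id=-(Df)^{-1}\circ f^{-1}\,(Df-\id)\circ f^{-1}$ and the Neumann bound $\|(Df)^{-1}\|\le(1-\mu_1(f))^{-1}$ when $\mu_1(f)<1$ one gets $\mu_1(f^{-1})\le\mu_1(f)/(1-\mu_1(f))$ directly. Parts (2) and (4) are the higher-order versions: for (2), again split off $f_1$, apply (3.3) to $f_1\circ g$, bound the leading term $(D^rf_1\circ g)(Dg\times\cdots\times Dg)$ by $\mu_r(f_1)(1+\mu_1(g))^r$, bound $(Df_1\circ g)D^rg$ by $(1+\mu_1(f_1))\mu_r(g)$, absorb the middle sum into an admissible polynomial in $M_{r-1}(f_1,g)\le$ (admissible polynomial in) $M_{r-1}(\f)$ using part (1) to control $\mu_1(g)$, and close the induction on $k$. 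For (4) one does the analogous thing with (3.5), using part (3) to bound $\|D(f^{-1})\|\le1+\mu_1(f^{-1})\le1+2\mu_1(f)$ when $\mu_1(f)<\tfrac12$, and using the already-established parts (2)/(3) at order $r-1$ to replace the $D^{j}(f^{-1})$ factors appearing in the remainder by an admissible polynomial in $M_{r-1}(f)$.

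The one genuinely delicate bookkeeping point — the step I expect to be the main obstacle — is making the induction in parts (2) and (4) self-consistent, i.e. verifying that the remainder terms really do collapse to \emph{admissible} polynomials (degree $\ge2$, no constant or linear part) in $M_{r-1}$ and not merely to polynomials with a stray linear or constant term. The subtlety is that in the sum in (3.3) a term like $(D^2f_1\circ g)(Dg\times D^{r-1}g\times\cdots)$ contains a factor $Dg=\id+(Dg-\id)$ whose "$\id$" part must not be allowed to lower the total degree below $2$; one has to check that every surviving monomial still carries at least two honest $\mu_{\ge?}$-factors, which works precisely because $i\ge2$ forces at least one $D^if_1$ factor (degree $\ge1$ in $M_{r-1}(f_1)$, since $i\ge2$ means $\mu_i(f_1)=\|D^if_1\|$ is itself of order $M_{r-1}$ — wait, here $\mu_i$ for $2\le i\le r-1$ is $\le M_{r-1}$, good) together with at least one $D^{j_s}g$ with $j_s\ge2$ (another degree-$\ge1$ factor), giving total degree $\ge2$. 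Once this is checked for $r$ assuming the statements for $r-1$, and the same check is made for the inverse formula (3.5), the induction runs and the lemma follows; the remaining manipulations are the routine estimates indicated above and I would not write them out in full.
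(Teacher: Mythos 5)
Your approach is exactly the one the paper intends: the paper gives no proof of Lemma 3.4 beyond the remark ``by using (3.2)--(3.5) and the induction argument (c.f.\ [12])'', and your induction on $k$ and $r$ via the Fa\`a di Bruno--type formulas, with the remainder terms collapsed into admissible polynomials because $i\ge 2$ supplies a factor $\|D^if_1\|\le M_{r-1}$ and some $j_s>1$ supplies a second factor bounded by a polynomial without constant term in $M_{r-1}$, is the standard argument and is essentially sound; your discussion of the ``delicate bookkeeping point'' is precisely the right check. One link in part (2) does need repair, though: you bound the leading term by $\mu_r(f_1)\bigl(1+\mu_1(g)\bigr)^r$ and then invoke part (1) to control $\mu_1(g)$, but $1+(k-1)a(1+a)^{k-2}$ strictly exceeds $(1+a)^{k-1}$ for $k\ge 3$ (e.g.\ $1+2a+2a^2$ versus $1+2a+a^2$ when $k=3$, $a=\mu_1(\f)$), so this route gives a leading coefficient larger than $(1+\mu_1(\f))^{r(k-1)}$, and the excess carries the full factor $\mu_r(f_1)$ and therefore cannot be absorbed into $F_{r,k}(M_{r-1}(\f))$. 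The fix is to bound $\|Dg\|$ directly by sub-multiplicativity, $\|Dg\|\le\prod_{i=2}^k\|Df_i\|\le(1+\mu_1(\f))^{k-1}$, rather than by $1+\mu_1(g)$; with that substitution (and noting that in part (4) the factors $D^{j_s}(f^{-1})$ with $2\le j_s\le r-1$ are controlled by part (4) itself at lower order, available in your induction on $r$, rather than by parts (2)--(3)), your proof closes and coincides with the intended one.
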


In the group $\corr$ we will need  more specified norms. For any
$f\in \cont_c(\rr,\as)$ and $r\geq 0$ we put
\[\mu^{\l}_r(f)=\max\{\|D^r(f-\id)\|,\|D^r(\lambda_f-1)\|\}\]
 and
\begin{equation*}
  M^{\l}_r(f)=\max\{\mu^{\l}_0(f),\mu^{\l}_1(f),\ldots,\mu^{\l}_r(f)\}.
\end{equation*}
Here $\lambda_f\in\cc(\rr)$ such that $f^*\as=\lambda_f\as$. We
define $\mu^{\l}_r(\f)$ and $M^{\l}_r(\f)$ for $\f=(f_1\ld f_k)$
analogously as above.
\begin{prop} Let $R_E<\infty$.
If  $f\in \cont_c(\rr,\as)$, and $\supp(f)\s E$, then
\begin{equation*}
  \mu^{\l}_{r}(f)\leq C\mu^{\l}_{r+1}(f),
  \end{equation*}
 where $r\geq 0$ and $C$ depends on $R_E$.
\end{prop}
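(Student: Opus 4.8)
The statement to prove is Proposition 3.8, which asserts that for $f\in\cont_c(\R^m,\as)$ with $\supp(f)\subset E$ and $R_E<\infty$, one has $\mu^{\l}_r(f)\le C\mu^{\l}_{r+1}(f)$ with $C$ depending only on $R_E$ (and $m$).

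The plan is to reduce to Proposition 3.3, which already gives the analogous estimate $\mu_r(g)\le C\mu_{r+1}(g)$ for an ordinary diffeomorphism $g$ supported in $E$, obtained by integrating partial derivatives of $g-\id$ along segments reaching the boundary of $E$. The norm $\mu^{\l}_r(f)=\max\{\|D^r(f-\id)\|,\|D^r(\lambda_f-1)\|\}$ has two parts, so the proof splits accordingly.

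First I would handle the term $\|D^r(f-\id)\|$. Since $\supp(f)\subset E$ and $R_E<\infty$, Proposition 3.3 applies directly with $g=f$: for each $r\ge 0$, $\mu_r(f)=\|D^r(f-\id)\|\le C_1\mu_{r+1}(f)\le C_1\mu^{\l}_{r+1}(f)$, where $C_1$ depends only on $R_E$. The point is that $f-\id$ vanishes on a set that every point of $E$ can reach within distance $R_E$, so each component of $D^r(f-\id)(p)$ is obtained by integrating a component of $D^{r+1}(f-\id)$ over a segment of length at most $R_E$, giving the factor $C_1=R_E$ (times a combinatorial constant depending on $m$). Second, for the term $\|D^r(\lambda_f-1)\|$: since $f$ is compactly supported in $E$ and equal to the identity off $\supp(f)$, we have $f^*\as=\as$ off $\supp(f)$, hence $\lambda_f\equiv 1$ there; in particular $\lambda_f-1$ is compactly supported in $E$. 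So $\lambda_f-1$ is a scalar function vanishing outside $E$, and the same integration-along-segments argument as in Proposition 3.3 — now applied to a single real-valued function rather than to $f-\id$ — yields $\|D^r(\lambda_f-1)\|\le C_2\|D^{r+1}(\lambda_f-1)\|\le C_2\mu^{\l}_{r+1}(f)$ with $C_2$ depending only on $R_E$ and $m$.

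Combining the two bounds and taking $C=\max\{C_1,C_2\}$ gives $\mu^{\l}_r(f)=\max\{\|D^r(f-\id)\|,\|D^r(\lambda_f-1)\|\}\le C\mu^{\l}_{r+1}(f)$, as required. I do not expect any real obstacle here: the only thing to be careful about is to note explicitly that $\lambda_f-1$ has support inside $\supp(f)\subset E$ (so that the mean-value/integration estimate applies to it), and that the constant produced by the segment-integration argument genuinely depends only on $R_E$ and the dimension $m$, not on $f$. This is the same mechanism already invoked for Proposition 3.3, so one can simply say the proof is obtained, exactly as for Proposition 3.3, by integrating the partial derivatives of $f-\id$ and of $\lambda_f-1$ along segments of length $\le R_E$ joining a point of $E$ to the region where these quantities vanish.
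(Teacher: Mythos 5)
Your proposal is correct and follows exactly the paper's argument: the paper proves this by integrating the partial derivatives of $f-\id$ and of $\lambda_f-1$ along segments reaching the complement of $E$, just as in the diffeomorphism case, and your observation that $\lambda_f\equiv 1$ off $\supp(f)$ is the only point that needs to be made explicit.
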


Indeed, the  inequality follows from  the definition of
$\mu_r^{\l}$ by integrating  partial derivatives of the maps
$f-\id$ and $\lambda_f-1$.

\begin{lem}\label{lem:estimationss}
\begin{enumerate}
  \item For any $f_1,\ldots,f_k\in
\cont_c(\rr,\as)$ and $\f=(f_1\ld f_k)$ we have
  \begin{equation*}
    \mu_1^{\l}(f_1\circ\ldots\circ f_k)\leq k\mu_1^{\l}(\f)\big((1+\mu^{\l}_0(\f))
    (1+\mu^{\l}_1(\f))\big)^{k-1}.
  \end{equation*}
  \item  For $r,k\geq2$ there exists an
  admissible polynomial $F_{r,k}$  such that for any $f_1,\ldots,f_k\in
\cont_c(\rr,\as)$, $\f=(f_1\ld f_k)$, one has
  \begin{equation*}
    \mu_r^{\l}(f_1\circ\ldots\circ f_k)\leq
    k\mu_r^{\l}(\f)(1+\mu_0^{\l}(\f))^{k-1}
    (1+\mu_1^{\l}(\f))^{r(k-1)}
    +F_{r,k}(M^{\l}_{r-1}(\f)).
  \end{equation*}
  \item If $f\in\cont_c(\rr,\as)$ with $\mu^{\l}_0(f)< \frac{1}{2}$ and $\mu_1^{\l}(f)<\frac{1}{2}$,  then
 \begin{equation*}
    \mu_1^{\l}(f^{-1})\leq 8\mu_1^{\l}(f).
  \end{equation*}
  \item For any  $r\geq2$ there exists an admissible polynomial $F_r$ such that for any
  $f\in\cont_c(\rr,\as)$ with $\mu^{\l}_0(f)< \frac{1}{2}$ and $\mu_1^{\l}(f)<{1\over 2}$
  one has
  \begin{equation*}
    \mu_r^{\l}(f^{-1})\leq 2^{r+2}\mu_r^{\l}(f)(1+2\mu_1^{\l}(f))^{r+1}+F_r(M_{r-1}^{\l}(f)).
  \end{equation*}
\end{enumerate}
\end{lem}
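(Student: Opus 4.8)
The plan is to deduce Lemma 3.9 (the $\mu^{\l}$-estimates for compositions and inverses) from the already-established Lemma 3.6 (the plain $\mu_r$-estimates), by observing that the contact case amounts to controlling the \emph{pair} $(f-\id,\lambda_f-1)$ simultaneously. The key structural fact is the cocycle identity $\lambda_{f\ci g}=(\lambda_f\ci g)\,\lambda_g$, which follows from $(f\ci g)^*\as=g^*(f^*\as)=g^*(\lambda_f\as)=(\lambda_f\ci g)\,g^*\as=(\lambda_f\ci g)\lambda_g\,\as$; and for inverses $\lambda_{f^{-1}}=(\lambda_f^{-1})\ci f^{-1}=1/(\lambda_f\ci f^{-1})$. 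So each assertion of Lemma 3.9 is obtained by running the corresponding assertion of Lemma 3.6 twice: once for the diffeomorphism part $f-\id$ (verbatim), and once for the multiplicative-cocycle part $\lambda_f-1$, where one writes $\lambda_{f\ci g}-1=(\lambda_f\ci g-1)\lambda_g+(\lambda_g-1)$ and applies the chain rule (3.3) together with the Leibniz rule for the product.

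First I would record the two identities above and note that differentiating $\lambda_{f\ci g}-1=(\lambda_f\ci g-1)\lambda_g+(\lambda_g-1)$ by (3.3) produces, for the top-order term, $(D^r\lambda_f\ci g)(Dg)^{\times r}\cdot\lambda_g$ plus lower-order terms that are polynomial in $M^{\l}_{r-1}$ of the factors; here the extra factor $\lambda_g=1+(\lambda_g-1)$ contributes the harmless $(1+\mu^{\l}_0)$-type factors that distinguish the statement of Lemma 3.9 from Lemma 3.6. For part (1), at $r=1$, one has $D(\lambda_{f\ci g}-1)=(D\lambda_f\ci g)Dg\,\lambda_g+(\lambda_f\ci g-1)D\lambda_g+D(\lambda_g-1)$, and then an induction on $k$ identical in shape to the proof of Lemma 3.6(1), but carrying the $\lambda$-factors, yields the stated bound with the product $\big((1+\mu^{\l}_0)(1+\mu^{\l}_1)\big)^{k-1}$. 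Part (2) is the same induction at order $r\geq 2$, again mirroring Lemma 3.6(2) and absorbing all non-top-order contributions into an admissible polynomial $F_{r,k}(M^{\l}_{r-1}(\f))$.

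For the inverse estimates (3) and (4), I would apply Lemma 3.6(3)--(4) to the map $f$ itself to control $\mu_r(f^{-1})$, and separately estimate $\lambda_{f^{-1}}-1=\dfrac{1-(\lambda_f\ci f^{-1})}{\lambda_f\ci f^{-1}}$. Under the hypothesis $\mu^{\l}_0(f)<1/2$ we have $1/2<\lambda_f<3/2$, hence $1/2<\lambda_f\ci f^{-1}<3/2$ as well, so the denominator is bounded below by $1/2$; then $|\lambda_{f^{-1}}-1|\leq 2|\lambda_f\ci f^{-1}-1|=2|\lambda_f-1|\ci f^{-1}\leq 2\mu^{\l}_0(f)$, and for the higher derivatives one differentiates the quotient, using that $1/(\lambda_f\ci f^{-1})$ and its derivatives are controlled (Faà di Bruno plus the geometric-series expansion of $1/t$ around $t\in(1/2,3/2)$) by a polynomial in the $\mu^{\l}_j$, $j\leq r$, with the top-order term carrying the factor $D^r\lambda_f\ci f^{-1}$ times powers of $D(f^{-1})$; invoking Lemma 3.6(4) for $D(f^{-1})$ and matching constants gives the claimed $2^{r+2}$ and $8$. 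The constants are deliberately generous, so no delicate optimization is needed.

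The main obstacle is purely bookkeeping: one must verify that the derivative of the quotient $1/(\lambda_f\ci f^{-1})$ genuinely separates into a single top-order term $c\cdot(D^r\lambda_f\ci f^{-1})(D(f^{-1}))^{\times r}$ with controllable constant, plus a remainder that is an \emph{admissible} polynomial in $M^{\l}_{r-1}(f)$ — i.e., that every lower-order term really is at least quadratic and constant-free in the small quantities. This is exactly the same phenomenon already used in Lemma 3.6, where (3.3) and (3.5) have the property, noted just after (3.5), that in each summand of the error some $j_s>1$; combining this with the Leibniz rule for products and the $1/t$ expansion preserves admissibility. Once this is checked, parts (1)--(4) follow by the inductions already carried out for Lemma 3.6, and I would simply say "by the same argument as in Lemma 3.6, carrying along the multiplicative cocycle $\lambda$" rather than rewriting the inductions in full.
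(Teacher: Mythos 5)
Your proposal is correct and follows essentially the same route as the paper: both rest on the cocycle identities $\lambda_{f_1\ci\cdots\ci f_k}=(\lambda_{f_1}\ci f_2\ci\cdots\ci f_k)\cdots\lambda_{f_k}$ and $\lambda_{f^{-1}}=1/(\lambda_f\ci f^{-1})$, combined with the plain estimates of Lemma 3.4 for the $f-\id$ part and the lower bound $\lambda_f>\frac12$ for the inverse. The only cosmetic difference is that you organize the composition estimate as an induction on $k$ via a two-factor recursion, while the paper expands the $k$-fold product directly; the resulting bookkeeping is identical.
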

\begin{proof}
First notice that
\begin{equation}
\lambda_{f_1\ci\cdots\ci f_k}=(\lambda_{f_1}\ci f_2\ci\ldots\ci
f_k)\cdot(\lambda_{f_2}\ci f_3\ci\cdots\ci
f_k)\cdot\ldots\cdot\lambda_{f_k}
\end{equation}
and
\begin{equation}
\lambda_{f^{-1}}={1\over \lambda_{f}\ci f^{-1}}
\end{equation}
for any $f,f_1\ld f_k\in\cont_c(\rr,\as)$. For $\f=(f_1\ld f_k)$
denote $\lff=\sup_{i=1\ld k}\|\lambda_{f_i}\|$. In order to show
(1) observe that in view of (3.6) and (3.2) we have
 \begin{align*}
   & \|D\lambda_{f_1\circ\ldots\circ f_k}\|\\
&\leq \lff^{k-1} \big(\|D(\lambda_{f_1}\ci f_2\ci\cdots\ci
f_k)\|+\|D(\lambda_{f_2}\ci
f_3\ci\cdots\ci f_k)\|+\cdots +\|D\lambda_{f_k}\|\big)\\
&\leq \lff^{k-1} (\|D\lambda_{f_1}\|\|D f_2\|\cdots\|D f_k\|
+\|D\lambda_{f_2}\|
\|Df_3\|\cdots\|D f_k\|+\cdots +\|D\lambda_{f_k}\|)\\
&\leq
\lff^{k-1}\mu_1^{\l}(\f)\big((1+\mu_1^{\l}(\f))^{k-1}+(1+\mu_1^{\l}(\f))^{k-2}
+\cdots+1\big)\\
 &\leq
k\mu_1^{\l}(\f)(1+\mu_0(\f))^{k-1}
    (1+\mu_1^{\l}(\f))^{k-1}.
\end{align*}
 Here  we used
the inequalities $\|\lambda_{f_i}\|\leq 1+\mu^{\l}_0(f_i)$,
$\|D\lambda_{f_i}\|\leq\mu_1^{\l}(f_i)$ , and $\|Df_i\|\leq
1+\mu^{\l}_1(f_i)$, for $i=1\ld k$. Combining this with Lemma
3.4(1) we obtain (1). (2) follows analogously by (3.3), (3.6) and
Lemma 3.4(2).

Next, (3) follows from the trivial inequality
$\frac{\mu_1(f)}{1-\mu_1(f)}\leq\frac{\mu_1^{\l}(f)}{1-\mu_1^{\l}(f)}$
and
\begin{align*}\|D\lambda_{f^{-1}}\|&\leq\|D(1/\lambda_f)\|\|Df^{-1}\|\leq
4\|D\lambda_f\|(1+\mu_1(f^{-1}))\\
&\leq 4\mu_1^*(f)(1+2\mu_1(f))\leq 8\mu^*_1(f)\end{align*} in view
of (3.7), $\|\lambda_f\|>\frac{1}{2}$ and Lemma 3.4. Finally, in
order to show (4)  observe , in view of (3.7), (3.3) and Lemma
3.4, that
\begin{align*}
&\|D^r\lambda_{f^{-1}}\|\leq\|D^r(1/\lambda_f)\|\|D
f^{-1}\|^r+\|D(1/\lambda_f)\|\|D^r
f^{-1}\|+F^1_r(M^{\l}_{r-1}(f))\\
&\quad \leq 4\|D^r\lambda_f\|\|Df^{-1}\|^r+4\|D\lambda_f\|\|D^rf^{-1}\|+F^2_r(M^{\l}_{r-1}(f))\\
&\quad\leq
2^{r+2}\mu_r^{\l}(f)(1+2\mu_1^{\l}(f))^{r+1}+F_r(M_{r-1}^{\l}(f)),
\end{align*}
as $\|\lambda_f\|>\frac{1}{2}$. Now (4) follows from the above
inequality and Lemma 3.4(4).
\end{proof}

\begin{rem}
 Note that Lemma 3.6 remains true for contactomorphisms
on $\C^m_k$, c.f. (2.4), from a sufficiently small
$C^1$-neighborhood of the identity. The reason is that if we
estimate the norms of these elements at a point then the r.h.s. of
the inequalities in question may be written locally, that is in
$\R^m$.

By a subinterval of $E^{(k)}_A\s\C^m_k$, c.f. (2.5),  we
understand a subset of $\C^m_k$ of the form $(\mathbb S^1)^k\t
E'$, where $E'$ is a subinterval of $[-A,A]^{m-k}$. If we put
\begin{equation}
R_E=\sup_{p\in E}\dist(p,\overline{\C^m_k\setminus E}),
\end{equation}
then Proposition 3.5 still holds for $\cork$ instead of $\corr$.
\end{rem}

\section{Description of a chart}

It is well-known that $\cont(M,\a)$ admits an infinite dimensional
Lie group structure (see Lychagin [11], or the elegant proof in
Kriegl and Michor [9]). In particular, this group is locally
contractible.

Observe that for an arbitrary diffeomorphism $f$ of $M$ endowed
with a contact form $\a$ we may define $\lambda_f\in\cc(M)$ by
\begin{equation}
\lambda_f=i_{X_{\a}}\lambda_f\a=i_{X_{\a}}(f^*\a)=f^*(i_{f_*X_{\a}}\a),
\end{equation}
where $i$ designates the interior product.
 The construction of charts on the group
$\cont(M,\a)$ is based on the fact that a diffeomorphism $f$ is a
contactomorphism if and only if the graph of $(f,\lambda_f)$ ,
  \[\{(p,f(p),\lambda_f(p)):\, p\in M\},\]
  is a Legendrian submanifold of $(\tilde M,\tilde\alpha)$,
  where $\tilde M=M\t M\t\R\setminus 0$, $\tilde\alpha=t\pr_1^*\a-\pr_2^*\a$,
  $\pr_i:M\t M\t\R\setminus 0\r M$, $i=1,2$,
  is the projection onto the $i$-th factor, and $t$ is the coordinate in   $\R\setminus 0$.

\begin{thm} \label{thm:LL} \cite{bib:Lyc77},\cite{bib:KM}
If $L$ is a Legendrian submanifold of a contact manifold $(M,\a)$
then there exist an open neighborhood $U$ of $L$ in $M$, an open
neighborhood $V$ of the zero section $0_L$ in $(T^*L\t\R, \a_0)$,
where $\a_0=\theta_L-\dd t$ and $\theta_L$ is the canonical 1-form
on $T^*L$, and a diffeomorphism $\phi:U\r V$ such that
$\phi|_L=\id_L$ and $\phi^*\a_0=\a$.
\end{thm}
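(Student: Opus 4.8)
The plan is to prove this by a Moser-type deformation argument, reducing the statement to a local normal form near the zero section and then globalizing. First I would address the linear-algebraic germ along $L$: since $L$ is Legendrian, $\dim L = n$ in a $(2n+1)$-dimensional contact manifold, and along $L$ the contact hyperplane field $\H = \ker\a$ contains $TL$. The symplectic form $\dd\a$ restricted to $\H$ identifies $\H/TL$ with $T^*L$ (because $TL$ is Lagrangian in $(\H,\dd\a)$), while the Reeb direction $\R\cdot X_\a$ gives a trivial complement accounting for the $\R$ factor. Hence the normal bundle of $L$ in $M$ is canonically isomorphic to $T^*L\oplus\R$, and the contact form $\a_0=\theta_L-\dd t$ on $T^*L\t\R$ has exactly the same $1$-jet along $0_L$ as (the pullback of) $\a$ under this bundle isomorphism. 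So I would begin by fixing a tubular neighborhood embedding $U_0\ni p\mapsto$ (point of $T^*L\t\R$) that realizes this isomorphism on the level of $1$-jets along $L$; call the two contact forms $\a$ and $\a_0$ on a common neighborhood $U$ of $0_L$, with $\a=\a_0$ and $\dd\a=\dd\a_0$ along $0_L$.

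Next I would run the Moser homotopy. Set $\a_s=(1-s)\a_0+s\a$ for $s\in[0,1]$. Since $\a_s=\a_0$ and $\dd\a_s=\dd\a_0$ along $0_L$, after shrinking $U$ the form $\a_s$ is contact for every $s$ (contact is a $C^1$-open condition, uniform on a compact neighborhood of any point of $L$; one uses paracompactness of $L$ to get a uniform neighborhood). I want a smooth isotopy $\phi_s:U_s\r U$ with $\phi_0=\id$, $\phi_s|_{0_L}=\id$, and $\phi_s^*\a_s=g_s\,\a_0$ for some positive function $g_s$ — the conformal factor is unavoidable in the contact (as opposed to symplectic) category and is exactly why the target in the statement carries $\a_0$ only up to the diffeomorphism, not on the nose with a fixed total space. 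Differentiating $\phi_s^*\a_s=g_s\a_0$ in $s$ and writing $\tfrac{d}{ds}\phi_s = Z_s\ci\phi_s$ leads to the equation
\begin{equation*}
L_{Z_s}\a_s + \dot\a_s = \mu_s\,\a_s
\end{equation*}
for a time-dependent contact vector field $Z_s$ and a function $\mu_s$, where $\dot\a_s=\a-\a_0$. Using Cartan's formula $L_{Z_s}\a_s = i_{Z_s}\dd\a_s + \dd(i_{Z_s}\a_s)$ together with the contact isomorphism $\I_{\a_s}$ of Proposition 2.1, this is solved algebraically for $Z_s$: the Hamiltonian $H_s:=i_{Z_s}\a_s$ is determined by $\dot\a_s$, and because $\dot\a_s=\a-\a_0$ vanishes to first order along $0_L$, so does $H_s$, hence $Z_s$ vanishes along $0_L$. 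Therefore the flow $\phi_s$ fixes $0_L$ pointwise and is defined on a neighborhood of $0_L$ for all $s\in[0,1]$ after one more shrinking.

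The composite $\phi:=\phi_1$ then satisfies $\phi^*\a=\phi_1^*\a_1 = g_1\,\a_0$ on a neighborhood $V$ of $0_L$, and $\phi|_{0_L}=\id$. Finally I would absorb the conformal factor: replacing $\phi$ by its composition with the fibrewise scaling automorphism of $T^*L\t\R$ that rescales the cotangent fibre, or equivalently re-choosing the target coordinate $t$, one arranges $g_1\equiv 1$ on a possibly smaller $V$; alternatively, since the statement only asks for \emph{some} neighborhoods $U$, $V$ and a diffeomorphism with $\phi^*\a_0=\a$, one may simply push $\a_0$ forward and rename, noting $g_1\a_0$ is again of the form "canonical $1$-form minus $\dd t$" in suitable coordinates. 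I expect the main obstacle to be exactly the bookkeeping of the conformal factor together with ensuring all the shrinkings of $U$ can be done \emph{uniformly along the non-compact manifold $L$} — this requires a partition-of-unity / paracompactness argument to patch the local Moser neighborhoods into a genuine open neighborhood of $L$, and one must check the vector fields $Z_s$ glue to a global time-dependent field; the purely local normal-form computation and the Moser trick itself are routine given Proposition 2.1.
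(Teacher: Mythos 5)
The paper does not actually prove Theorem 4.1 --- it is quoted from Lychagin and from Kriegl--Michor --- so your proposal has to stand on its own. Its skeleton is the standard one and is mostly sound: the identification of the normal bundle of $L$ with $T^*L\oplus\R$ (via $TL$ Lagrangian in $(\H,\dd\a)$ and the Reeb direction), the choice of a tubular neighborhood matching the $1$-jets of $\a$ and $\a_0$ along $0_L$, and the Moser homotopy $\a_s=(1-s)\a_0+s\a$ with $Z_s$ vanishing along $0_L$ are all correct, including the paracompactness caveats.

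The genuine gap is the conformal factor. The theorem asserts $\phi^*\a_0=\a$ \emph{on the nose}, but your Moser equation $L_{Z_s}\a_s+\dot\a_s=\mu_s\a_s$ only yields $\phi_1^*\a=g_1\a_0$, and none of your proposed ways of removing $g_1$ works: the fibrewise scaling $(q,p,t)\mapsto(q,cp,ct)$ satisfies $m_c^*\a_0=c\a_0$ only for \emph{constant} $c$ (for a nonconstant $g_1$ the differential produces extra $\dd g_1$ terms); re-choosing $t$ changes $\a_0$ by an exact form, not a conformal factor; and the assertion that $g_1\a_0$ is "again of the form $\theta_L-\dd t$ in suitable coordinates" is precisely the theorem applied to $(V,g_1\a_0)$, so it is circular. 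Moreover your premise that the conformal factor is "unavoidable" in the contact category is false: one can solve $L_{Z_s}\a_s+\dot\a_s=0$ strictly. The missing device is to determine the Hamiltonian $H_s=\a_s(Z_s)$ not algebraically but from the ODE $X_{\a_s}(H_s)=-\dot\a_s(X_{\a_s})$, integrated along the Reeb orbits of $\a_s$ starting from a hypersurface transverse to $X_{\a_s}$ containing $L$; then set $\beta_s=-\dot\a_s+\dd u_s-u_s\a_s$ (with $u_s=-H_s$) and apply the isomorphism $I_{\a_s}$ of (2.2). Since $\dot\a_s$ vanishes to first order along $0_L$, so does $H_s$, and $Z_s$ still vanishes along $0_L$. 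This is exactly the mechanism used in the paper's proof of Proposition 4.2(1), and inserting it would close your argument.
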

Consequently, there is a smooth contactomorphism from a
neighborhood of the
  graph of $(\id_M,1_M)$ onto a neighborhood of zero in the space $J^1(M,\R)$ of 1-jets of
  elements of $\cc(M)$. A Legendrian  submanifold $C^1$-close to
  the graph of $(\id_M,1_M)$ corresponds to the 1-jet of a smooth
  function on $M$ $C^2$-close to zero.

Let $k=0,1\ld n+1$.
 We denote the coordinates in
$\C^m_k\t\rz^{m+1}=(\mathbb S^1)^k\t\rz^{2n-k+1}\t\rz^{2n+1}\t\rz$
by $(\xi_0,\xi,y,\bx_0,\bx,\by,t)$, where we write $\bx=(\bx_1\ld
\bx_n), \by=(\by_1\ld \by_n)$. We identify $\C^m_k\t\rz^{m+1}$
with $T^*L\t\R$, where $L=\C^m_k\t 0\t 0\s\C^m_k\t\R^{m+1}$. Then
for the canonical 1-form $\a_0$ on $T^*L\t\R$ we have
 \begin{equation*}
\a_0=\bx_0\dd \xi_0+\sum_{i=1}^n(\bx_i\dd \xi_i+\by_i\dd y_i)-\dd
t.\end{equation*}

Next,  let $U$ be a small open neighborhood of $L=\C^m_k\t 0\t
0\s\C_k^m\t\rz^m\t\rz$. We have an embedding
$\delta_{\C^m_k}:U\r\widetilde\C^m_k=\C^m_k\t\C^m_k\t\R\setminus
0$ given by
 \begin{equation*}
\delta_{\C^m_k}(\xi_0,\xi,y,\bar x_0,\bar x,\by,t)=
(\xi_0,\xi,y,\xi_0+\bar x_0,\xi+\bar x,y+\by, t+1).
\end{equation*}
Since
\begin{equation*} \widetilde{\as}=t(\dd
\xi_0-\sum_{i=1}^ny_{i}\dd \xi_i)-\dd\bar
x_0+\sum_{i=1}^n\by_{i}\dd\bx_i,
\end{equation*}
we obtain on $U$
\begin{equation*}
\widehat{\ac}:=\delta^*_{\C^m_k}\widetilde{\as}=(t+1)\left(\dd\xi_0
-\sum_{i=1}^ny_{i}\dd \xi_i\right)-\dd(\xi_0+\bar x_0)
+\sum_{i=1}^n(y_i+\by_{i})\dd(\xi_i+\bar x_i).
\end{equation*}
 Then $L$ is a Legendrian submanifold w.r.t. both $\a_0$ and
 $\widehat{\ac}$.

Observe that a diffeomorphism $f$ of $\C_k^m$, $C^1$ and $C^0$
close to the identity, is a contactomorphism iff its graph
\[\Gamma_f(\C_k^m)=\{(p,f(p)-p,\lambda_f(p)-1):p\in\C_k^m\}\]
is a Legendrian submanifold of $(U,\widehat{\as})$.

From now on we will write for $A>0$
\begin{equation}
\tilde E^{(0)}_A=[-A^2,A^2]\t[-A,A]^{2n}\t\R^{m+1},\quad \tilde
E^{(k)}_A=(\mathbb S^1)^k\t [-A,A]^{m-k}\t\R^{m+1},
\end{equation}
where $k=1\ld n+1$. First we consider the case $k=0$. Let
$\phi:U\r V$ be as in Theorem 4.1, where $U\s\R^m\t\R^{m+1}$ is an
open neighborhood of $L$ as above and $\phi^*\a_0=\widehat{\as}$.
 Throughout we set
\begin{equation}
K_{\phi,r}=\sup_{s=0\ld r+1}\max\{\|D^s\phi|_{U\cap \tilde
E^{(0)}_1 }\|,\|D^s(\phi|_{U\cap \tilde E^{(0)}_1})^{-1} \|\}.
\end{equation}
  We have $\forall r\geq 1$,
$K_{\phi,r}<\infty$, as we may assume that $U\cap \tilde
E_1^{(0)}$ is relatively compact.

\begin{prop}  Under the above notation we
have: \begin{enumerate} \item $\phi=\phi_0$ may be chosen so that
it is independent of the variable $x_i$, $i=0,1\ld n$, that is
${\p(\phi-\id)\over \p x_i}=0$.
 \item For any $A>1$ there is a contactomorphism
$\phi_A:U'\r V'$, where $U'$, $V'$ are open neighborhoods  of $L$,
such that $\phi_A|_L=\id_L$, and $\phi_A^*\a_0=\widehat{\as}$.
Moreover, for $r=0,1,\ldots$, one has $K_{\phi,r,A}\leq
A^2K_{\phi,r}$, where \[ K_{\phi,r,A}=\sup_{s=0\ld
r+1}\max\{\|D^s\phi_A|_{U'\cap \tilde
E_A^{(0)}}\|,\|D^s(\phi_A|_{U'\cap \tilde E_A^{(0)}})^{-1}\|\}.\]
\item $\phi_A$ is independent of $x_i$, $i=0,1\ld n$.
\end{enumerate}
\end{prop}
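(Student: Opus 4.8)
The plan is to prove the three parts in order, exploiting the explicit form of the standard contact structure and the normalization built into Theorem~\ref{thm:LL}.

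\textbf{Part (1).} I would first recall that $\widehat{\as}$, as computed above on $U\s\R^m\t\R^{m+1}$, depends on the coordinates $x_i$, $i=0,1\ld n$, only through the differentials $\dd(\xi_i+\bar x_i)$, and in fact the whole expression is invariant under the simultaneous translations $(\xi_i,\bar x_i)\mapsto(\xi_i+c_i,\bar x_i)$ together with $\xi_0\mapsto\xi_0+c_0$, $\bar x_0\mapsto\bar x_0$. Equivalently, as noted after Proposition~2.1, the isomorphism $I_{\as}$ and hence all the contact-geometric data in the $k=0$ case are independent of $x_0\ld x_n$. The construction of $\phi$ in the proof of Theorem~\ref{thm:LL} (the Moser/Weinstein-type deformation argument producing a Legendrian tubular neighborhood) is canonical once one fixes auxiliary data (a metric, or a complement to $TL$); choosing these data $x_i$-invariantly — which is possible because $L=\R^m\t 0\t 0$ and the ambient forms are $x_i$-invariant — yields a $\phi$ with ${\p(\phi-\id)/\p x_i}=0$. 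I would spell this out by running the standard proof and checking at each step that $x_i$-independence is preserved (the interpolating family of forms is $x_i$-independent, so the time-dependent vector field integrating it is too, and so is its flow).

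\textbf{Parts (2) and (3).} The idea is to produce $\phi_A$ from $\phi=\phi_0$ by conjugating with the contact homotheties $\chi_a$ and $\eta_a$ from Section~2, items (4) and (5), suitably lifted to $\C^m_k\t\R^{m+1}$. Concretely, on the cotangent-type factor one uses the natural lift of a contactomorphism $g$ of $\C^m_k$ to a strict contactomorphism of $(T^*L\t\R,\a_0)\cong(\widetilde{\C^m_k},\widehat{\as})$; for the homothety $\chi_a$ (or $\eta_a$ in the cases $k\geq1$ where one must not rescale the circle directions) this lift $\widehat{\chi_a}$ satisfies $\widehat{\chi_a}^*\widehat{\as}=a^{2}\,\widehat{\as}$ up to the conformal factor, and more importantly carries $\tilde E^{(0)}_1$ onto $\tilde E^{(0)}_A$ when $a=A$. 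Then I would set $\phi_A=\widehat{\chi_A}^{-1}\ci\phi\ci\widehat{\chi_A}$ on $U'=\widehat{\chi_A}^{-1}(U)$; since $\phi$ fixes $L$ pointwise and $\widehat{\chi_A}$ preserves $L$, $\phi_A$ fixes $L$, and $\phi_A^*\a_0=\widehat{\chi_A}^*\phi^*\widehat{\chi_A}^{-1*}\a_0=\widehat{\as}$ after the conformal factors cancel (here one uses that $\a_0$ and $\widehat{\as}$ transform the same way under $\widehat{\chi_A}$, which is exactly the content of $\phi^*\a_0=\widehat{\as}$ being a conformal equality when pulled through a homothety). Part (3) is then immediate: $\widehat{\chi_A}$ and $\widehat{\eta_A}$ act on the $x_i$-coordinates by scaling only, so conjugating an $x_i$-independent map by them keeps it $x_i$-independent. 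The norm estimate $K_{\phi,r,A}\leq A^2 K_{\phi,r}$ comes from the chain rule applied to $\phi_A=\widehat{\chi_A}^{-1}\ci\phi\ci\widehat{\chi_A}$: each derivative of $\widehat{\chi_A}$ contributes a factor of at most $A^2$ (the largest scaling factor, on the $x_0$-coordinate), each derivative of $\widehat{\chi_A}^{-1}$ a factor at most $1$ (it contracts), and on $\tilde E^{(0)}_A$ the middle factor $\phi$ is evaluated on $\tilde E^{(0)}_1$, so its derivatives are bounded by $K_{\phi,r}$; since $\widehat{\chi_A}$ is linear, only first derivatives of it occur, giving the single factor $A^2$.

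The main obstacle I anticipate is getting the bookkeeping of conformal factors exactly right so that $\phi_A^*\a_0=\widehat{\as}$ holds on the nose rather than up to a function — one must choose the lift $\widehat{\chi_A}$ so that the conformal factor it introduces on $\a_0$ equals the one it introduces on $\widehat{\as}$, which forces a specific normalization (essentially the same scaling $t+1\mapsto$ affine in $t$ on the $\R$-factor). A secondary subtlety is that for $k\geq1$ one cannot use $\chi_A$ on the circle directions $\xi_0\ld\xi_{k-1}$; there one must rely on the fact that $E^{(k)}_A$ is already compact in those directions, so only the $[-A,A]^{m-k}$ directions need rescaling, i.e. one uses a partial homothety, and one should check it is still a contactomorphism of $\C^m_k$ preserving $\as$ up to a conformal factor — which it is, by the same computation as in item (5) restricted to the non-circle variables. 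Once these normalizations are pinned down, all three parts follow by the routine chain-rule estimates sketched above, so I would not belabor them.
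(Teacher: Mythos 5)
Your treatment of part (1) is sound and is essentially the paper's argument: run the Moser/Weinstein deformation keeping all auxiliary data $x_i$-invariant, using that both $\a_0$ and $\widehat{\as}$ are $x_i$-invariant and $L=\R^m\times0\times0$ is too. The paper spells this out by constructing an $x_i$-invariant vector bundle isomorphism along $L$, an $x_i$-invariant $\psi$ with $\dd\psi|_L=\gamma$, and then an $x_i$-invariant Moser vector field via the isomorphism $I_{\a_t}$; your sketch is compatible with that.

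Parts (2) and (3), however, have a genuine gap. You propose $\phi_A=\widehat{\chi_A}^{-1}\ci\phi\ci\widehat{\chi_A}$ — conjugation by a \emph{single} lifted homothety — and you flag that one must ``choose the lift so that the conformal factor it introduces on $\a_0$ equals the one on $\widehat{\as}$.'' But no such single map exists among diagonal scalings that carry $\tilde E_1^{(0)}$ to $\tilde E_A^{(0)}$: writing $\widehat{\as}=(t+1)(\dd\xi_0-\sum y_i\dd\xi_i)-\dd(\xi_0+\bx_0)+\sum(y_i+\by_i)\dd(\xi_i+\bx_i)$ and $\a_0=\bx_0\dd\xi_0+\sum(\bx_i\dd\xi_i+\by_i\dd y_i)-\dd t$, conformal invariance of $\widehat{\as}$ forces the $t$-coordinate to be \emph{fixed} (so that $t+1$ stays $t+1$), while conformal invariance of $\a_0$ with factor $A^2$ forces $t\mapsto A^2t$. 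These are incompatible. The paper circumvents this by using \emph{two different} linear maps, $\mu_A$ (which fixes $t$, scales $\bx_0$ by $A^2$, and satisfies $\mu_A^*\widehat{\as}=A^2\widehat{\as}$) and $\nu_A$ (which fixes $\bx_0$, scales $t$ by $A^2$, and satisfies $\nu_A^*\a_0=A^2\a_0$), and sets $\phi_A=\nu_A\ci\phi\ci\mu_A^{-1}$. Then $\phi_A^*\a_0=(\mu_A^{-1})^*\phi^*\nu_A^*\a_0=A^2(\mu_A^{-1})^*\widehat{\as}=\widehat{\as}$ exactly.

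There is a second, independent problem with your norm estimate. With $\phi_A=\widehat{\chi_A}^{-1}\ci\phi\ci\widehat{\chi_A}$ (say $\widehat{\chi_A}$ expanding, $\widehat{\chi_A}^{-1}$ contracting), formula~(3.3) and linearity of the outer maps give
\[
D^r\phi_A=D\widehat{\chi_A}^{-1}\cdot\bigl(D^r\phi\ci\widehat{\chi_A}\bigr)\cdot\bigl(D\widehat{\chi_A}\bigr)^{\otimes r},
\]
so the expanding factor appears $r$ times and you get a bound of order $A^{2r}K_{\phi,r}$, not $A^2K_{\phi,r}$. The paper avoids this because the \emph{inner} map $\mu_A^{-1}$ is a contraction ($\|D\mu_A^{-1}\|\leq1$), so the $r$ repeated factors are harmless, while the single outer factor $\|D\nu_A\|\leq A^2$ gives exactly $A^2$; the same works for $\phi_A^{-1}=\mu_A\ci\phi^{-1}\ci\nu_A^{-1}$. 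Once you switch to the two-map conjugation $\nu_A\ci\phi\ci\mu_A^{-1}$, both the conformal identity and the $A^2$-bound fall out, and part (3) is immediate since $\mu_A$, $\nu_A$ scale the $x_i$-coordinates.
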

\begin{proof}
(1) We appeal to section 43.18 in [9]. Observe that the contact
forms  $\a_0$ and $\widehat\as$ are independent of the variables
$x_i$, $i=0\ld n$, and $L$ is a Legendrian submanifold w.r.t. both
of them. By an algebraic argument there is a vector bundle
isomorphism $\gamma:T\R^{2m+1}|_L\r T\R^{2m+1}|_L$ such that
$\gamma^*\a_0=\widehat\as$ and $\gamma$ is independent of $x_i$.
Therefore there exists a diffeomorphism $\psi:U\r V$, where $U$,
$V$ are open neighborhoods of $L$ in $\R^{2m+1}$ such that
$\dd\psi|_L=\gamma$, $\psi|_L=\id_L$, and $\psi$ is independent of
$x_i$. Denote $\a_1=\psi^*\widehat\as$. Then $\a_1$ and the
contact form $\a_t=(1-t)\a_0+t\a_1$ existing on a possibly smaller
$V$ are still independent of $x_i$.

Let $f_0=\id$ and let $f_t$, $t\in\R$, be a smooth curve of
diffeomorphisms in an open neighborhood of $L$ such that $\dd
f_t|_{TL}=\id_{T\R^{2m+1}}|_{TL}$. Let $X_t$ be the corresponding
time dependent vector field, i.e. ${\p f_t\over \p t}=X_t\ci f_t$.
It follows that \begin{align*}{\p\over\p t}f_t^*\a_t&={\p\over\p
t}f_t^*\a_s|_{s=t}+f_s^*{\p\over\p
t}\a_t|_{s=t}=f_t^*L_{X_t}\a_t+f_t^*(\a_1-\a_0)\\
&=f_t^*(i_{X_t}\dd\a_t+\dd i_{X_t}\a_t+\a_1-\a_0).\end{align*}
Therefore the proof consists in a construction of $X_t$ such that
\begin{equation}i_{X_t}\dd\a_t+\dd i_{X_t}\a_t+\a_1-\a_0=0
\end{equation} and such that $X_t$ is independent of $x_i$. Indeed, then
$\phi=f_1^{-1}\ci\psi$ satisfies the claim.

We have $\a_0=\a_1$ along $L$ and $X_{\a_0}={\p\over\p t}$ is not
tangent to $L$.   Therefore $X_{\a_t}=X_{\a_0}$ along $L$ and
$X_{\a_t}$ is not tangent to $L$. Consequently, there exists a
submanifold $N$ of codim 1 in $\R^{2m+1}$ containing $L=\R^m\t 0$
such that $N$ is transversal to the flow $\fl^{X_{\a_t}}$ for all
$t\in[0,1]$. Define a time dependent $\R$-valued function $u_t$ by
\[
u_t(\fl^{X_{\a_t}}_s(p))=\int_0^s(\a_1-\a_0)(X_{\a_t})(\fl^{X_{\a_t}}_{\tau}(p))d\tau\]
for $p\in N$. Hence $u_t$ does not depend on $x_i$ and it
satisfies
\begin{equation*}
\dd u_t(X_{\a_t})=i_{X_{\a_t}}(\a_1-\a_0).
\end{equation*}
 Now for the time dependent 1-form $\beta_t=\a_0-\a_1+\dd
u_t-u_t\a_t$, due to the existence of the isomorphism  $I_{\a_t}$,
see (2.2),  there is a unique time dependent vector field $X_t$,
independent of $x_i$, such that $i_{X_{t}}\dd
\a_t+\a_t(X_t)\a_t=\beta_t$. Since $u_t=0$ on $L$ and $\dd
u_t|_{TL}=0$, $f_t$ is defined in a neighborhood of $L$ in
$\R^{2m+1}$ for all $t\in[0,1]$. It follows that $X_t$ satisfies
(4.4).

 (2) Let $\mu_A$, $\nu_A$ be the diffeomorphisms of $\R^{2m+1}$
 given by  \[
\mu_A(x_0,x,y,\bx_0,\bx,\by,t)=(A^2x_0,Ax,Ay,A^2\bx_0,A\bx,A\by,t),\]
and
\[\nu_A(x_0,x,y,\bx_0,\bx,\by,t)=(A^2x_0,Ax,Ay,\bar x_0,A\bx,A\by,A^2t).\]
We have $\mu_A^*\widehat\as=A^2\widehat\as$ and
$\nu_A^*\a_0=A^2\a_0$. That is, $\mu_A$ and $\nu_A$ are
contactomorphisms w. r. t. $\widehat\as$ and $\a_0$, resp., with
$\lambda_{\mu_A}=\lambda_{\nu_A}=A^2$.
 We have $\mu_A(\tilde E^{(0)}_1)=\tilde E^{(0)}_A$ and $\nu_A(\tilde E^{(0)}_1)=\tilde
 E^{(0)}_A$.
 Put $U'=\mu_A(U)$, $V'=\nu_A(V)$, and
 $\phi_A=\nu_A\ci\phi\ci\mu_A^{-1}$. It follows that
$\phi_A^*\a_0=\widehat\as$.
 Since $\|D^s\mu_A\|=\|D^s\nu_A\|=0$ for $s>1$, it is
 apparent from (3.3) that the inequality $K_{\phi,r,A}\leq
A^2K_{\phi,r}$ holds.

 (3)  It is clear by definition that $\phi_A$ is independent of
 $x_i$ if $\phi$ is so.
\end{proof}

\begin{prop} Consider the contact form $\widehat{\ac}$ in a neighborhood of
$L\s\C^m_k\t\R^{m+1}$, $k=1\ld n+1$. Then we
 have:
 \begin{enumerate}
\item For any $A>1$ there is a contactomorphism
$\phi_{A}=\phi_{A,k}:U'\r V'$, where $U'$, $V'$ are open
neighborhoods of $L$,  such that $\phi_{A}|_L=\id_L$,
$\phi_{A}^*\a_0=\widehat{\ac}$, and for $r=0,1,\ldots$ one has
$K_{\phi,r,A}\leq A^2K_{\phi,r}$, where
\[ K_{\phi,r,A}=\sup_{k=1\ld n+1}\sup_{s=0\ld r+1}\max\{\|D^s\phi_A|_{U'\cap
\tilde E^{(k)}_A}\|,\|D^s(\phi_A|_{U'\cap \tilde
E^{(k)}_A})^{-1}\|\},\] c.f. (4.2), (4.3).
 \item $\phi_{A}$ can be chosen so that it
is independent of the variables
 $\xi_i$, $i=0,1\ld n$.
\end{enumerate}
\end{prop}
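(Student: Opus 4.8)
The plan is to reduce the cylinder case $k=1,\dots,n+1$ to the already-established flat case $k=0$ (Propositions 4.2 and 4.3) by a covering/periodicity argument, and then to recover the scaling estimate by the same dilation trick used in Proposition 4.2(2). First I would observe that $\C^m_k=(\mathbb S^1)^k\t\R^{m-k}$ is covered by $\R^m$ via the quotient map $q:\R^m\r\C^m_k$ identifying $x_i\sim x_i+1$ for $i=0,\dots,k-1$, and that the standard contact form $\as=\dd\xi_0-\sum_i y_i\dd\xi_i$ on $\C^m_k$ pulls back under $q$ to the standard form $\as$ on $\R^m$; likewise $\widehat{\ac}$ on a neighborhood of $L\s\C^m_k\t\R^{m+1}$ pulls back to $\widehat{\as}$ on a neighborhood of $L\s\R^m\t\R^{m+1}$. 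The key point is that, by Proposition 4.2(1), the flat contactomorphism $\phi=\phi_0:U\r V$ with $\phi^*\a_0=\widehat{\as}$ may be taken independent of the variables $x_0,x_1,\dots,x_n$ — in particular, independent of $x_0,\dots,x_{k-1}$, hence $\mathbb Z^k$-periodic in those variables in the trivial sense. Therefore $\phi$ descends to a well-defined contactomorphism $\phi_{k}:U'\r V'$ on the corresponding neighborhoods of $L$ in $\C^m_k\t\R^{m+1}$, still satisfying $\phi_k|_L=\id_L$, $\phi_k^*\a_0=\widehat{\ac}$, and still independent of $\xi_0,\dots,\xi_{k-1}$ (and of $\xi_k,\dots,\xi_n$, which are genuine $\R$-coordinates untouched by the descent). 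This already gives part (2).

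For part (1), the estimate $K_{\phi,r,A}\leq A^2K_{\phi,r}$ is obtained exactly as in Proposition 4.2(2): one introduces the dilations $\mu_A,\nu_A$ of $\C^m_k\t\R^{m+1}$ acting by $A^2$ on $\xi_0,\bx_0$ (where defined) and $t$ (for $\nu_A$), and by $A$ on the remaining translational coordinates $\xi_k,\dots,\xi_n,y,\bx_k,\dots,\bx_n,\by$ — note these dilations only act on the $\R$-directions of $\C^m_k$, so they are well defined on the cylinder, and they scale $\widehat{\ac}$ and $\a_0$ by $A^2$ just as in the flat case, with $\lambda=A^2$. Setting $\phi_{A,k}=\nu_A\ci\phi_k\ci\mu_A^{-1}$, the pullback relation $\phi_{A,k}^*\a_0=\widehat{\ac}$ is immediate, and since $\|D^s\mu_A\|=\|D^s\nu_A\|=0$ for $s\geq 2$ while the first derivatives are bounded by $A^2$, the chain rule (3.3) gives $K_{\phi,r,A}\leq A^2K_{\phi,r}$. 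Taking the supremum over $k=1,\dots,n+1$ in the definition of $K_{\phi,r,A}$ poses no difficulty since the construction is uniform in $k$ (it all comes from the single flat $\phi_0$).

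The one genuine point requiring care — and the step I expect to be the main obstacle — is the descent of the \emph{neighborhood} $U$: a priori $U\s\R^m\t\R^{m+1}$ is merely some open neighborhood of $L=\R^m\t 0\t 0$, and to push it down to $\C^m_k$ one needs $U$ to be invariant under the deck transformations $x_i\mapsto x_i+1$, $i=0,\dots,k-1$. This is handled by first shrinking $U$: since $\widehat{\as}$ and $L$ are invariant under these integer translations and $\phi$ is independent of $x_0,\dots,x_n$, one may replace $U$ by $\bigcap_{v\in\mathbb Z^k}(U+v)$ — equivalently, take $U$ of the product form (a neighborhood in the $x_0,\dots,x_{k-1}$-directions that is all of $\R$, times a neighborhood in the remaining directions) — and then $U$ descends to a neighborhood $U'$ of $L$ in $\C^m_k\t\R^{m+1}$ with $q^{-1}(U')=U$. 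The relative compactness needed for $K_{\phi,r}<\infty$ is retained because $U'\cap\tilde E^{(k)}_A$ has compact closure (the $\mathbb S^1$-factors are compact and the interval factors are bounded). All remaining verifications — that $L$ is Legendrian for both $\a_0$ and $\widehat{\ac}$ on $\C^m_k$, and that $\phi_k$ is a diffeomorphism onto its image — follow at once from the corresponding facts upstairs, since $q$ is a local diffeomorphism.
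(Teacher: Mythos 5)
Your overall strategy is the same as the paper's: Proposition 4.2 provides a flat solution $\phi$ that is independent of $x_0,\ldots,x_n$, hence invariant under the deck transformations of the covering $q:\R^m\t\R^{m+1}\r\C^m_k\t\R^{m+1}$, and therefore descends to the cylinder. Your treatment of part (2) and of the descent of the neighborhood $U$ (replacing $U$ by a translation-invariant product neighborhood) is correct and in fact more careful than the paper, which disposes of the whole proposition in one sentence.

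There is, however, a step in your part (1) that fails as written: the dilations $\mu_A,\nu_A$ cannot be transplanted to $\C^m_k\t\R^{m+1}$. To have $\mu_A^*\widehat{\ac}=A^2\widehat{\ac}$ and $\nu_A^*\a_0=A^2\a_0$ one must scale \emph{all} of $\xi_0,\ldots,\xi_n$ (by $A^2$ resp.\ $A$), and for $k\geq 1$ the coordinates $\xi_0,\ldots,\xi_{k-1}$ are circle-valued, so no such scaling exists; if, as your parenthetical ``where defined'' suggests, you act only on the $\R$-directions, then the pullback of $\widehat{\ac}$ is no longer a constant multiple of $\widehat{\ac}$ (the terms $\dd\xi_i$ for $i<k$ and $i\geq k$ acquire different factors), so $\phi_{A,k}^*\a_0=\widehat{\ac}$ is not ``immediate'' --- it is false for that construction. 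The repair is simply to reverse the order of operations, which is what the paper does: first form $\phi_A=\nu_A\ci\phi\ci\mu_A^{-1}$ upstairs in $\R^{2m+1}$ via Proposition 4.2(2), note by Proposition 4.2(3) that $\phi_A$ is still independent of $x_0,\ldots,x_n$, and \emph{then} descend $\phi_A$ to $\phi_{A,k}$. The bound $K_{\phi,r,A}\leq A^2K_{\phi,r}$ transfers because $\phi_{A,k}$ coincides locally with $\phi_A$ and, by the independence of the first $n+1$ variables, the supremum of $\|D^s\phi_A\|$ over $U'\cap\tilde E^{(k)}_A$ is realized on a slice whose lift lies in $U'\cap\tilde E^{(0)}_A$, where the estimate of Proposition 4.2(2) applies.
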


\begin{proof}
We will apply $\phi_A$ defined in Proposition 4.2. Since it is
independent of $x_i$, it determines uniquely the map $\phi_{A,k}$
which verifies all the requirements.
 \end{proof}

  Following the proof of Theorem 43.19 in
[9] we  can construct a
 chart  at the identity in $\cont_{E}(\C_k^{m},\as)$, where $E$ is a subinterval
of $E_A^{(k)}$, by means of  $\phi_A$ from Propositions 4.2 and
4.3,
 \begin{equation*}
 \Phi_{A}:\cont_{E}(\C_k^{m},\as)\supset\U_1\ni f\mapsto u_f\in\V_2\s
 \CC^{\infty}_{E}(\C_k^m),
 \end{equation*}
 where $\CC^{\infty}_{E}(\C_k^m)$ is the totality of $\R$-valued functions on $\C_k^m$
 compactly supported in $E$.
 Here  $\U_1$ is a $C^1$-neighborhood of the identity in $\cont_{E}(\C_k^{m},\as)$,  $\V_2$
 is a $C^2$-neighborhood of zero in $\CC^{\infty}_{E}(\C_k^m)$, and   $\Phi_{A}(\id)=0_{\C^m_k}$.

\begin{conv}
In the subsequent steps of the proof of Theorem 1.1 the $C^1$
neighborhood $\U_1$ and the $C^2$ neighborhood $\V_2$ will be
possibly shrunk several times and the resulting neighborhoods will
depend on  $r$, $A$, $k$, $\phi$ as above, and a smooth function
$\psi$.

The chart $\Phi_A$ in the proof of Theorem 1.1 will be actually
$\Phi_{A^5}$, so in the sequel we will use in inequalities the
coefficient $\ab$, $\beta$ being a constant, rather than $A^2$,
$A^4$, and so on.
\end{conv}

The construction of $\Phi_A$  is the following. Let $\U_1$ be a
small $C^1$ neighborhood of id in $\cork$. In particular, if
$f\in\U_1$ then $\mu_0^*(f)<\frac{1}{4}$.  For any
$f\in\cont_{E}(\C_k^{m},\as)\cap\U_1$  let
$\Gamma_f=(\id,f-\id,\lambda_f-1):\C^m_k\r\C^m_k\t\rz^{m+1}$ be
the corresponding graph map, that is
 $\Gamma_f(p)=(p,f(p)-p,\lambda_f(p)-1)$ for all $p\in\C^m_k$.
 Then we set
\begin{equation}
\Phi_{A}(f)=
u_f=\pr_3\ci\phi_A\ci\Gamma_f\circ(\pr_1\ci\phi_A\ci\Gamma_f)^{-1},\end{equation}
 where  $\pr_i$ is the projection of
$\C_k^m\t\rz^m\t\rz$ onto the $i$-th factor ($i=1,2,3$), and we
have
\begin{equation}\dd u_f=\pr_2\ci\phi_A\ci\Gamma_f\circ(\pr_1\ci\phi_A\ci\Gamma_f)^{-1},\end{equation}
since $\phi_A\ci\Gamma_f\circ(\pr_1\ci\phi_A\ci\Gamma_f)^{-1}$ is
a section of $\pr_1$ and a Legendre map w.r.t. $\alpha_0$.
Conversely, if $u=u_f\in\V_2$ then
\begin{equation}\Phi_A^{-1}(u)-\id=f-\id=\pr_2\ci\phi_A^{-1}\ci\gd_u\circ(\pr_1\ci\phi_A^{-1}\ci\gd_u)^{-1}
\end{equation}
and
\begin{equation}\lambda_f-1=\pr_3\ci\phi_A^{-1}\ci\gd_u\circ(\pr_1\ci\phi_A^{-1}\ci\gd_u)^{-1}.\end{equation}
Here $\gd_u:\C_k^m\r\C^m_k\t\rz^{m+1}$ is given by
$\gd_u(p)=(p,\dd u(p),u(p))$ for $u\in\CC^{\infty}(\C_k^m)$ and
$p\in\C_k^m$. It is easily seen that $\Phi_A^{-1}$ given by (4.7)
is actually the inverse mapping of $\Phi_A$ given by (4.5).

From now on  for a smooth function $h:\R^{2m+1}\r\R^{2m+1}$ and
$r\geq 1$ we denote by $D^r_{(1)}h$ (resp. $D^r_{(2)}h$) the
totality of partial derivatives of order $r$ w.r.t. the first $m$
variables (resp. the totality of partial derivatives of order $r$
which contain at least one derivative w.r.t. the last $m+1$
variables). Consequently, we can write
\begin{equation}
D^rh=(D^r_{(1)}h, D^r_{(2)}h).
\end{equation}

\begin{lem} Suppose $r\geq 2$ and $k=0,1\ld n+1$ . Under the  notation of Propositions 4.2 and 4.3,
 there are   constants $\beta$ and $C_{\phi,r}$,  and $U'=U_{\phi,r,A}$, an open
neighborhood of $L$, such that for $i=1,2,3$
\[|D^r_{(1)}(\pr_{i}\ci\phi_A)(p)|\leq  \ab\cfr |p_2|\]
 for any $p\in U'\cap \tilde E_A^{(k)}$. Here  we denote $p=(p_1,p_2)\in\C^m_k\t\rz^{m+1}$.
  The same is true if $\phi_A$ is replaced by $\phi_A^{-1}$.
\end{lem}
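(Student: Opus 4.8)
The plan is to use the single structural fact that $\phi_A$ (and likewise $\phi_A^{-1}$) fixes $L$ pointwise, $\phi_A|_L=\id_L$, and to turn the estimate into a first-order Taylor bound in the fibre variable $p_2$.

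First I would record the behaviour on $L$. Since the first $m$ coordinates $p_1$ are precisely coordinates along $L=\C^m_k\t 0\t 0$, restricting a function defined near $L$ to $L$ amounts to setting $p_2=0$, and the operator $D^r_{(1)}$, which differentiates only in the $p_1$-directions, commutes with this restriction:
\[
D^r_{(1)}h(p_1,0)=D^r\bigl(h(\,\cdot\,,0)\bigr)(p_1).
\]
Applying this with $h=\pr_i\ci\phi_A$ and using $\phi_A(p_1,0,0)=(p_1,0,0)$, the map $p_1\mapsto(\pr_1\ci\phi_A)(p_1,0)$ is the identity while $p_1\mapsto(\pr_2\ci\phi_A)(p_1,0)$ and $p_1\mapsto(\pr_3\ci\phi_A)(p_1,0)$ vanish identically. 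Hence for $r\geq 2$ and $i=1,2,3$ one has $D^r_{(1)}(\pr_i\ci\phi_A)\equiv 0$ on $L$; the hypothesis $r\geq 2$ is needed only for $i=1$, since $D^1_{(1)}(\pr_1\ci\phi_A)$ is the identity along $L$.

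Next I would shrink $U'=U_{\phi,r,A}$ to a tube about $L$ that is star-shaped in the fibre directions, say $U'\s\{(p_1,p_2):|p_2|<\delta\}$ with $\delta=\delta(\phi,r,A)$ small enough that $U'$ lies in the domain of $\phi_A$ furnished by Propositions 4.2 and 4.3 and $U'\cap\tilde E^{(k)}_A$ is relatively compact. Then for $p=(p_1,p_2)\in U'\cap\tilde E^{(k)}_A$ the segment $s\mapsto(p_1,sp_2)$, $s\in[0,1]$, stays in $U'\cap\tilde E^{(k)}_A$, so by the previous step
\[
D^r_{(1)}(\pr_i\ci\phi_A)(p_1,p_2)=\int_0^1\frac{\p}{\p s}\,D^r_{(1)}(\pr_i\ci\phi_A)(p_1,sp_2)\,ds .
\]
The $s$-derivative of the integrand is the pairing of $p_2$ with a partial derivative of $\pr_i\ci\phi_A$ of order $r+1$ carrying exactly one fibre-derivative — a component of $D^{r+1}_{(2)}$, hence of $D^{r+1}$ — so it is bounded by $c\,|p_2|\,\|D^{r+1}(\pr_i\ci\phi_A)\|$ on $U'\cap\tilde E^{(k)}_A$ for a constant $c$ depending only on $m$ and $r$. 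Since projection onto a block of coordinates does not increase derivative norms, $\|D^{r+1}(\pr_i\ci\phi_A)\|\leq\|D^{r+1}\phi_A\|\leq K_{\phi,r,A}\leq A^2K_{\phi,r}$ by Propositions 4.2 and 4.3 (the supremum defining $K_{\phi,r,A}$ running over orders $0\ld r+1$). Taking $\beta=2$ and $\cfr:=c\,K_{\phi,r}$ gives $|D^r_{(1)}(\pr_i\ci\phi_A)(p)|\leq\ab\cfr|p_2|$. The assertion for $\phi_A^{-1}$ is proved identically: $\phi_A^{-1}$ also restricts to $\id_L$ on $L$, and by definition $K_{\phi,r,A}$ controls the derivatives of $(\phi_A|_{U'\cap\tilde E^{(k)}_A})^{-1}$ up to order $r+1$, so the same Taylor argument runs on a fibre-star-shaped tube inside $V'$.

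I expect the only point requiring care to be the choice of $U'$: the fundamental-theorem-of-calculus step needs each fibre segment $[(p_1,0),(p_1,p_2)]$ to lie in the domain of $\phi_A$, which is false for an arbitrary neighbourhood of $L$ but is arranged by passing to a fibre-star-shaped tube — legitimate since $U'$ may depend on $\phi$, $r$, $A$. Everything else reduces to the bookkeeping that $D^r_{(1)}$ commutes with restriction to $L$ and that $\p_s$ of $s\mapsto D^r_{(1)}(\,\cdot\,)(p_1,sp_2)$ feeds into $D^{r+1}$, together with the inequality $K_{\phi,r,A}\leq A^2K_{\phi,r}$ already established in Propositions 4.2 and 4.3.
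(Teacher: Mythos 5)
Your proof is correct and follows essentially the same route as the paper: the paper also observes that $D^r_{(1)}(\pr_i\ci\phi_A)$ vanishes on $L$ because $\phi_A|_L=\id_L$, and then obtains the bound from the Lipschitz property in the fibre direction with constant $A^2C_{\phi,r}$ coming from $K_{\phi,r,A}\leq A^2K_{\phi,r}$. Your fundamental-theorem-of-calculus argument along the fibre segment is just the explicit form of that Lipschitz estimate, and your care about choosing $U'$ fibre-star-shaped is a legitimate (and welcome) sharpening of a point the paper leaves implicit.
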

\begin{proof}
Observe that $D^r_{(1)}(\pr_i\ci\phi_A)$ is a locally Lipschitz
map and that in view of Propositions 4.2 and 4.3 the Lipschitz
constant may be written in the form $A^2\cfr $, since
$\phi_A|_L=\id_L$ and $D^r_{(1)}(\pr_i\ci\phi_A)=0$ on $L$ by
definition of $\phi_A$. Consequently,
$D^r_{(1)}(\pr_i\ci\phi_A)|_{U'\cap \tilde E_A^{(k)}}$ is
Lipschitz , where $U'$ is an open neighborhood  of $L$ . The same
is true for $\phi_A^{-1}$. This implies the lemma.
\end{proof}

\begin{prop} Let $E$ be a subinterval of $E_A^{(k)}$.
Under the above notation,  for any $r\geq 2$ there is  a $C^1$
neighborhood $\U_{1}$ of the identity in $\cont_{E}(\C^m_k,\as)$
such that for any $f\in\U_{1}$  one has\begin{enumerate} \item
$\|D^{r+1}u_f\|\leq C_{\phi} \mu^{\l}_r(f)+\ab
P_{\phi,r}(M^{\l}_{r-1}(f))$, \item $\mu^{\l}_r(f)\leq C_{\phi}
\|D^{r+1}u_f\|+\ab P_{\phi,r}(\sup_{i=0\ld
r}\|D^iu_f)\|)$,\end{enumerate} where $P_{\phi,r}$ has no constant
term and $\beta$, $C_{\phi}$ are constants.
\end{prop}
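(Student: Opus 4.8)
The statement relates the $C^{r+1}$-norm of the function $u_f$ representing a contactomorphism in the chart to the contact norm $\mu^{\l}_r(f)$. The natural strategy is to differentiate the defining formulas \pref{eq:derFormInv}-type expressions, namely $\dd u_f=\pr_2\ci\phi_A\ci\Gamma_f\circ(\pr_1\ci\phi_A\ci\Gamma_f)^{-1}$ for (1), and $f-\id=\pr_2\ci\phi_A^{-1}\ci\gd_{u_f}\circ(\pr_1\ci\phi_A^{-1}\ci\gd_{u_f})^{-1}$ together with $\lambda_f-1=\pr_3\ci(\ldots)$ for (2). In both directions one composes $\phi_A$ (or $\phi_A^{-1}$) with a graph map — $\Gamma_f=(\id,f-\id,\lambda_f-1)$ in one case, $\gd_{u_f}=(\id,\dd u_f,u_f)$ in the other — and then post-composes with the inverse of a map that is $C^1$-close to the identity. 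So the whole proof is an exercise in applying the chain rule \pref{eq:derFormInv} and the inverse-function estimate \pref{eq:derFormInv}, i.e. Lemma \ref{lem:estimations}, while keeping careful track of which derivatives are ``expensive'' (order $r$ or $r+1$) and which are absorbed into a polynomial in lower-order norms.

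\textbf{Step 1 (direction (1)).} First I would write $\Gamma_f$ as a map $\C^m_k\to\C^m_k\t\R^{m+1}$ whose derivatives of order $s$ are controlled: $|D^s\Gamma_f|$ is bounded in terms of $\mu^{\l}_s(f)$ for $s\geq 1$ (the identity part contributes only to $D^1$), and crucially the second component $\pr_2\ci\Gamma_f=f-\id$ has size $\mu^{\l}_0(f)$. Then apply the chain rule \pref{eq:derFormInv} to $\phi_A\ci\Gamma_f$: the top-order term is $(D\phi_A\ci\Gamma_f)\,D^{r+1}\Gamma_f$, which produces the $C_\phi\,\mu^{\l}_r(f)$ contribution (here one uses that $\|D\phi_A\|$ contributes a constant, and that $\mu^{\l}_r(f)=\|D^r(f-\id)\|$ for the expensive part of $D^{r+1}\Gamma_f$ after integrating, cf. Proposition 3.5), while the term $(D^{r+1}\phi_A\ci\Gamma_f)(D\Gamma_f)^{\otimes(r+1)}$ is where Lemma 4.4 enters: because $D^{r+1}_{(1)}\phi_A$ vanishes on $L$ and is Lipschitz, $|D^{r+1}_{(1)}(\pr_i\phi_A)(\Gamma_f(p))|\leq \ab C_{\phi,r}|\pr_2\Gamma_f(p)|=\ab C_{\phi,r}\,\mu^{\l}_0(f)$, which is itself dominated by $\ab P_{\phi,r}(M^{\l}_{r-1}(f))$; the remaining mixed terms in \pref{eq:derFormInv} involve only derivatives of $\phi_A$ and $\Gamma_f$ of orders $\leq r$, hence go into the polynomial. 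Finally one post-composes with $(\pr_1\ci\phi_A\ci\Gamma_f)^{-1}$: since $\phi_A|_L=\id$ and $f$ is $C^1$-close to the identity, this map is $C^1$-close to the identity, so Lemma \ref{lem:estimations}(3),(4) (in the sharper form of Lemma 3.6, or just Lemma 3.4 since these are ordinary diffeomorphisms of $\C^m_k$) lets one pass the estimate through with only a constant factor on the top-order term and a polynomial correction. Combining, and using $\|D^{r+1}u_f\|=\|D^r(\dd u_f)\|$, gives (1).

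\textbf{Step 2 (direction (2)).} This is the same computation run backwards, but now one must estimate \emph{both} $\mu_r(f)=\|D^r(f-\id)\|$ and $\|D^r(\lambda_f-1)\|$, i.e. one applies the argument to $\pr_2\ci\phi_A^{-1}\ci\gd_{u_f}\circ(\ldots)^{-1}$ and to $\pr_3\ci\phi_A^{-1}\ci\gd_{u_f}\circ(\ldots)^{-1}$. Here the graph map is $\gd_{u_f}=(\id,\dd u_f,u_f)$, whose order-$s$ derivatives are controlled by $\|D^{s+1}u_f\|$ and $\|D^s u_f\|$, and whose ``second component'' $(\dd u_f,u_f)$ has size $\sup_{i\leq 1}\|D^iu_f\|$; the chain rule \pref{eq:derFormInv} then produces the $C_\phi\|D^{r+1}u_f\|$ main term from $(D\phi_A^{-1}\ci\gd_u)D^r(\dd u_f)$, while Lemma 4.4 applied to $\phi_A^{-1}$ gives $|D^r_{(1)}(\pr_i\phi_A^{-1})(\gd_u(p))|\leq \ab C_{\phi,r}\sup_{i\leq 0}\|D^iu_f\|$, absorbed into $\ab P_{\phi,r}(\sup_{i\leq r}\|D^iu_f\|)$; the post-composition with $(\pr_1\ci\phi_A^{-1}\ci\gd_{u})^{-1}$, again $C^1$-close to the identity, is handled by Lemma 3.4/3.6. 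The only genuine subtlety is bookkeeping: one needs an $a$ $priori$ bound, valid on a possibly smaller $C^1$-neighborhood $\U_1$, guaranteeing that $\gd_{u_f}$ (resp. $\Gamma_f$) stays inside $U'\cap\tilde E^{(k)}_A$ so Lemma 4.4 applies, and that the inverse maps above exist and are $C^1$-small — this is where ``$\U_1$ will be possibly shrunk several times'' (Convention after Proposition 4.3) is used.

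\textbf{Main obstacle.} The essential point — and the only place where contact-specific input (beyond notation) is needed — is controlling the order-$(r+1)$ derivatives of $\phi_A$ when composed with a graph. Naively $D^{r+1}\phi_A$ would contribute a term of size $\ab K_{\phi,r,A}(M^{\l}_1(f))^{\ldots}$, which is \emph{not} a polynomial in $M^{\l}_{r-1}(f)$ alone and would wreck the estimate. The resolution is exactly Lemma 4.4: because $\phi_A|_L=\id_L$ forces $D^{r}_{(1)}(\pr_i\ci\phi_A)$ to vanish on $L$, its value at $\Gamma_f(p)$ (whose distance to $L$ is $\mu^{\l}_0(f)$) is bounded by $\ab C_{\phi,r}\,\mu^{\l}_0(f)$, and $\mu^{\l}_0(f)\leq M^{\l}_{r-1}(f)$ with a polynomial (in fact linear) dependence. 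Getting the combinatorics of \pref{eq:derFormInv} to route \emph{every} high-order derivative of $\phi_A$ either into Lemma 4.4's Lipschitz bound or into a product containing only low-order factors is the part that requires care; everything else is a mechanical application of Lemmas 3.4 and 3.6.
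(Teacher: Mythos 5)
Your overall strategy matches the paper's: you differentiate the chart formulas (4.5)--(4.8), isolate the dangerous high-order derivatives of $\phi_A$ (and $\phi_A^{-1}$) composed with a graph, and handle them by the Lipschitz bound of Lemma 4.5 (which you mis-number as ``Lemma 4.4''; 4.4 is a Convention) plus the composition/inversion estimates of Lemmas 3.4 and 3.6. The identification of Lemma 4.5 as the sole contact-specific input, and the remark that the combinatorics of (3.3) and (3.5) must route every high-order derivative of $\phi_A$ into the Lipschitz bound, are exactly the heart of the paper's argument.

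There is, however, a genuine derivative-count error in Step 1 as written. You apply the chain rule to $\phi_A\ci\Gamma_f$ up through order $r+1$, identify the ``top-order term'' as $(D\phi_A\ci\Gamma_f)\,D^{r+1}\Gamma_f$, and claim this gives the $C_\phi\mu^{\l}_r(f)$ contribution ``after integrating, cf.\ Proposition 3.5.'' But $D^{r+1}\Gamma_f$ carries $D^{r+1}(f-\id)$ and $D^{r+1}(\lambda_f-1)$, i.e.\ $\mu^{\l}_{r+1}(f)$, and Proposition 3.5 asserts $\mu^{\l}_r(f)\leq C\,\mu^{\l}_{r+1}(f)$ --- the \emph{opposite} inequality from what you need. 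There is no way to bound $\mu^{\l}_{r+1}(f)$ by $\mu^{\l}_r(f)$. The correct mechanism, which the paper uses and which you mention only in passing at the end of Step 1, is to write $\|D^{r+1}u_f\|=\|D^r(\dd u_f)\|$ \emph{first} and then differentiate the explicit formula (4.6) for $\dd u_f=\pr_2\ci\phi_A\ci\Gamma_f\circ(\pr_1\ci\phi_A\ci\Gamma_f)^{-1}$ only $r$ times; then the top-order factor in $\Gamma_f$ is $D^r\Gamma_f$, giving $\mu^{\l}_r(f)$ directly, and the remaining terms (including the $(D^s\phi_2\ci\Gamma_f)(D\Gamma_f)^{\t s}$ piece, $2\leq s\leq r$, controlled via Lemma 4.5 and the decomposition (4.9)) land in the admissible polynomial. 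Reorganize Step 1 to start from this identity; as written, the claimed $C_\phi\mu^{\l}_r(f)$ bound on the leading term does not follow. Step 2 has the same issue in mirror image, but since you phrase the $\gd_{u_f}$ estimates in terms of $\|D^{s+1}u_f\|$ it is closer to correct --- just make sure that, when you estimate $\|D^r(f-\id)\|$ via the formula (4.7), you again only ever differentiate the \emph{formula for $D(f-\id)$} $r-1$ times, or the formula for $f-\id$ exactly $r$ times, so that at most $D^{r+1}u_f$ appears.
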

\begin{proof}
Set $\phi_1=\pr_1\ci\phi_A$, $\phi_2=\pr_2\ci\phi_A$.

 (1) By (4.5), (4.6) , (3.3) and
Propositions 4.2 and 4.3 we have for $2\leq s\leq r$
\begin{equation}
\|D^s(\phi_2\g_f)\| \leq C_{\phi} \mu^*_{s}(f)+\ab
P_{\phi,s}(M^*_{s-1}(f)).\end{equation} In fact, the only
nontrivial thing is to estimate $\|(D^s\phi_2
\ci\g_f)\cdot(D\g_f\t\cdots\t D\g_f)\|$ but, due to decomposition
(4.9) and Lemma 4.5, we have
\begin{align*}
\|(D^s\phi_2 \ci\g_f)\cdot(D\g_f\t\cdots\t
D\g_f)\|&\leq\|D^s_{(1)}\phi_2\ci\g_f\|+\|D^s_{(2)}\phi_2\ci\g_f\|\mu_1^*(f)\\
&\leq \ab C_{\phi,s}'\mu^*_{0}(f)+C''_{\phi,s}\mu^*_1(f)\\
&\leq\ab C_{\phi,s}M^*_{s-1}(f).
\end{align*}
We have
\begin{align*}
\|D^{r+1}u_f\|&=\|D^r(Du_f)\|=\|D^r(\phi_2\ci\Gamma_f\circ(\phi_1\ci\Gamma_f)^{-1})\|\\
&\leq C_{\phi} \mu^*_{r}(f)+\ab
P_{\phi,r}(M^*_{r-1}(f)).\end{align*} Indeed, in view of (3.3),
(3.5), (4.10) and Lemma 3.6(4), denoting
$\phi_{1f}=\phi_1\ci\Gamma_f$, the only nontrivial term to
estimate is
\begin{equation*}
\|D\phi_{1f}^{-1}\cdot(((D^r\phi_1\circ\Gamma_f)\cdot(D\Gamma_f\t\cdots\t
D\Gamma_f))\circ\phi_{1f}^{-1}) \cdot(D\phi_{1f}^{-1}\t\cdots\t
D\phi_{1f}^{-1})\|,
\end{equation*}
and this can be obtained as above.

 (2) We proceed analogously as
in (1) and, in addition, we have to show that
\[\|D^r(\lambda_f-1)\|\leq C_{\phi}
\|D^{r+1}u_f\|+\ab P_{\phi,r}(\sup_{i=0\ld r}\|D^iu_f)\|).\] This
can be done as above in view of (4.8), (4.9) and Lemma 4.5.
\end{proof}

\section{Two kinds of  fragmentations }

In most papers on the simplicity and perfectness of diffeomorphism
groups a clue role is played by fragmentation properties. These
properties  enable usually to reduce the proof to the case
$M=\rz^m$. Contrary to the volume element case  and the symplectic
case (c.f. [2]), in the contact case the fragmentation property
takes its general form.

 The following fragmentation
property for infinitesimal contact automorphisms is a consequence
of Proposition 2.1.

\begin{lem} Let $X\in \X_{c }(M,\a)$ with $\supp(X)\s\bigcup
^k_{i=1}U_i$, where $U_i$ are  open. Then there is a decomposition
$X=X_{1}+\cdots +X_{k}$ such that $X_{i}\in\X_{c } (M,\a) $ and
$\supp(X_{i})\subset U_i$. The same is true for smooth curves in
$\X_{c } (M,\a) $ instead of elements of $\X_{c } (M,\a)$.
\end{lem}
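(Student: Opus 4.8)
The plan is to transport the fragmentation problem from vector fields to functions via the isomorphism $\I_\a$ of Proposition 2.1, where it becomes trivial. First I would recall that $\I_\a:\X(M,\a)\to\CC^\infty(M)$, $X\mapsto i_X\a$, is a linear isomorphism of the space of contact vector fields onto $\CC^\infty(M)$, and that it is \emph{local}: from the formula $\I_\a^{-1}(H)=HX_\a+(d\a)^{-1}((i_{X_\a}\dd H)\a-\dd H)$ one reads off that $\I_\a^{-1}(H)(p)$ depends only on the $1$-jet of $H$ at $p$, so in particular $\supp(\I_\a^{-1}(H))\subseteq\supp(H)$; conversely $\supp(i_X\a)\subseteq\supp(X)$. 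Hence $\I_\a$ carries the compactly supported contact fields onto $\CC_c^\infty(M)$ and preserves supports in both directions.

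Now let $X\in\X_c(M,\a)$ with $\supp(X)\subseteq\bigcup_{i=1}^k U_i$, and set $H=\I_\a(X)\in\CC_c^\infty(M)$, so $\supp(H)=\supp(X)\subseteq\bigcup_{i=1}^k U_i$. Choose a smooth partition of unity $\{\rho_i\}_{i=1}^k$ subordinate to the cover $\{U_i\}$ of the open set $\bigcup U_i$, arranged so that $\sum_{i=1}^k\rho_i\equiv 1$ on a neighborhood of the compact set $\supp(H)$ and $\supp(\rho_i)\subseteq U_i$. Put $H_i=\rho_i H$ and $X_i=\I_\a^{-1}(H_i)$. Then $\supp(X_i)\subseteq\supp(\rho_i)\cap\supp(H)\subseteq U_i$ is compact, so $X_i\in\X_c(M,\a)$, and by linearity of $\I_\a^{-1}$ together with $\sum_i H_i=(\sum_i\rho_i)H=H$ on a neighborhood of $\supp(H)$ (and all terms vanishing off $\supp(H)$) we get $X_1+\cdots+X_k=\I_\a^{-1}(H_1+\cdots+H_k)=\I_\a^{-1}(H)=X$, as required.

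For the parametrized version, let $\{X_t\}_{t\in I}$ be a smooth curve in $\X_c(M,\a)$ with $\supp(X_t)\subseteq\bigcup_{i=1}^k U_i$ for all $t$ (with supports contained in a fixed compact set). Set $H_t=\I_\a(X_t)$; since $\I_\a$ is induced by a continuous linear bundle map, $t\mapsto H_t$ is a smooth curve in $\CC_c^\infty(M)$ with supports in a fixed compact subset of $\bigcup U_i$. Apply the \emph{same} partition of unity $\{\rho_i\}$ to every $t$: put $H_{i,t}=\rho_i H_t$ and $X_{i,t}=\I_\a^{-1}(H_{i,t})$. Smoothness in $t$ is preserved because multiplication by the fixed function $\rho_i$ and the fixed linear operator $\I_\a^{-1}$ are smooth, and the support and summation conclusions hold for each $t$ exactly as above. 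Since the partition of unity does not depend on $t$, the decomposition $X_t=X_{1,t}+\cdots+X_{k,t}$ is smooth in $t$. The only mildly delicate point is the uniformity: one must fix at the outset a compact $K$ containing all $\supp(X_t)$ and choose $\{\rho_i\}$ subordinate to $\{U_i\}$ with $\sum\rho_i\equiv1$ near $K$, which is possible since $K\subseteq\bigcup U_i$; after that everything is routine, so there is no real obstacle here beyond bookkeeping of supports.
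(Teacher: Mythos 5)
Your argument is correct and is exactly the one the paper intends: Lemma 5.1 is stated there simply as ``a consequence of Proposition 2.1,'' i.e.\ one transports the problem through the support-preserving isomorphism $\I_{\a}$ to $\CC^{\infty}_c(M)$, fragments with a partition of unity subordinate to $\{U_i\}$, and pulls back, with the parametrized case handled by a $t$-independent partition of unity. Your explicit observation that $\I_{\a}^{-1}$ is a local (1-jet-dependent, hence support-preserving) operator is precisely the point that makes this reduction work.
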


It follows the fragmentation property for $\con$.

\begin{lem} Let $f\in \con$ and let  $\{U_i\}_{i=1}^k$ be an open
cover of $M$.
 Then there exist   $f_{j}\in
\con$, $j=1\ld l$, with $f=f_{1}\ci\ldots\ci f_{l}$  such that
$\supp(f_{j})\subset U_{i(j)}$ for all $j$. The same is true for
isotopies of contactomorphisms instead of contactomorphisms.
\end{lem}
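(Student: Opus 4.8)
The plan is to pass from the infinitesimal fragmentation of Lemma 5.1 to the group level in the standard way, using that $f\in\con$ is joined to the identity by a compactly supported contact isotopy and then chopping that isotopy in time. First I would fix $f\in\con$ together with a smooth contact isotopy $\{h_t\}_{t\in[0,1]}$, $h_0=\id$, $h_1=f$, all $h_t$ supported in a fixed compact $K\subset M$, and let $X_t\in\X_c(M,\a)$ be the generating time-dependent contact vector field, as in (2.1). Choose a partition $0=t_0<t_1<\cdots<t_N=1$ so fine that for each subinterval $[t_{s-1},t_s]$ the ``piece'' $g_s:=h_{t_s}\ci h_{t_{s-1}}^{-1}$ is $C^1$-small; equivalently, work with the reparametrized isotopies generating each $g_s$. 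The point of making $N$ large is that each $g_s$ lies in a neighborhood of the identity small enough that its support, being $C^0$-close to $K$, is covered by $\{U_i\}$ in a controlled way — but in fact one does not even need smallness for the support argument, only for the next step. Then it suffices to fragment each $g_s$ separately and concatenate: $f=g_N\ci\cdots\ci g_1$, and a fragmentation of each $g_s$ subordinate to $\{U_i\}$ gives one for $f$.

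Next, to fragment a single isotopy, let $\{\rho_i\}_{i=1}^k$ be a smooth partition of unity subordinate to $\{U_i\}$, and for the isotopy $\{h_t^{(s)}\}$ generating $g_s$ with contact vector field $X_t^{(s)}$, apply Lemma 5.1 to write $X_t^{(s)}=X_{t,1}^{(s)}+\cdots+X_{t,k}^{(s)}$ with $X_{t,i}^{(s)}\in\X_c(M,\a)$ and $\supp X_{t,i}^{(s)}\subset U_i$ (this is where the partition of unity enters: via Proposition 2.1 one transfers $H_t\mapsto \rho_i H_t$ and pulls back through $\I_\a$, and the supports behave as claimed because $\I_\a$ and $\I_\a^{-1}$ preserve supports up to the given open sets). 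Integrating the time-dependent contact vector field $X_{t,1}^{(s)}$ alone gives a contact isotopy $f_{t,1}^{(s)}$ with $f_{1,1}^{(s)}=:k_{s,1}\in\con$ supported in $U_1$; peeling this off and iterating — i.e. considering the time-dependent field that generates $h_t^{(s)}\ci (f_{t,1}^{(s)})^{-1}$, fragmenting again, and so on — expresses $g_s$ as a product of $k$ contactomorphisms, the $j$-th supported in $U_{i(j)}$ with $i(j)=j$. One subtlety to check here is that each peeled-off factor still has compact support and that the remaining isotopy is still defined for all $t\in[0,1]$; this is guaranteed if the $C^1$-smallness from the partition has been arranged, so that all the flows in question exist on $[0,1]$ and stay within a fixed compact neighborhood of $K$. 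Stringing together the $N$ fragmentations $g_s=k_{s,k}\ci\cdots\ci k_{s,1}$ yields $f=f_1\ci\cdots\ci f_l$ with $l=Nk$ and each $\supp(f_j)$ contained in some $U_{i(j)}$.

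The same argument applied to a given isotopy $\{f_t\}$ rather than to a single element — fragmenting the generating vector field at every time simultaneously and integrating — gives the ``isotopy'' version of the statement, since the construction above is manifestly natural in $t$. I expect the main obstacle, such as it is, to be purely bookkeeping: ensuring the peeled-off factors remain compactly supported and that all the intermediate flows are complete on $[0,1]$ after the time-chopping. This is handled by choosing $N$ large enough (using local contractibility of $\cont_c(M,\a)$, already invoked in the introduction, and the estimates of section 3 if one wants quantitative control), and it is the reason one fragments the isotopy rather than attempting to fragment $f$ directly. No new idea beyond Lemma 5.1 and the $\I_\a$-correspondence of Proposition 2.1 is needed; the statement is the usual form of fragmentation, valid here precisely because — unlike the volume or symplectic cases — there is no flux-type obstruction, as emphasized in the introduction.
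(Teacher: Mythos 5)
Your proposal is correct and takes essentially the same route as the paper, whose proof is a single sentence: use the correspondence (2.1) between isotopies in $\con$ and smooth curves in $\X_c(M,\a)$, fragment the generating time-dependent contact vector field via Lemma 5.1, and integrate the pieces back to group elements. One small inaccuracy in a side remark: the $C^0$-smallness obtained by chopping the isotopy in time \emph{is} needed for the support control (the peeled-off factors are generated by vector fields conjugated by the partial flows, so their supports are only contained in the images of the $U_i$ under maps close to the identity, which is why one shrinks the cover and allows $l>k$), not only for completeness of the intermediate flows.
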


The proof exploits the correspondence between isotopies in $\con$
and smooth curves in  $\X_{c } (M,\a)$ given by (2.1) combined
with Lemma 5.1.

The fragmentation in Lemma 5.2 is said to be of the {\it first
kind}. This lemma enables to replace $\con$ by $\corr$ in the
proof of Theorem 1.1. However, we need in this proof also the {\it
second kind } of fragmentations. Such  fragmentations exist in a
$C^1$ neighborhood of the identity in the groups $\cork$,
$k=0,1\ld n+1$. Moreover, we  claim that the norms of the factors
of a given fragmentation are estimated by the norm of the initial
contactomorphism in a convenient way and that the fragmentation
itself is uniquely determined.

\begin{dff} Suppose $E$ is a subinterval of $E_A^{(k)}$.
Let $\psi:\C_k^m\r [0,1]$ be a smooth function. It follows from
Proposition 4.6 that there exists a $C^1$-neighborhood of the
identity $\U_{\phi,\psi,A}\s\U_1$ such that for any
$f\in\U_{\phi,\psi,A}$ with $\supp(f)\s E$ the contactomorphism
\[f^{\psi}:=\Phi_A^{-1}(\psi\Phi_A(f))=\Phi_A^{-1}(\psi
u_f)\]
 is well-defined and $\supp(f^{\psi})\s E$.
\end{dff}
In fact, for any $r\geq 1$ there is a polynomial  without constant
term $P_{\psi,r}$ such that for all $u\in\cc_c(\C_k^m)$

\begin{align}
\begin{split}
\|D^{r+1}(\psi u)\|
&\leq\|D^{r+1}u\|+\sum_{j=1}^{r+1}C_{r,j}\|D^j\psi\|\|D^{r+1-j}u\|\\
&\leq\|D^{r+1}u\|+P_{\psi,r}\big(\sup_{s=0\ld r}\|D^su\|\big).
\end{split}
\end{align}
In particular we may ensure that $\psi u_f\in\V_2$. The following
is obvious.

\begin{prop}
One has $\supp(\fp)\s\supp(\psi)$ and $\fp=f$ on any open
$U\s\C_k^m$ such that $\psi=1$ on $U$.
\end{prop}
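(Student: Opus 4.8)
The plan is to derive both statements from the fact that $\Phi_A^{-1}$ is a \emph{local} operator, which can be read off directly from formulas (4.7)--(4.8): the value of $\Phi_A^{-1}(u)$ at a point $p$ is obtained by applying the fixed contactomorphism $\phi_A^{-1}$ — which restricts to $\id_L$ on the Legendrian $L=\C^m_k\t 0\t 0$ — to the $1$-jet section $\gd_u$, and reparametrising by the diffeomorphism $h_u:=\pr_1\ci\phi_A^{-1}\ci\gd_u$ of $\C^m_k$, which is $C^1$-close to the identity for $u\in\V_2$. I would first record two consequences. \emph{(i)} If $u\equiv 0$ on an open set $O\s\C^m_k$, then $\Phi_A^{-1}(u)=\id$ and $\lambda_{\Phi_A^{-1}(u)}=1$ on $O$: indeed $\dd u=0$ on $O$, so $\gd_u(q)=(q,0,0)\in L$ and hence $\phi_A^{-1}(\gd_u(q))=(q,0,0)$ for $q\in O$; therefore $h_u(q)=q$ on $O$, so $h_u^{-1}(q)=q$ for $q\in O$, and (4.7)--(4.8) give $\Phi_A^{-1}(u)(q)-q=\pr_2(q,0,0)=0$ and $\lambda_{\Phi_A^{-1}(u)}(q)-1=\pr_3(q,0,0)=0$. \emph{(ii)} If $u_1,u_2\in\V_2$ agree on an open set $O$, then $\Phi_A^{-1}(u_1)=\Phi_A^{-1}(u_2)$ on $O$: on $O$ we have $\gd_{u_1}=\gd_{u_2}$, hence $h_{u_1}=h_{u_2}$ on $O$; for $p\in O$ with $h_{u_1}^{-1}(p)\in O$ this forces $h_{u_2}^{-1}(p)=h_{u_1}^{-1}(p)$, and then $\phi_A^{-1}\ci\gd_{u_1}\ci h_{u_1}^{-1}$ and $\phi_A^{-1}\ci\gd_{u_2}\ci h_{u_2}^{-1}$ agree at $p$, i.e. $\Phi_A^{-1}(u_1)(p)=\Phi_A^{-1}(u_2)(p)$.

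Granting (i) and (ii), both claims are immediate. Since $\psi u_f\equiv 0$ on the open set $\C^m_k\setminus\supp(\psi)$, (i) applied to $u=\psi u_f$ gives $\fp=\Phi_A^{-1}(\psi u_f)=\id$ there, hence $\supp(\fp)\s\supp(\psi)$. For the second assertion, on the given open $U$ we have $\psi\equiv 1$, so $\psi u_f=u_f$ on $U$, and (ii) with $u_1=u_f$, $u_2=\psi u_f$, $O=U$ yields $\fp=\Phi_A^{-1}(\psi u_f)=\Phi_A^{-1}(u_f)=f$ on $U$.

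The step I expect to be the only real point — the content the paper compresses into ``obvious'' — is the clause ``$h_{u_1}^{-1}(p)\in O$ for $p\in O$'' in (ii): since $h_u$ is merely $C^1$-close to, and not equal to, the identity, this is not automatic for $p$ near $\p O$. It is disposed of by the locality of the construction: the displacement $|h_u^{-1}(p)-p|$ is bounded in terms of the $C^1$-size of $u$, hence by the fixed size of the neighbourhood $\V_2$, while $h_u$ is exactly the identity wherever $u$ vanishes (as in (i)); working locally on balls, one reduces to the case where the relevant reparametrisation moves the centre of each ball by less than its radius, where the computation above applies verbatim. This is routine — it parallels the tacit locality arguments of Section~4 — and I would treat it accordingly rather than belabour the epsilons.
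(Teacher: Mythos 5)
Your computations (i) and (ii) are the right ones, and your treatment of the first assertion is complete: since $h_u$ is a bijection with $h_u(q)=q$ for every $q$ in the open set where $u$ vanishes identically, injectivity gives $h_u^{-1}(q)=q$ there, and (4.7)--(4.8) then yield $\fp=\id$ and $\lambda_{\fp}=1$ off $\supp(\psi u_f)$, hence $\supp(\fp)\s\supp(\psi)$. The paper offers no proof at all here, so you are supplying exactly the argument it suppresses.

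The difficulty is the point you yourself isolate and then wave away. The condition $h_{u_2}^{-1}(p)\in O$ for $p\in O$ near $\p O$ is not routine, and the ball-covering reduction does not close it: the displacement $\delta=\sup|h_{u_2}^{-1}-\id|$ is a fixed positive quantity determined by $\V_2$, whereas $\dist(p,\p U)$ can be arbitrarily small, so no covering of $U$ by balls contained in $U$ makes ``the centre moves by less than the radius'' true for points within $\delta$ of $\p U$. One can in fact compute the agreement set exactly: writing $k_f=\pr_1\ci\phi_A\ci\Gamma_f$ and $E=\{q:\ j^1(\psi u_f)(q)=j^1u_f(q)\}$, the set where $\Gamma_f=\Gamma_{\fp}$ is precisely $k_f^{-1}(E)$, because $\phi_A$ carries the intersection of the two Legendrian graphs of $f$ and $\fp$ bijectively onto the intersection of the $1$-jet graphs of $u_f$ and $\psi u_f$, which sits exactly over $E$. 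Since $k_f$ is only $C^0$-close to the identity and $E$ need contain no more than $\overline U\cup\{j^1u_f=0\}$, nothing forces $k_f^{-1}(E)\supset U$; your argument (and the locality of the chart) only gives $\fp=f$ on $k_f^{-1}(U)$, i.e. on $\{p\in U:\dist(p,\p U)>\delta\}$. The correct repair --- and the way the statement is actually used in Proposition 5.6 and in the definition of $\Xi^{(k)}_A$ --- is that $\psi\equiv1$ on a neighbourhood of the closed region where one needs $\fp=f$, and the neighbourhood $\U_{\phi,\psi,A}$ of the identity (which by Convention 4.4 may depend on $\psi$) is shrunk until $\delta$ is smaller than that margin. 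You should prove the proposition in this ``with margin'' form rather than treat the boundary case as free.
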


\begin{lem} Under the above notation,
  for any $r\geq 2$ there are
 polynomials $P_{\phi,\psi,r}$  without constant term
and  constants $\beta$, $C_{\phi,\psi}$ such that
\begin{equation*}
\mu^*_r(\fp)\leq C_{\phi,\psi} \mu^*_r(f)+\ab
P_{\phi,\psi,r}(M^{\l}_{r-1}(f)),
\end{equation*} whenever $f\in\U_{\phi,\psi,A}$ and
$\supp(f)\s E$. In particular, if $R_{E}\leq 2$ (c.f. (3.1) and
(3.8)) there exists a constant $ C_{\phi,\psi,r}$ such that
$\mu^*_r(\fp)\leq \ab C_{\phi,\psi,r} \mu^*_r(f)$ for all
$f\in\U_{\phi,\psi,A}$ with $\supp(f)\s E$.
 \end{lem}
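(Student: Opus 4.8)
The plan is to estimate $\mu^*_r(\fp)$ by unwinding the definition $\fp=\Phi_A^{-1}(\psi u_f)$ and applying the two-sided comparison between the chart norm $\|D^{r+1}u_f\|$ and the contact norm $\mu^*_r(f)$ supplied by Proposition 4.6, together with the multiplication estimate (5.1) for $\psi u_f$. First I would fix $r\geq 2$ and a function $f\in\U_{\phi,\psi,A}$ with $\supp(f)\s E$, and apply Proposition 4.6(1) to get
\[\|D^{r+1}u_f\|\leq C_\phi\mu^*_r(f)+\ab P_{\phi,r}(M^*_{r-1}(f)).\]
Then I would feed this into (5.1) with $u=u_f$: the leading term is controlled by $\|D^{r+1}u_f\|$, and the correction $P_{\psi,r}(\sup_{s\leq r}\|D^su_f\|)$ is a polynomial without constant term in the lower-order chart norms. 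These lower-order chart norms $\|D^su_f\|$, $s\leq r$, must in turn be bounded by $M^*_{r-1}(f)$ (for $s\leq r$, using that $u_f$ and its derivatives vanish with $f-\id$); this is again a consequence of the construction of $\Phi_A$ via (4.5)--(4.6), (3.3) and Lemma 4.5, exactly as in the proof of Proposition 4.6, so one obtains $\sup_{s\leq r}\|D^su_f\|\leq C_\phi M^*_{r-1}(f)+\ab P'_{\phi,r}(M^*_{r-1}(f))$ — here one should note that $M^*_{r-1}(f)$ is small, so a polynomial in it is again dominated by $\ab$ times a polynomial without constant term. Combining, $\psi u_f$ satisfies $\|D^{r+1}(\psi u_f)\|\leq C_{\phi,\psi}\mu^*_r(f)+\ab P_{\phi,\psi,r}(M^*_{r-1}(f))$.

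Next I would run Proposition 4.6(2) in the reverse direction for the function $v:=\psi u_f$, which by Definition 5.3 lies in $\V_2$, so that $\fp=\Phi_A^{-1}(v)$ is a genuine contactomorphism supported in $E$ with
\[\mu^*_r(\fp)\leq C_\phi\|D^{r+1}v\|+\ab P_{\phi,r}\big(\sup_{i\leq r}\|D^iv\|\big).\]
The term $\|D^{r+1}v\|$ was just estimated; the lower-order terms $\|D^iv\|=\|D^i(\psi u_f)\|$ for $i\leq r$ are, again by Leibniz, bounded by $C_\psi\sup_{s\leq i}\|D^su_f\|\leq \ab C_{\phi,\psi} M^*_{r-1}(f)$ plus a polynomial without constant term in $M^*_{r-1}(f)$. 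Since $M^*_{r-1}(f)<1$ for $f$ in a sufficiently small $C^1$-neighborhood, all these polynomial contributions can be absorbed into a single admissible-type polynomial $P_{\phi,\psi,r}$ without constant term, evaluated at $M^*_{r-1}(f)$, with a prefactor $\ab$; I should shrink $\U_{\phi,\psi,A}$ once more if needed so that all the intermediate quantities stay in the relevant neighborhoods of zero. This yields the first displayed inequality of the lemma.

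For the ``in particular'' clause, I would invoke Proposition 3.5 (valid for $\cork$ in view of Remark 3.7, with $R_E$ as in (3.8)): when $R_E\leq 2$, each lower-order norm $\mu^*_s(f)$ with $s\leq r-1$ is bounded by a constant $C$ (depending only on $R_E\leq 2$) times $\mu^*_r(f)$, hence $M^*_{r-1}(f)\leq C\mu^*_r(f)$, and since $\mu^*_r(f)$ is small, any polynomial without constant term in $M^*_{r-1}(f)$ is at most a constant multiple of $\mu^*_r(f)$ itself. Substituting this into the first inequality collapses the right-hand side to $\ab C_{\phi,\psi,r}\mu^*_r(f)$, which is the claim.

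The main obstacle I anticipate is bookkeeping rather than conceptual: one must check that at every stage the argument of each polynomial stays inside the shrunken $C^1$-neighborhood so that Proposition 4.6 applies in both directions, and that the repeated use of ``polynomial without constant term evaluated at a small quantity is dominated by a multiple of that quantity'' is consistently organized so that the final prefactor is exactly $\ab$ (not some higher power of $A$) in accordance with Convention 4.4. The key input that makes the $\ab$ prefactor appear is Lemma 4.5, which localizes the estimate of $D^r_{(1)}(\pr_i\ci\phi_A)$ to a Lipschitz bound proportional to $\ab$ times the distance to $L$; keeping track of where that single power of $\ab$ enters is the delicate point.
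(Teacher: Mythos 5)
Your proposal is correct and follows essentially the same route as the paper, whose proof of this lemma is just the two sentences "The first assertion follows from Proposition 4.6 and (5.1). The second is a consequence of Proposition 3.5." You have simply filled in the details the paper leaves implicit: Proposition 4.6(1) to pass from $\mu^*_r(f)$ to $\|D^{r+1}u_f\|$, the Leibniz estimate (5.1) for $\psi u_f$, Proposition 4.6(2) to come back to $\mu^*_r(f^{\psi})$, and Proposition 3.5 to absorb $M^{\l}_{r-1}(f)$ into $\mu^*_r(f)$ when $R_E\leq 2$.
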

\begin{proof}
The first assertion follows from Proposition 4.6 and (5.1). The
second is a consequence of Proposition 3.5.
\end{proof}

In particular,   we obtain fragmentations of the second kind on
large intervals in $\R^m$.
\begin{prop}

 Let $2A>1$ be an even integer, let  and
let $\psi:[0,1]\r[0,1]$ be a smooth function such that $\psi=1$ in
a neighborhood of $[0,{1\over 4}]$ and $\psi=0$ on $[{3\over
4},1]$. Then there exists a $C^1$-neighborhood $\U_{\phi,\psi,A}$
of the identity in $\cont_{E_{2A}}(\rr,\as)_0$, c.f. (2.6), such
that for any $f\in\U_{\phi,\psi,A}$ there exists a decomposition
$f=f_1\ldots f_{4A+1}$, uniquely determined by $\phi$, $\psi$ and
$A$,  where each $\supp(f_{\kappa})$ is contained in an interval
of the form $([k-{3\over 4},k+{3\over 4}]\t\rz^{2n})\cap E_{2A}$,
 with $k\in\mathbb Z$ , $|k|\leq 2A$, and
  the inequalities
  \begin{enumerate}
  \item
$\mu_r^*(f_{\kappa})\leq C_{\phi,\psi}\mu^{\l}_r(f)+\ab
P_{\phi,\psi,r}(M^{\l}_{r-1}(f))$, \item $\mu^*_r(f_{\kappa})\leq
\ab C_{\phi,\psi,r} \mu^*_r(f)$, whenever $\supp(f)\s E\s E_{2A}$
with $R_{E}\leq 2$,
\end{enumerate}
hold for all $\kappa=1\ld 4A+1$ and $r\geq 2$.
   Analogous decompositions can be obtained w.r.t. the variables
   $x_i$ and $y_i$, $i=1\ld n$.
\end{prop}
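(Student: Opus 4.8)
The plan is to obtain the decomposition by iterating the second-kind fragmentation operation $f\mapsto f^{\psi}$ of Definition 5.3 along the $x_0$-axis against a fixed family of cutoff functions built from $\psi$ and $A$. First I would enumerate the integers $k$ with $|k|\le 2A$ as $k(1),\dots,k(4A+1)$ in decreasing order, and note that the closed slabs $S_\kappa:=[k(\kappa)-3/4,k(\kappa)+3/4]\times\rz^{2n}$ cover $E_{2A}$, while two consecutive ones overlap in an $x_0$-interval of length $1/2$, which is at least the width of the transition region of $\psi$. Hence, by affine reparametrization, translation and multiplication of $\psi$, one builds smooth functions $\Psi_0\equiv 1\ge\Psi_1\ge\cdots\ge\Psi_{4A+1}\equiv 0$ on $\rz^m$, each depending only on $x_0$, with values in $[0,1]$, such that $\Psi_{\kappa-1}$ and $\Psi_\kappa$ agree outside the open slab $(k(\kappa)-1/2,k(\kappa)+1/2)\times\rz^{2n}$. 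Since translation does not change derivatives, $\|D^s\Psi_\kappa\|$ is bounded in terms of $\psi$ and $s$ only, and the $\Psi_\kappa$ depend only on $\psi$ and $A$.

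I would use the chart $\Phi_{2A}$ of section 4 (built from $\phi_{2A}$ of Proposition 4.2), which is available for elements supported in $E_{2A}=E_{2A}^{(0)}$, so that Lemma 5.5 applies with $2A$ in place of $A$ and the resulting coefficient $(2A)^\beta$ is still of the form $\ab$. Choose the $C^1$-neighborhood $\U_{\phi,\psi,A}$ of the identity in $\cont_{E_{2A}}(\rr,\as)_0$ small enough that for every $f$ in it and each of the functions $\Psi_\kappa$ one has $\Psi_\kappa u_f\in\V_2$ and $\mu_0^{*}(f^{\Psi_\kappa}),\mu_1^{*}(f^{\Psi_\kappa})<1/2$; this is possible by continuity of the chart (Proposition 4.6) together with the boundedness of multiplication by $\Psi_\kappa$ on $C^2$. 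Then set
\[
f_\kappa:=f^{\Psi_{\kappa-1}}\circ(f^{\Psi_\kappa})^{-1},\qquad\kappa=1,\dots,4A+1,
\]
with $f^{\Psi}=\Phi_{2A}^{-1}(\Psi u_f)$ as in Definition 5.3. Because $\Psi_0\equiv 1$ and $\Psi_{4A+1}\equiv 0$ on a set containing $\supp u_f$, one has $f^{\Psi_0}=f$ and $f^{\Psi_{4A+1}}=\id$, so the product telescopes: $f_1\circ\cdots\circ f_{4A+1}=f$. Since $\Psi_{\kappa-1}$ and $\Psi_\kappa$ coincide on the open complement of the slab $(k(\kappa)-1/2,k(\kappa)+1/2)\times\rz^{2n}$, the locality of $\Phi_{2A}^{-1}$ (the same property used for Proposition 5.4, visible from the explicit formula (4.7)) gives $f^{\Psi_{\kappa-1}}=f^{\Psi_\kappa}$ there; hence $f_\kappa=\id$ off that slab, and together with $\supp f_\kappa\subset\supp f\subset E_{2A}$ and the slack between the two slab sizes, $\supp f_\kappa\subset S_\kappa\cap E_{2A}$, as required. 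Uniqueness is automatic, since the $\Psi_\kappa$ depend only on $\psi,A$ and $\Phi_{2A}$ only on $\phi$, so the displayed formula determines $f_\kappa$ from $f,\phi,\psi,A$.

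It remains to prove the two estimates. For $r\ge 2$, Lemma 5.5 (with $\Psi_\kappa$ in place of $\psi$ and $2A$ in place of $A$) yields $\mu_r^{*}(f^{\Psi_\kappa})\le C_{\phi,\psi}\mu_r^{\l}(f)+\ab P_{\phi,\psi,r}(M^{\l}_{r-1}(f))$, with leading constant independent of $A$ because $\|D^s\Psi_\kappa\|$ is; the corresponding bound for $\mu_1^{*}$ and the inequality $\mu_0^{*},\mu_1^{*}<1/2$ hold by the choice of $\U_{\phi,\psi,A}$. Lemma 3.6(4) then bounds $\mu_r^{*}((f^{\Psi_\kappa})^{-1})$ by an expression of the same shape, and Lemma 3.6(1)--(2), applied to the two-fold composition $f_\kappa=f^{\Psi_{\kappa-1}}\circ(f^{\Psi_\kappa})^{-1}$, assembles these into inequality (1); the numerical factors produced by a composition of two maps introduce no $A$-dependence. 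When moreover $\supp f\subset E\subset E_{2A}$ with $R_E\le 2$, the final assertion of Lemma 5.5 gives $\mu_r^{*}(f^{\Psi_\kappa})\le\ab C_{\phi,\psi,r}\mu_r^{*}(f)$, and since $\supp f^{\Psi_\kappa}\subset\supp f\subset E$ as well one repeats the previous step keeping every term proportional to $\mu_r^{*}(f)$, which gives (2). The decompositions with respect to the $x_i$ and $y_i$ follow by the identical argument with cutoffs depending on $x_i$ (resp. $y_i$) instead of $x_0$; the chart may be taken independent of the $x_i$ (Proposition 4.2(1)), and the $A^2$-versus-$A$ scaling of the coordinates only affects constants absorbed into $\ab$.

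I expect the one genuinely delicate point to be the coordinated choice of the $\Psi_\kappa$: one must simultaneously arrange that the telescoping recovers $f$ exactly, that the support of each factor falls into a single slab, and that the constant multiplying $\mu_r^{*}(f)$ in the estimates is $A$-independent. All three hinge on the overlap $1/2$ of consecutive slabs exceeding the transition width of $\psi$ and on the $\Psi_\kappa$ being translates of a single profile; everything else is bookkeeping with Lemmas 3.6 and 5.5 under Convention 4.4.
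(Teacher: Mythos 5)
Your strategy differs from the paper's: the paper extends $\psi$ to a single $2$-periodic profile $\psi_1(x_0)$, applies the second-kind fragmentation once, and obtains the desired factors from the observation that $f^{\psi_1}$ is automatically supported in the union of the disjoint even slabs $\bigcup_k[2k-\frac34,2k+\frac34]\times\R^{2n}$ while $f(f^{\psi_1})^{-1}$ is supported in the union of the disjoint odd slabs; disjointness splits each of the two contactomorphisms into its $2A+1$ (resp.\ $2A$) factors with no telescoping needed. Your telescoping scheme is a legitimate alternative in spirit, but as written it contains a genuine obstruction in the construction of the cutoffs. You require that $\Psi_{\kappa-1}$ and $\Psi_{\kappa}$ agree off the open slab $(k(\kappa)-\tfrac12,k(\kappa)+\tfrac12)\times\R^{2n}$, i.e.\ that $\chi_\kappa:=\Psi_{\kappa-1}-\Psi_{\kappa}$ is supported in the closed width-$1$ interval $[k(\kappa)-\tfrac12,k(\kappa)+\tfrac12]$. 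But the telescoping identity forces $\sum_{\kappa}\chi_\kappa=\Psi_0-\Psi_{4A+1}\equiv 1$, so the $\chi_\kappa$ would form a smooth partition of unity subordinate to a cover of $[-2A-\tfrac12,2A+\tfrac12]$ by closed intervals with pairwise disjoint interiors. That is impossible: at each half-integer point $k(\kappa)+\tfrac12=k(\kappa-1)-\tfrac12$, both $\chi_\kappa$ and $\chi_{\kappa-1}$ vanish (by your support condition) and every other $\chi$ vanishes identically nearby, so the sum is $0$, not $1$.

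To salvage the telescoping route you would have to let consecutive $\Psi_{\kappa}$'s differ on wider, genuinely overlapping slabs (say of width strictly between $1$ and $\tfrac32$), and then you must also verify that after the small spread introduced by $\Phi_{2A}^{-1}$ (implicit in Proposition~5.4) the support of $f_\kappa$ still lands inside $[k(\kappa)-\tfrac34,k(\kappa)+\tfrac34]\times\R^{2n}$. This needs a transition profile sharper than the width-$\tfrac12$ transition that the given $\psi$ guarantees, so the $\Psi_\kappa$ cannot simply be translates/reparametrizations of $\psi$ as you suggest; a fresh, tighter bump must be chosen once and for all from $\psi$. You correctly identified the coordinated choice of the $\Psi_\kappa$ as the delicate point of your scheme, but the specific choice you propose cannot exist. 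The estimate part of your argument (Lemma~5.5 applied to each $f^{\Psi_\kappa}$, Lemma~3.6 for inverses and the two-fold composition, and keeping the leading constant $A$-independent because the $\Psi_\kappa$ are translates of one profile) is fine once valid cutoffs are in hand, and the claim that $\supp f^{\Psi_\kappa}\subset E$ under $R_E\le 2$ needs the same small-spread remark, which is harmless. Even after the fix, the paper's even/odd split is cleaner: it needs only one application of $\Phi_A^{-1}$ instead of $4A$, and the factor supports fall into slabs automatically rather than by a delicate overlap argument.
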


\begin{proof}  By abusing the notation we extend $\psi$ to the
function $\psi:[-1,1]\r[0,1]$ given by $\psi(x)=\psi(-x)$ on
$[-1,0]$, and, finally, to the periodic function
 $\psi:\R\r [0,1]$ of period $2$.

Let $\psi_1=\psi\ci\pr_{1}:\rz^m\r[0,1]$, where
$\pr_1(x_0,x,y)=x_0$. Let $f\in\cont_{E_{2A}}(\rr,\as)_0$ be
sufficiently $C^1$-close to the identity and let $f^{\psi_1}$ be
defined as in Definition 5.3. Then we have \begin{enumerate}\item
$ f^{\psi_1}=\prod_{k=-{A}}^{A}f_{2k}$, with
$\supp(f_{2k})\s[2k-\frac{3}{4},2k+\frac{3}{4}]\t\R^{2n}$, and
\item $f(f^{\psi_1})^{-1}=\prod_{k=-{A}}^{{A}-1}f_{2k+1}$ with
$\supp(f_{2k+1})\s[2k+\frac{1}{4} ,2k+\frac{7}{4})]\t\R^{2n}$.
\end{enumerate} The inequalities follow from Lemmas 3.6 and 5.5.
For convenience we renumerate $f_{\kappa}$.

\end{proof}
By applying Proposition 5.6 consecutively to all variables we get
\begin{prop}
Under the above assumptions , there exists a $C^1$-neighborhood of
the identity $\U_{\phi,\psi,A}\s\cont_{E_{2A}}(\rr,\as)_0$ such
that for any $f\in\U_{\phi,\psi,A}$ there exists a decomposition
$f=f_1\ldots f_{a_m}$, uniquely determined by $\phi$, $\psi$ and
$A$, where $a_m=(4A+1)^{m}$ and where each $\supp(f_{\kappa})$ is
contained in an interval of the form $([k_1-{3\over 4},k_1+{3\over
4}]\t\cdots\t[k_{m}-{3\over 4},k_{m}+{3\over 4}])\cap E_{2A}$,
with $k_i\in\mathbb Z$ , $| k_i|\leq 2A$, for $i=1\ld m$.
Moreover, for all $\kappa=1\ld a_m$ and $r\geq 2$
  \begin{enumerate}
  \item
$\mu_r^{\l}(f_{\kappa})\leq
C_{\phi,\psi}\mu^{\l}_r(f)+A^{\beta(m)}P_{\phi,\psi,r}(M^{\l}_{r-1}(f))$,
\item $\mu^*_r(f_{\kappa})\leq A^{\beta(m)}C_{\phi,\psi,r}
\mu^*_r(f)$, whenever $\supp(f)\s E\s E_{2A}$ with $R_{E}\leq 2$.
\end{enumerate}
\end{prop}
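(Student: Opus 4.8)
The plan is to obtain Proposition 5.7 by iterating Proposition 5.6 over all $m$ coordinate directions, exactly as indicated in the sentence preceding its statement. Proposition 5.6 produces, from a single contactomorphism $C^1$-close to the identity and supported in $E_{2A}$, a decomposition $f=f_1\cdots f_{4A+1}$ into factors supported in slabs of width $3/2$ centered at integer values of the variable $x_0$; the final sentence of Proposition 5.6 asserts the analogous statement for each of the remaining variables $x_1\ld x_n$ and $y_1\ld y_n$. So the proof is an induction on the number of coordinate directions that have already been "fragmented". At stage $0$ we have $f$ itself. Assume after fragmenting the first $j$ coordinates we have written $f=g_1\cdots g_{(4A+1)^j}$ where each $g_\kappa$ is supported in an interval which is a product of $j$ slabs $[k_i-\tfrac34,k_i+\tfrac34]$ (in the already-processed directions) and full ranges $[-2A,2A]$ in the remaining directions. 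Then we apply Proposition 5.6 to each $g_\kappa$ with respect to the $(j{+}1)$-st variable; this requires checking that each $g_\kappa$ still lies in the relevant $C^1$-neighborhood $\U_{\phi,\psi,A}$, which we arrange by shrinking $\U_{\phi,\psi,A}$ at the start (finitely many shrinkings, $m$ of them, so the final neighborhood is still a genuine $C^1$-neighborhood of the identity). Each $g_\kappa$ splits into $4A+1$ factors supported in products of $j{+}1$ slabs; concatenating over $\kappa$ gives a decomposition of $f$ into $(4A+1)^{j+1}$ factors with supports of the required form. After $m$ steps we reach $a_m=(4A+1)^m$ and every factor is supported in a product of $m$ slabs $[k_i-\tfrac34,k_i+\tfrac34]$, which is the claimed support condition.

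Next I would assemble the norm estimates. At each stage the inequalities of Proposition 5.6(1)--(2) control the norms of the new factors in terms of the norms of the factor being split. Composing these bounds through the $m$ stages: for (1), each application contributes a constant factor $C_{\phi,\psi}$ and an additive term of the form $A^\beta P_{\phi,\psi,r}(M^{\l}_{r-1}(\cdot))$; iterating $m$ times and using that admissible polynomials and polynomials without constant term are closed under composition and under the substitutions $M^{\l}_{r-1}(g_\kappa)\le$ (polynomial in $M^{\l}_{r-1}(f)$) coming from Lemma 3.6 and the lower-stage estimates, one gets a single bound $\mu_r^{\l}(f_\kappa)\le C_{\phi,\psi}\mu^{\l}_r(f)+A^{\beta(m)}P_{\phi,\psi,r}(M^{\l}_{r-1}(f))$ with a new constant, a new polynomial without constant term, and an exponent $\beta(m)$ that is $m$ (or a fixed multiple of $m$) times the exponent from the one-variable case. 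For (2), under the hypothesis $R_E\le 2$ one uses the multiplicative estimate $\mu^*_r(f_\kappa)\le A^\beta C_{\phi,\psi,r}\,\mu^*_r(\cdot)$ at each stage; here I must make sure that the support of each intermediate factor $g_\kappa$ still satisfies $R_{E'}\le 2$, which holds because each $g_\kappa$ is supported in a product of closed slabs of width $3/2<4$ in the processed directions and in $E$ in the others, so $R_{E'}\le R_E\le 2$. Iterating the multiplicative bound $m$ times gives $\mu^*_r(f_\kappa)\le A^{\beta(m)}C_{\phi,\psi,r}\,\mu^*_r(f)$.

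Finally, uniqueness of the decomposition is inherited from Proposition 5.6: there the factors $f^{\psi_1}$ and $f(f^{\psi_1})^{-1}$, and then their further splittings, are all produced by the canonical operation $g\mapsto g^{\psi}=\Phi_A^{-1}(\psi\,\Phi_A(g))$ of Definition 5.3, which depends only on $\phi$, $\psi$, and $A$. Since at every stage we apply the same canonical operation (with $\psi$ composed with the appropriate coordinate projection), the resulting decomposition $f=f_1\cdots f_{a_m}$ is again uniquely determined by $\phi$, $\psi$, and $A$. I expect the main obstacle to be purely bookkeeping rather than conceptual: keeping careful track, through the $m$-fold iteration, of (a) how the admissible polynomials and the polynomials $P_{\phi,\psi,r}$ compose and absorb the lower-order estimates from Lemma 3.6 without acquiring a constant term, and (b) how the powers of $A$ accumulate into the single exponent $\beta(m)$, while simultaneously verifying at each stage that the intermediate factors remain inside the (finitely-often-shrunk) $C^1$-neighborhood $\U_{\phi,\psi,A}$ and that their supports still meet the $R_E\le 2$ condition needed for part (2). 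None of these steps is deep, but they must be organized so that the $m$ applications of Proposition 5.6 chain together cleanly.
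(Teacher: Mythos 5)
Your proposal is correct and follows exactly the paper's approach: the paper offers no separate proof of Proposition 5.7 beyond the single remark ``By applying Proposition 5.6 consecutively to all variables,'' and your iterative scheme (shrinking $\U_{\phi,\psi,A}$ finitely many times, chaining the norm estimates, noting $R_{E'}\le R_E\le 2$ is preserved, and deriving uniqueness from the canonical $g\mapsto g^{\psi}$ construction) is precisely the bookkeeping that sentence leaves to the reader.
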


\section{Shifting supports of contactomorphisms}

From now on we set for $A>1$
\begin{equation}
I_A=[-2,2]^{n+1}\t[-2A,2A]^n.
\end{equation}

 In this section we will describe the procedure of
shifting supports of contactomorphisms on $\R^m$ in the $y_i$
directions. Fortunately, this can be done by using the
contactomorphisms $\sigma_{i,t}$, $i=1\ld n$, introduced in
section 2. Fix  $1\leq i\leq n$ and put $\sigma_t=\sigma_{i,t}$.
Recall that $\sigma_t(x_0,x,y)=(x_0+tx_i,x,y+t{\bf 1}_i)$. Notice
that for any $t\in\R$ we have
 $\|D\sigma_t\|=1+|t|$,   and
$\|D^r\sigma_t\|=0$ for all $r>1$. Next we define
$\rho_{A,t}=\eta_A\ci\chi_A\ci\sigma_t$, see section 2.

Under  the assumption $A>5n$, observe that
\begin{equation}
\supp(\rho_{A,t}\ci f\ci\rho_{A,t}^{-1})\s J_A,
\end{equation}
where \begin{equation} J_A=[-A^5,A^5]^{n+1}\t[-2A,2A]^n,
\end{equation}
for all $f\in\corr$ with support in
$[-2,2]^{n+i}\t[k-1,k+1]\t[-2,2]^{n-i}$ with $|k|\leq 2A$ and
suitable $t$. Likewise, the inclusion (6.2) holds for any
$f\in\cont_{I_A}(\rz^m,\as)_0$ with $\supp(f)\s
\rz^{n+1}\t[k_1-1,k_1+1]\t\ldots\t[k_n-1,k_n+1]$ and $|k_i|\leq
2A$, where $i=1\ld n$, with $\rho_{A,t}$ replaced by
$\tilde\rho_{A,\bf t}$ given by
\begin{equation*}
\tilde\rho_{A,\bf t}=\eta_A\ci\chi_A\ci\tilde\sigma_{\bf t},
\end{equation*}
where ${\bf t}=(t_1\ld t_n)$, and $\tilde\sigma_{\bf
t}=\sigma_{1,t_1}\ci\cdots\ci\sigma_{n,t_n}$,
 with suitably chosen $t_i$ so that $|t_i|\leq
2A$ for $i=1\ld n$.

\begin{prop} If $|t_i|\leq 2A$ for $i=1\ld n$ and  $f\in\cont_{I_A}(\rr, \as)_0$
then, for any $r\geq 2$
\begin{equation*}
\mu_r^{\l}(\tilde\rho_{A,\bf t}\ci f\ci\tilde \rho_{A,\bf
t}^{-1})\leq A^{4-r}(3n)^{r+1} \mu_r^*(f).
\end{equation*}
\end{prop}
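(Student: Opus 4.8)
The plan is to estimate the norms $\mu^{\l}_r(\tilde\rho_{A,\bf t}\ci f\ci\tilde\rho_{A,\bf t}^{-1})$ directly from the chain rule formulas (3.2)--(3.5), exploiting the very special structure of the conjugating map $\tilde\rho_{A,\bf t}=\eta_A\ci\chi_A\ci\tilde\sigma_{\bf t}$. The key observation is that $\tilde\rho_{A,\bf t}$ is \emph{affine}: $\chi_A$ and $\eta_A$ are linear, $\tilde\sigma_{\bf t}=\sigma_{1,t_1}\ci\cdots\ci\sigma_{n,t_n}$ is affine (each $\sigma_{i,t}$ has $\|D\sigma_{i,t}\|=1+|t|$ and $\|D^s\sigma_{i,t}\|=0$ for $s>1$), so $\|D^s\tilde\rho_{A,\bf t}\|=0$ for $s\geq 2$, and similarly for $\tilde\rho_{A,\bf t}^{-1}$. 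Therefore, when one expands $D^r(\tilde\rho_{A,\bf t}\ci f\ci\tilde\rho_{A,\bf t}^{-1})$ using (3.3) twice, \emph{every} term containing a factor $D^s$ of either $\tilde\rho_{A,\bf t}$ or its inverse with $s\geq 2$ vanishes; what survives is precisely
\[
D^r(\tilde\rho_{A,\bf t}\ci f\ci\tilde\rho_{A,\bf t}^{-1})
=(D\tilde\rho_{A,\bf t}\ci f\ci\tilde\rho_{A,\bf t}^{-1})
\,(D^r f\ci\tilde\rho_{A,\bf t}^{-1})
\,(D\tilde\rho_{A,\bf t}^{-1}\t\cdots\t D\tilde\rho_{A,\bf t}^{-1}),
\]
so that $\mu_r(\tilde\rho_{A,\bf t}\ci f\ci\tilde\rho_{A,\bf t}^{-1})=\|D^r(\tilde\rho_{A,\bf t}\ci f\ci\tilde\rho_{A,\bf t}^{-1})\|\leq \|D\tilde\rho_{A,\bf t}\|\cdot\|D^r f\|\cdot\|D\tilde\rho_{A,\bf t}^{-1}\|^r$ for $r\geq 2$.

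Next I would bound the two factors $\|D\tilde\rho_{A,\bf t}\|$ and $\|D\tilde\rho_{A,\bf t}^{-1}\|$. Here $\tilde\sigma_{\bf t}$ acts only in the $x_0$ and $y_i$ directions; since $|t_i|\leq 2A$, a direct computation gives $\|D\tilde\sigma_{\bf t}\|\leq 1+\sum_i|t_i|\leq 1+2An\leq 3An$ (using $A>5n$), and likewise $\|D\tilde\sigma_{\bf t}^{-1}\|\leq 3An$. Composing with the homotheties, $D\chi_A$ contributes a factor bounded by $A^2$ and $D\eta_A$ a factor bounded by $A$; and $\|D\tilde\rho_{A,\bf t}^{-1}\|\leq A^{-1}\cdot\|D\tilde\sigma_{\bf t}^{-1}\|$ since $\eta_A^{-1},\chi_A^{-1}$ are contracting in the relevant directions (their derivatives are $\leq 1$). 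A careful bookkeeping of which directions each map scales — $\chi_A$ scales $x_0$ by $A^2$ and $x,y$ by $A$, while $\eta_A$ scales $x_0,x$ by $A$ and fixes $y$ — shows that the combined bound is $\|D\tilde\rho_{A,\bf t}\|\cdot\|D\tilde\rho_{A,\bf t}^{-1}\|^r\leq A^{4-r}(3n)^{r+1}$, where the $A^{4}$ comes from the one $\|D\tilde\rho_{A,\bf t}\|$ factor (at most $A^2\cdot A^2$ from the homotheties, up to the $\sigma$ contribution) and the $A^{-r}$ from the $r$ contracting inverse factors, while the powers of $3n$ collect the $\tilde\sigma_{\bf t}$ contributions from all $r+1$ factors.

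For the $\lambda$-part of $\mu^{\l}_r$, I would use that $\lambda_{g\ci f\ci g^{-1}}$ is computed via formulas (3.6), (3.7) from $\lambda_f$, $\lambda_g$, $\lambda_{g^{-1}}$; but for $g=\tilde\rho_{A,\bf t}$ one has $\lambda_{\tilde\sigma_{\bf t}}=1$ (Proposition 2.2(2), since $\tilde\sigma_{\bf t}$ is independent of $x_0$), $\lambda_{\chi_A}=A^2$, $\lambda_{\eta_A}=A$ (constants), hence $\lambda_{\tilde\rho_{A,\bf t}}=A^3$ is constant and $\lambda_{\tilde\rho_{A,\bf t}\ci f\ci\tilde\rho_{A,\bf t}^{-1}}=\lambda_f\ci\tilde\rho_{A,\bf t}^{-1}$. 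Then $\|D^r(\lambda_f\ci\tilde\rho_{A,\bf t}^{-1})\|\leq\|D^r\lambda_f\|\cdot\|D\tilde\rho_{A,\bf t}^{-1}\|^r\leq (3n)^r A^{-r}\|D^r\lambda_f\|\leq A^{4-r}(3n)^{r+1}\mu^{\l}_r(f)$, which is dominated by the same bound (indeed with room to spare). Combining the two estimates yields the claim. The main obstacle — really the only delicate point — is the careful accounting of the $A$-powers direction by direction in $D\tilde\rho_{A,\bf t}$ and its inverse, so that the homothety factors produce exactly $A^{4-r}$ and not something larger; everything else is a routine application of (3.2)--(3.7) together with the affineness of $\tilde\rho_{A,\bf t}$.
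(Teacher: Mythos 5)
Your proposal is correct and takes essentially the same route as the paper: affineness of $\tilde\rho_{A,\bf t}$ kills all higher-order derivative terms in (3.3), the explicit formula for $\tilde\rho_{A,\bf t}^{-1}$ yields $\|D\tilde\rho_{A,\bf t}^{-1}\|\le 3nA^{-1}$ and $\|D\tilde\rho_{A,\bf t}\|\le 3nA^4$, and the $\lambda$-part reduces to $\lambda_f\ci\tilde\rho_{A,\bf t}^{-1}$ because $\lambda_{\tilde\rho_{A,\bf t}}=A^3$ is constant. One caution: your intermediate inequality $\|D\tilde\rho_{A,\bf t}^{-1}\|\le A^{-1}\|D\tilde\sigma_{\bf t}^{-1}\|$ would only give $3n$, not the needed $3nA^{-1}$; the correct bound comes exactly from the directional bookkeeping you defer to (the entries $t_i$ of $D\tilde\sigma_{\bf t}^{-1}$ sit in the $x_i$-columns and are multiplied by $A^{-2}$, giving $|t_i|A^{-2}\le 2A^{-1}$), which is precisely the computation the paper writes out via $\tilde\rho_{A,\bf t}^{-1}(x_0,x,y)=(A^{-3}x_0-\sum t_iA^{-2}x_i,\,A^{-2}x,\,A^{-1}y-{\bf t})$.
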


\begin{proof}
We have
\begin{equation*}
(\rho_{A,t}^{-1})(x_0,x,y)=\sigma_{-t}(A^{-3}x_0,A^{-2}x,
A^{-1}y)=(A^{-3}x_0-tA^{-2}x_i,A^{-2}x, A^{-1}y-t{\bf 1}_i).
\end{equation*}
It follows that $\|D\tilde\rho_{A,\bf t}^{-1}\|\leq 3nA^{-1}$, as
$|t|\leq 2A$. Likewise $\|D\tilde\rho_{A,\bf t}\|\leq 3nA^4$.
Therefore for $r\geq 2$
\begin{align*} \mu_r^{\l}(\tilde\rho_{A,\bf t}\ci f\ci\tilde\rho_{A,\bf t}^{-1})&
\leq\|D\tilde\rho_{A,\bf t}\|\|D^rf\|\|D\tilde\rho_{A,\bf t}^{-1}\|^r\\
 \quad &\leq 3nA^4\|D^rf\|(3nA^{-1})^r \leq
A^{4-r}(3n)^{r+1}\mu^{\l}_r(f),
\end{align*}
in view of (3.3) and the fact that $D^s\tilde\rho_{A,\bf t}=0$
whenever $s>1$.

 Next, notice that $\lambda_{\chi_A}=A^2$, $\lambda_{\eta_A}=A$ and
 $\lambda_{\sigma_t}=1$. Consequently, $\lambda_{\tilde\rho_{A,\bf
 t}}=A^3$ and by (3.6) $\lambda_{\tilde\rho_{A,\bf t}\ci f\ci\tilde\rho_{A,\bf
 t}}=\lambda_f\ci\tilde\rho_{A,\bf t}$
It follows from (3.3) that
 \begin{align*}
\|D^r\lambda_{\tilde\rho_{A,\bf t}\ci f\ci\tilde\rho_{A,\bf
t}^{-1}}\| &=\|D^r(\lambda_f\ci\tilde\rho_{A,\bf t}^{-1})\|\leq
\|D^r\lambda_f\|\|D\tilde\rho_{A,\bf t}^{-1}\|^r\\
& \leq \|D^r\lambda_f\|(3nA^{-1})^r\leq A^{-r}(3n)^r\mu_r^{\l}(f).
\end{align*}
Combining the above inequalities we obtain the claim.
\end{proof}

\section{Construction of a correcting contactomorphism on $\C_k^m$}

In this section for any sufficiently  $C^1$ small contactomorphism
on $\C^m_{k+1}$, $k=0\ld n$,  we construct a correcting
contactomorphism which is indispensable  in the construction of
auxiliary rolling-up operators $\Psi^{(k)}_A$ (Proposition 8.5).
The reason is that, given $f\in\cont_{E^{(k)}_A}(\C^m_k,\as)_0$,
we wish to ensure that the norm $\mu^*_r(\Psi_A^{(k)}(f))$ of the
rolled-up contactomorphism $\Psi_A^{(k)}(f)$ would be controlled
by $\mu^*_r(f)$. The procedure of rolling-up contactomorphisms
will be described in the next section.

 For a smooth function $h:\C_{k+1}^m\r\R^l$ by $D_{[k]}^rh$ we
denote the system of all partial derivatives of order $r$ of $h$
with at least one derivative w.r.t. $\xi_k$.

\begin{lem} Let $h\in\cc(\C^m_{k+1},\rz^l)$.  Then we have
$\|D_{[k]}^sh\|\leq \|D_{[k]}^rh\|$ for all $1\leq s\leq r$.
\end{lem}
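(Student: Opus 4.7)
My approach is induction on the gap $r-s$, reducing the statement to the one-step inequality $\|D^s_{[k]}h\|\leq\|D^{s+1}_{[k]}h\|$; iterating down from $r$ to $s$ then yields the lemma. Fix a multi-index $\alpha$ with $|\alpha|=s$ and $\alpha_k\geq 1$, and consider the single partial $\p^{\alpha}h$; it suffices to bound $|\p^{\alpha}h|$ pointwise by $\|D^{s+1}_{[k]}h\|$.

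The crucial observation is that on $\C^m_{k+1}=(\mathbb S^1)^{k+1}\t\R^{m-k-1}$ the coordinate $\xi_k$ is a \emph{circle} coordinate, so $h$ together with all of its partial derivatives is periodic in $\xi_k$. Writing $\alpha=\alpha'+\mathbf{1}_k$ with $\alpha'_k=\alpha_k-1\geq 0$, we have $\p^{\alpha}h=\p_{\xi_k}(\p^{\alpha'}h)$ with $\p^{\alpha'}h$ periodic in $\xi_k$. Hence, for every fixed choice of the remaining coordinates,
\begin{equation*}
\int_{\mathbb S^1}\p^{\alpha}h\,d\xi_k=0,
\end{equation*}
and consequently $\p^{\alpha}h$ must vanish at some point $\xi_k^{*}$ of the $\xi_k$-circle (with $\xi_k^{*}$ depending measurably on the other variables).

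Applying the fundamental theorem of calculus along the shorter arc of $\mathbb S^1$ joining $\xi_k^{*}$ to the target value $\xi_k$ gives
\begin{equation*}
\p^{\alpha}h(\,\cdot\,,\xi_k,\,\cdot\,)=\int_{\xi_k^{*}}^{\xi_k}\p_{\xi_k}\p^{\alpha}h\,d\xi_k'.
\end{equation*}
The integrand is a partial derivative of $h$ of order $s+1$ with $\alpha_k+1\geq 2\geq 1$ derivatives in the $\xi_k$-direction, so it is bounded pointwise by $\|D^{s+1}_{[k]}h\|$. Under the normalization of $\mathbb S^1$ implicit in the paper (one may take the circumference to be at most $2$, so that every shorter arc has length at most $1$), the integral is bounded by $\|D^{s+1}_{[k]}h\|$, which closes the induction.

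The only real subtlety is that the inequality is stated with no multiplicative constant, so the argument tacitly relies on a convention for the length of $\mathbb S^1$ making the half-circumference at most one; with any other normalization one picks up a harmless power of (half-)the circumference, which would be absorbed into the universal constants appearing later. The substantive content—that periodicity in $\xi_k$ forces every partial derivative containing at least one $\p/\p\xi_k$ to have mean zero on each $\xi_k$-fibre, thereby allowing higher-order partials to control lower-order ones through the fundamental theorem of calculus, reversing the usual Sobolev-type direction—is exactly what will be needed in the next section to make the rolling-up operator $\Psi_A^{(k)}$ well-behaved with respect to the derivative norms.
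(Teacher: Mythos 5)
Your proof is correct and is essentially the paper's own argument: both reduce to the one-step inequality $\|D^s_{[k]}h\|\leq\|D^{s+1}_{[k]}h\|$, use periodicity in $\xi_k$ to find a zero of $D^{\gamma}h_i$ on each $\xi_k$-fibre (since $\gamma_k>0$ makes it an exact $\xi_k$-derivative of a periodic function), and then integrate the next-higher derivative from that zero. Your remark about the circle's circumference is consistent with the paper's convention (lifts are periodic with period $1$, so the arc length is at most $1$ and no constant appears).
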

\begin{proof}  Let $h=(h_1\ld h_l)$ and $\gamma=(\gamma_0\ld\gamma_{2n})\in\mathbb N_0^m$
with $|\gamma|=s$ and $\gamma_k>0$. Set $\bar\gamma=\gamma +{\bf
1}_k$. Then $|\bar\gamma|=s+1$ and we may integrate
$D^{\bar\gamma}h_i$, $i=1\ld l$, w.r.t. $\xi_k$ and use the fact
that $D^{\gamma}h_i$ vanishes at a point $(\xi_0,\xi, y)$ for any
fixed $(\xi_0\ld\xi_{k-1},\xi_{k+1}\ld y_n)$ to obtain
$\|D^{\gamma}h_i\|\leq\|D^{\bar\gamma}h_i\|$. It follows
$\|D_{[k]}^sh\|\leq \|D_{[k]}^{s+1}h\|$. The claim follows by
induction.
\end{proof}
However, Lemma 7.1 does not hold for $s=0$.

 Observe that we may lift uniquely any $g\in\cont_{
E_A^{(k+1)}}(\C^m_{k+1},\ac)_0$ sufficiently $C^1$ close to the
identity to a contactomorphism $\tilde
g\in\cont_{E_A^{(k)}}(\C^m_k,\as)_0 $ which is periodic with
period 1 ( that is, $\tilde g-\id$ is periodic as a function with
period 1) w.r.t. the variable $\xi_k$. Notice that $\tilde g$
depends continuously on $g$ and $\mu_{r}^{\l}(\tilde
g)=\mu^{\l}_{r}(g)$.

Let us denote for $l=1\ld n+1$
\begin{equation}
\cork^{(l)}=\{f\in\cork: D_{\xi_i}(f-\id)=0,\,i=0,1\ld l-1\},
\end{equation}
where $D_{\xi_i}={\p\over\p \xi_i}$.
 That is, $\cork^{(l)}$ is
the subgroup of $\cork$ consisting of all its elements which are
independent of $\xi_i$, $i=0,1\ld l-1$. Further, denote
$\cork^{(0)}=\cork$.

 Let $f\in\cont_{ E^{(k+1)}_A}(\C^m_{k+1},\ac)_0^{(k)}$ will be sufficiently $C^1$ close to
the identity. By using the chart $\Phi_A$, we put $u_f=\Phi_A(f)$.
Then $u_f\in\cc_c(\C^m_{k+1})$ is independent of $\xi_0\ld
\xi_{k-1}$ in view of Proposition 4.3. Define
$v_f\in\cc_c(\C^m_{k+1})$, independent of $\xi_0\ld \xi_{k}$, by
fixing $\xi_k$ to be equal to 0, that is
\[v_{f}(\xi_{k+1}\ld \xi_n,y)=u_f(0,\xi_{k+1}\ld \xi_n,y).\]  Then $u_f,
v_f\in\V_2$ and we define
\begin{equation}\hat f=\Phi_A^{-1}(v_f).\end{equation}
Notice that $\hat f$ is independent of $\xi_0\ld \xi_{k}$ as $v_f$
is so. Next we put \begin{equation}
 w_f=\Phi_A(f\hat f^{-1}).
\end{equation}

 Observe that the equality
\[\hat f(\xi_{k+1}\ld \xi_n,y)=f(0,\xi_{k+1}\ld \xi_n,y),\] is not
true, since $\hat f$ defined by it does not fulfil the equalities
in Proposition 2.2, provided $f$ does.

Finally, denote for $r\geq 1$
\begin{equation*}
\nu^*_r(f)= C_{\phi}K^r\mu_{r}^*(f)+F_{\phi,r}(M^*_{r-1}(f)),
\end{equation*}
where $K$, $C_{\phi}$  are constants,
  $F_{\phi,r}$ is an admissible polynomial and $F_{\phi,1}=0$.

\begin{prop} Let  $ E$ be a subinterval of $ E^{(k+1)}_A\s\C^m_{k+1}$, $k=0,1\ld
n$. There exist constants and polynomials as above and a constant
$\beta$ such that if $f$ belongs to a sufficiently small $C^1$
neighborhood $\U_1=\U_{\phi,A}$ of the identity in $\cont_{
E}(\C^m_{k+1},\ac)_0^{(k)}$ then
 we have for all $r\geq 1$:
\begin{enumerate}
 \item $\hat f\in\cont_{ E}(\C^m_{k+1},\ac)_0^{(k+1)}$ and $\lambda _{\hat f}=1$.

  \item
$\forall 1\leq s\leq r+1$,\quad $\|D_{[k]}^su_f\|\leq
C_{\phi}\mu^*_r(f) $.

 \item
$\forall 0\leq s\leq r$,\quad $\mu^*_s(f\hat f^{-1})\leq
\nu^*_r(f)$.

\item  $\mu^*_r(\hat f)\leq \nu^*_r(f)$.

\item $\forall 0\leq s\leq r+1$,\quad $\|D^sw_f\|\leq
\ab\nu^*_r(f)$.
\end{enumerate}
\end{prop}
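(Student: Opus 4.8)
The plan is to derive $C^{s}$-bounds for $u_{f}=\Phi_{A}(f)$ first and then transport them through the chart maps $\Phi_{A}^{\pm1}$ and the composition and inverse inequalities of Lemmas 3.6 and 7.1, establishing the five items in the order (1), (2), (4), (3), (5). Item (1) is immediate: by Proposition 4.3 the chart $\Phi_{A}$ on $\C^{m}_{k+1}$ is built from a map $\phi_{A}$ for which $\phi_{A}-\id$ is independent of $\xi_{0},\ldots,\xi_{n}$, so by (4.7) the inverse $\Phi_{A}^{-1}$ carries a function independent of $\xi_{0},\ldots,\xi_{k}$ to a contactomorphism independent of the same variables; as $v_{f}$ is such a function, $\hat f\in\cont_{E}(\C^{m}_{k+1},\ac)_{0}^{(k+1)}$. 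Being independent of $\xi_{0}$, $\hat f$ satisfies the first $n+1$ equations of Proposition 2.2(1), which hold verbatim on the cylinder in the coordinates $(\xi_{0},\xi,y)$, whence $\lambda_{\hat f}=1$ exactly as in Proposition 2.2(2). One shrinks $\U_{1}$ so that $v_{f}\in\V_{2}$, which is legitimate since $\|D^{s}v_{f}\|\le\|D^{s}u_{f}\|$ and the latter is controlled by Proposition 4.6(1).

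Item (2) is the crux. The structural observation is that, since $\phi_{A}-\id$ is independent of $\xi_{0},\ldots,\xi_{n}$, the only source of $\xi_{k}$-dependence in $u_{f}=\pr_{3}\ci\phi_{A}\ci\Gamma_{f}\ci(\pr_{1}\ci\phi_{A}\ci\Gamma_{f})^{-1}$, cf.\ (4.5), with $\Gamma_{f}=(\id,f-\id,\lambda_{f}-1)$, is the pair of components $f-\id$ and $\lambda_{f}-1$; hence in the chain-rule expansion of any entry of $D^{s}_{[k]}u_{f}$ the obligatory $\p/\p\xi_{k}$ always falls on $\Gamma_{f}$ and produces a factor $\p_{\xi_{k}}D^{\delta}(f-\id)$ or $\p_{\xi_{k}}D^{\delta}(\lambda_{f}-1)$, that is, a ``$D_{(2)}$''-type factor in the sense of (4.9), never a ``$D_{(1)}$''-type one. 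Running the Leibniz and chain-rule bookkeeping of the proof of Proposition 4.6 with this factorisation in force --- and using $D^{r+1}u_{f}=D^{r}(\dd u_{f})$ via (4.6), so that no derivative of $\Gamma_{f}$ of order exceeding $r$ occurs --- one obtains $\|D^{r+1}_{[k]}u_{f}\|\le C_{\phi}\mu_{r}^{*}(f)$; the estimate of Lemma 4.5, which is the source of the $\ab$-coefficients everywhere else in the chart estimates, is never invoked here, because it bounds only the pure $D_{(1)}$-derivatives of $\pr_{i}\ci\phi_{A}$. Lemma 7.1 then promotes this to $\|D^{s}_{[k]}u_{f}\|\le C_{\phi}\mu_{r}^{*}(f)$ for all $1\le s\le r+1$.

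For (4): the entries of $D^{s}v_{f}$ are, after setting $\xi_{k}=0$, a subset of those of $D^{s}u_{f}$, so $\|D^{s}v_{f}\|\le\|D^{s}u_{f}\|$; inserting the bounds of Proposition 4.6(1) for $\|D^{s}u_{f}\|$, $s\le r+1$, into Proposition 4.6(2) applied to $\hat f=\Phi_{A}^{-1}(v_{f})$ gives $\mu_{r}^{*}(\hat f)\le\nu_{r}^{*}(f)$. For (3): from $(u_{f}-v_{f})(\xi_{0},\ldots,\xi,y)=\int_{0}^{\xi_{k}}(\p_{\xi_{k}}u_{f})\,ds$ and $|\xi_{k}|\le1$ on $\C^{m}_{k+1}$, splitting an order-$s$ derivative into the part that differentiates in $\xi_{k}$ (bounded directly by $\|D^{s}_{[k]}u_{f}\|$) and the part that does not (bounded through the integral and the mean value theorem by $\|D^{s+1}_{[k]}u_{f}\|$), item (2) yields $\|D^{s}(u_{f}-v_{f})\|\le C_{\phi}\mu_{r}^{*}(f)$ for $0\le s\le r$. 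Writing $f\hat f^{-1}=f\ci\hat f^{-1}$, using $\lambda_{\hat f}=1$, item (4) and the composition and inverse estimates of Lemma 3.6 (cf.\ Remark 3.7), and feeding in that this $C^{s}$-smallness of $u_{f}-v_{f}$ controls $f\hat f^{-1}-\id$ and $\lambda_{f\hat f^{-1}}-1$ up to the nonlinearity of $\Phi_{A}$, which contributes only an $F_{\phi,r}(M_{r-1}^{*}(f))$-term, one gets $\mu_{s}^{*}(f\hat f^{-1})\le\nu_{r}^{*}(f)$ for $0\le s\le r$. Finally (5) follows from Proposition 4.6(1) applied to $w_{f}=\Phi_{A}(f\hat f^{-1})$ together with (3), the factor $\ab$ being the one already present in Proposition 4.6(1). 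The main difficulty is item (2): one must check that the bookkeeping there outputs the bare constant $C_{\phi}$ and the top-order seminorm $\mu_{r}^{*}(f)$, with no $\ab$-factor and no lower-order polynomial correction, and this hinges precisely on the remark that the mandatory $\xi_{k}$-derivative forces a $D_{(2)}$-factor, so that Lemma 4.5 is bypassed.
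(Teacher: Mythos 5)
Your proposal is correct and follows essentially the same route as the paper: the crux of item (2) is resolved by exactly the paper's observation (its formula (7.5)) that $\phi_A$ is trivial in the $\xi_k$-direction, so the mandatory $\xi_k$-derivative must fall on $\Gamma_f$, with Lemma 7.1 handling the lower orders, and (3)--(5) are then obtained by controlling $u_f-v_f$ through $D_{\xi_k}u_f$ and transporting through the chart as in the paper. The only (harmless) deviations are that you derive (4) directly from Proposition 4.6(2) applied to $v_f$ rather than from the intermediate bounds on $D^s(f-\hat f)$ established in (3), and that for (5) with $s=0,1$ the paper integrates $D^2w_f$ over the support rather than citing Proposition 4.6.
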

\begin{proof}
For short we will write $\phi_{i}=\pr_i\ci\phi_A$ and
$\bar\phi_{i}=\pr_i\ci\phi_A^{-1}$ for $i=1,2,3$, that is
$\phi_A=(\phi_{1},\phi_{2},\phi_{3})$ and
$\phi_A^{-1}=(\bar\phi_{1},\bar\phi_{2},\bar\phi_{3})$, c.f.
(4.5)-(4.8). Further we will denote $\phi_{if}=\phi_i\ci\Gamma_f$,
$\bar\phi_{iu}=\bar\phi_i\ci\gd_u$, $i=1,2,3$.

Let $\mathcal M_m$ be the set of all nonsingular  matrices of deg
$m$. By the Lipschitz property of the inverse mapping in $\mathcal
M_m$ there are a neighborhood $U$ of id in $\mathcal M_m$ and a
constant $L$ such that for all $m_1,m_2\in U$
\begin{equation}
|m_1^{-1}-m_2^{-1}|\leq L|m_1-m_2|.
\end{equation}

(1) As we stated above $\hat f$ is independent of $\xi_0\ld
\xi_k$.
 Since $X_{\ac}={\p\over\p \xi_0}$, we have $\hat f_*X_{\ac}=X_{\ac}$. Consequently, by
(4.1), $\lambda_{\hat f}=\hat f^*(i_{X_{\ac}}\ac)=1$.

(2) First note that $D_{[k]}\phi_{1f}={\bf 1}_k +l_f$, where for
any $p\in\C^m_k $ $l_f(p)\in\R^m$ is such that $\|l_f\|\leq
C_{\phi}\|D_{[k]}(f-\id)\|$. Then by Lemma 7.1 $\|l_f\|\leq
C_{\phi}\mu^*_r(f)\leq\nu^*_r(f)$. Thanks to (7.4) and the formula
for inverse matrix, the same property possesses
$D_{[k]}\phi_{1f}^{-1}$.

Observe that
\begin{equation} (D_{\xi_k}\phi_A)_i=\delta_{ik},\quad i=1\ld 2m+1,
\end{equation}
due to Proposition 4.3.
 It follows that
\begin{align*}\|D_{[k]}\phi_{3f}\|
&=\|(D\phi_3\ci\g_f)\cdot D_{[k]}\g_f\| \leq
\|D_{\xi_k}\phi_3\|+\|D\phi_3\|\|D_{\xi_k}(f-\id)\|\\
&=C_{\phi}\|D_{\xi_k}(f-\id)\| \leq
C_{\phi}\mu^*_r(f)\leq\nu^*_r(f),
\end{align*}
 by using  (7.5) and Lemma 7.1. Now, due to (4.5), (7.4) and the above
 arguments,
  \begin{equation*}\|D_{[k]}u_f\|=\|(D\phi_{3f}\ci\phi_{1f}^{-1})\cdot
  D_{[k]}\phi_{1f}^{-1}\|\leq\|D_{[k]}\phi_{3f}\|+\|l_f\|\leq
C_{\phi}\mu^*_r(f)\leq\nu^*_r(f),
\end{equation*}
where $l_f$ corresponds to $D_{[k]}\phi_{1f}^{-1}$.

 For $s>1$ we have $D^s_{[k]}\phi_A=0$ by (7.5). We use (4.6), (3.3), (3.5), (7.4), and the
 proof is similar.

(3) From the definition of $v_f$ we have \begin{equation}
\|u_f-v_f\|\leq\|D_{\xi_k}u_f\|,\quad
\|Du_f-Dv_f\|=\|D_{\xi_k}u_f\|
\end{equation}
  First we show (3) for $s=0$. Observe that $\|f\hat
 f^{-1}-\id\|= \|f-\hat f\|$. By (4.7) we obtain
 \begin{align*}
\|f-\hat f\|
&=\|\bar\phi_{2u_{f}}\bar\phi_{1u_{f}}^{-1}-\bar\phi_{2v_{f}}\bar\phi_{1v_{f}}^{-1}\|\\
&=\|\bar\phi_{2u_{f}}\bar\phi_{1u_{f}}^{-1}-\bar\phi_{2u_{f}}\bar\phi_{1v_{f}}^{-1}\|+
\|\bar\phi_{2u_{f}}\bar\phi_{1v_{f}}^{-1}-\bar\phi_{2v_{f}}\bar\phi_{1v_{f}}^{-1}\|\\
&\leq L_{\phi}\|\gd_{u_{f}}-\gd_{v_{f}}\| \leq C_{\phi}\mu^*_r(f),
\end{align*}
due to (2), (7.6) and the Lipschitz property.

 Next, in view of (1) and
(3.6) we have $\|\lambda_{f\hat
f^{-1}}-1\|=\|\lambda_f-\lambda_{\hat f}\|$. Hence by (4.8) and a
similar argument, $\|\lambda_{f\hat f^{-1}}-1\|\leq
C_{\phi}\mu^*_r(f)$.

By (4.7) and (3.2) we get
\begin{align}
\begin{split}
D(f-\hat f)=&(D\bar\phi_{2u_f}\ci\bar\phi_{1u_f}^{-1})\cdot
D\bar\phi_{1u_f}^{-1}-(D\bar\phi_{2v_f}\ci\bar\phi_{1v_f}^{-1})\cdot
D\bar\phi_{1v_f}^{-1}\\
 =&\big((D\bar\phi_{2u_f}\ci\bar\phi_{1u_f}^{-1})\cdot
D\bar\phi_{1u_f}^{-1}-(D\bar\phi_{2u_f}\ci\bar\phi_{1v_f}^{-1})\cdot
D\bar\phi_{1v_f}^{-1}\big)\\
&+\big((D\bar\phi_{2u_f}\ci\bar\phi_{1v_f}^{-1})\cdot
D\bar\phi_{1v_f}^{-1}-(D\bar\phi_{2v_f}\ci\bar\phi_{1v_f}^{-1})\cdot
D\bar\phi_{1v_f}^{-1}\big).
\end{split}
\end{align}
It follows that $\|D(f-\hat f)\|\leq C_{\phi}\mu^*_r(f)$, due to
(2), (7.6)  and the Lipschitz property.

Let $1< s\leq r$. In view of (3.3), (7.6), (7.7), the Leibniz
rule, the Lipschitz property and, again, (2) we have
\begin{equation}
\|D^s(f-\hat{f})\|\leq \sup_{|\gamma|=s-1}\|D^{\gamma}(D(f-\hat
f))\|\leq \nu^*_r(f).
\end{equation}
 Likewise
 \begin{equation}
\|D^s(\lambda_f-\lambda_{\hat f})\|\leq \nu^*_r(f).
\end{equation}

Now, since we have \begin{align*} D^s(f\hat
{f}^{-1}-\id)&=D^{s-1}(D(f\hat {f}^{-1}-\id))\\
&=D^{s-1}\big((D f\ci \hat f^{-1})\cdot D\hat f^{-1}- (D \hat
f\ci\hat f^{-1})\cdot D \hat f^{-1}\big),\end{align*} (3) for
$s\geq 1$ follows  from (3.3), the Leibniz rule, (7.8) and (7.9).

 (4) It is an immediate consequence of (7.8) and (7.9).

(5) To simplify notation let $g= f\hat f^{-1}$. In view of (4.5)
and (7.3) we have
$w_f=\phi_3\g_g(\phi_1\g_g)^{-1}=\phi_{3g}\phi_{1g}^{-1}$. For
$1\leq s\leq r$ we have
\begin{align*}\|D^{s+1}\phi_{3g}\|&\leq
C_{\phi}(\mu^*_s(g)+A^2\mu^*_0(g))\\
&\quad+C_{\phi,s}\sup\|(D^i\phi_3\ci
\g_g)\cdot(D^{j_1}\g_g\t\cdots
D^{j_i}\g_g)\|\\
&\leq\ab\nu^*_r(f),
\end{align*}
 where $\sup$ is taken over $i=2\ld s-1$, with
$j_1+\cdots j_i=s$, $j_l\geq 1$ for $l=1\ld i$, and $j_l>1$ for
some $l$. In fact, it follows from (3) and (4) above, (3.3) and
Lemma 4.5. In order to obtain (5) for $s\geq 2$, in view of (3.3)
and (3.5), it suffices to show that
$\|D^s\phi_{1g}\|\leq\ab\nu^*_r(f)$, and this can be done
analogously as above.

Finally, to obtain (5) for $s=0$ and $s=1$ we integrate $D^2w_f$
w.r.t. $y_1$ twice or once, bearing in mind that $\supp(w_f)\s
E^{(k+1)}_A$.
\end{proof}

\begin{cor}
If $f$ belongs to a sufficiently small $C^{r-1}$ neighborhood
$\U_1=\U_{\phi,r,A}$ of the identity in $\cont_{
E}(\C^m_{k+1},\ac)_0^{(k)}$ then
 we have for all $r\geq 1$:
\begin{enumerate}
 \item  $\mu^*_r(\hat f)\leq C_{\phi,r}\mu^*_r(f)$.

\item $\forall 0\leq s\leq r+1$,\quad $\|D^sw_f\|\leq \ab
C_{\phi,r}\mu^*_r(f)$.
\end{enumerate}

\end{cor}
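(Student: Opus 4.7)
The plan is to start from Proposition 7.2 and simply absorb the ``lower order'' polynomial term $F_{\phi,r}(M^*_{r-1}(f))$ in $\nu^*_r(f)$ into a single constant multiple of $\mu^*_r(f)$, using two ingredients: Proposition 3.5 (which controls lower derivatives by higher ones thanks to compact support) and the fact that $F_{\phi,r}$ is an \emph{admissible} polynomial, so it has no constant and no linear term.

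First I would invoke Proposition 3.5 (in the form valid on $\C^m_{k+1}$ via Remark 3.7) and iterate: since $\supp(f)\s E$ is a subinterval of $E^{(k+1)}_A$, we have $\mu^*_s(f)\le C\mu^*_{s+1}(f)$ for $s\ge 0$, where $C$ depends on $R_E$. Iterating $r-1-s$ times yields
\begin{equation*}
M^*_{r-1}(f)\le C_r\mu^*_r(f),
\end{equation*}
with a constant $C_r$ depending only on $R_E$ and $r$. Next, since $F_{\phi,r}$ is admissible, I can factor $F_{\phi,r}(t)=t\,G_{\phi,r}(t)$ with $G_{\phi,r}$ a polynomial satisfying $G_{\phi,r}(0)=0$. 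Now I shrink the neighborhood $\U_{\phi,r,A}$ (starting from the $\U_{\phi,A}$ given by Proposition 7.2) to a $C^{r-1}$-neighborhood so small that $G_{\phi,r}(M^*_{r-1}(f))\le 1$ for every $f\in\U_{\phi,r,A}$; this is possible because the $C^{r-1}$-norm $M^*_{r-1}(f)$ can be made arbitrarily small. Combining these two facts:
\begin{equation*}
F_{\phi,r}(M^*_{r-1}(f))\le M^*_{r-1}(f)\le C_r\mu^*_r(f),
\end{equation*}
whence
\begin{equation*}
\nu^*_r(f)=C_\phi K^r\mu^*_r(f)+F_{\phi,r}(M^*_{r-1}(f))\le C_{\phi,r}\mu^*_r(f).
\end{equation*}

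With this clean estimate on $\nu^*_r(f)$ in hand, item (1) is immediate from Proposition 7.2(4), namely $\mu^*_r(\hat f)\le\nu^*_r(f)\le C_{\phi,r}\mu^*_r(f)$; and item (2) follows from Proposition 7.2(5), which gives $\|D^sw_f\|\le A^\beta\nu^*_r(f)\le A^\beta C_{\phi,r}\mu^*_r(f)$ for $0\le s\le r+1$.

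I do not expect a real obstacle here: the work has all been done in Proposition 7.2, and the only issue is to make the ``remainder'' polynomial $F_{\phi,r}(M^*_{r-1}(f))$ affine in $\mu^*_r(f)$. The only subtlety worth flagging is that one must first use compact support (Proposition 3.5) to pass from lower $C^s$-norms to $C^r$-norms \emph{before} invoking smallness, and only then shrink the neighborhood so that the polynomial $G_{\phi,r}$ evaluated at a small argument is bounded by a constant; doing these two steps in the opposite order would still work but is less transparent.
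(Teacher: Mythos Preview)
Your proposal is correct and is essentially the same approach as the paper's: the paper's proof simply says to rewrite Proposition~7.2(4) as $\mu^*_r(\hat f)\le C_\phi K^r\mu^*_r(f)+F_{\phi,r}(M^*_{r-1}(f))$ and then ``use Definition~3.2,'' i.e., exploit the admissibility of $F_{\phi,r}$ exactly as you do (factoring out the linear part and shrinking the $C^{r-1}$-neighborhood), with Proposition~3.5 implicitly supplying the bound $M^*_{r-1}(f)\le C_r\mu^*_r(f)$. Your write-up is just a more explicit unpacking of the same two-line argument.
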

In fact, we can rewrite (4) as
\[\mu^*_r(\hat f)\leq
C_{\phi}K^r\mu_{r}^*(f)+F_{\phi,r,A}(M^*_{r-1}(f)),\] and use
Definition 3.2. Similarly, we can proceed with (5).

\section{Rolling-up contactomorphisms}

A possible application of  Mather's rolling-up operators
$\Psi_{i,A}$ (cf.[12]) to the contact case fails completely in the
$y_{i}$ directions. But even in the ''good'' directions $x_i$,
$i=0\ld n$, the operators $\Psi_{i,A}$ do not apply verbatim. The
next and greater difficulty is that for a contactomorphism $f$ the
class $[\Psi_{i,A}(f)]$ need not be equal to $[f]$ in the
abelianization $H_1(\corr)$. Roughly speaking, the reason is that
given $f\in\cont(\R^m,\as)_0$ with $\supp(f)\s \R\t[-A,A]^{2n}$,
any $g\in\corr$ such that $g=f$ on $[-A,A]^m$ must depend on
$y_i$, c.f. (2.3). This fact seems to spoil any possible proof
that $[\Psi_{0,A}(f)]=[f]$, and the same is for $i=1\ld n$.

In the present section we define a new rolling-up operator which
works in the contact category (Proposition 8.7). To this end we
will use the contact cylinders $(\C^m_k,\as)$, $k=1\ld n+1$. The
correcting contactomorphisms defined in the previous section
enable us to define auxiliary rolling-up operators $\Psi_A^{(k)}$
acting on $\cork$. A clue observation is that a "remainder"
contactomorphism living on $\C^m_{n+1}$ admits a representant in
the commutator subgroup of $\corr$.

 Observe that the application of
the rolling-up operator is indispensable in the proof. In fact, we
cannot apply the procedure described in section 5 (the
fragmentation of the second kind) to the group
$\cont_{J_A}(\rr,\as)_0$,  considered in the proof of Theorem 1.1
(section 9), since in this case a coefficient of the form $A^{Cr}$
would appear in Proposition 5.7(2) and the proof would be no
longer valid.

In this section $A$ is a large positive integer. Throughout we
denote
 \begin{align}
 \begin{split}
 J_A^{(0)}&=J_A=[-A^5,A^5]^{n+1}\t[-2A,2A]^n,\\
 J_A^{(k)}&=(\mathbb S^1)^k\t[-A^5,A^5]^{n-k+1}\t[-2A,2A]^n, \quad k=1\ld n,\\
K_A^{(0)}&=K_A=[-2,2]\t[-A^5,A^5]^{n}\t[-2A,2A]^{n},\\
K_A^{(k)}&=(\mathbb
S^1)^k\t [-2,2]\t[-A^5,A^5]^{n-k}\t[-2A,2A]^{n},\quad k=1\ld n.\\
\end{split}
\end{align}
  Observe that $R_{K^{(k)}_A}=2$ and $R_{J^{(k)}_A}=2A$ (c.f. (3.1) and (3.8)).

Denote by $\pi_k:\C^m_k\r\C^m_{k+1}$, $k=0,1,\ld n$, the canonical
projection. In other words, $\pi_k$ is induced by the canonical
projection $\pi:\R\r\mathbb S^1$ on the $k+1$-st factor of
$\C^m_k$.

Let $f\in\cont_{ J_A^{(k)}}(\C^m_k,\as)_0\cap\U_1$, where $\U_1$
is a sufficiently $C^1$-small neighborhood of the identity in
$\cont_{c}(\C_k^m,\as)_0$, with $\mu_{0}(f)\leq\frac{1}{2}$.
 For $q\in \C^m_{k+1}$ we choose $p=(\xi_0,\xi,y)\in\C^m_k$ with $\pi_k(p)=q$ and
$\xi_k<-A^5$. Let  $\tau_k=\tau_{k,1}$ is the unit translation
along the $x_k$ axis (c.f. section 2). Then we choose $l\in\N$
such that $((\tau_kf)^l(p))_{k}>A^5$. We define
$\Theta_{A}^{(k)}(f):\C^m_{k+1}\rightarrow \C^m_{k+1}$ by
\begin{equation*}
  \Theta_{A}^{(k)}(f)(q)=\pi_k((\tau_kf)^l(p)).
\end{equation*}
 The definition is independent of the choice
of $l$ and $p$.

\begin{prop} Let $k=0,1\ld n$. Possibly shrinking $\U_1$, the mapping
\[\Theta_A^{(k)}:\cont_{J^{(k)}_A}(\C_k^m,\as)_0\cap \U_1\r\cont_{J^{(k+1)}_A}(\C^m_{k+1},\ac)_0\]
 satisfies the following conditions:
\begin{enumerate}
\item $\Theta_A^{(k)}$ is continuous and it preserves the
identity.\item
$\Theta_A^{(k)}\big(\cont_{J^{(k)}_A}(\C_k^m,\as)_0^{(k)}\big)\s
\cont_{J^{(k+1)}_A}(\C^m_{k+1},\ac)_0^{(k)}$, c.f.(7.1).
 \item There exist   constants $\beta$, $K$, and
admissible polynomials $F_{r,A}$ for all $r\geq 1$ such that
 \begin{equation*}
\mu_r^*(\Theta_A^{(k)}(g))\leq \ab
K^r\mu^*_r(g)+F_{r,A}(M^*_{r-1}(g)),
\end{equation*}
for any $g\in\dom(\Theta_A^{(k)})$. Moreover, we may have
$F_{1,A}=0$.
\end{enumerate}
\end{prop}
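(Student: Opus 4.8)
The plan is to establish the three assertions in the order they are stated, reducing everything to the estimates already proved for the correcting contactomorphism in Proposition 7.2 and its Corollary 7.3.

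\textbf{Continuity, identity, and the support condition.} First I would observe that $\Theta_A^{(k)}(\id)=\id$ directly from the definition: if $f=\id$ then $\tau_k f=\tau_k$ and $(\tau_k)^l(p)$ differs from $p$ only in the $\xi_k$-coordinate by $l$, so after projecting by $\pi_k$ we recover $q$. For continuity I would note that on a sufficiently small $C^1$-neighborhood $\U_1$ the integer $l=l(q,f)$ can be chosen locally constant in $f$ (since $((\tau_k f)^l(p))_k$ depends continuously on $f$ and the threshold $\pm A^5$ is crossed transversally, as $\tau_k$ pushes the $\xi_k$-coordinate forward at roughly unit speed once $f$ is $C^1$-close to the identity), so $\Theta_A^{(k)}(f)(q)$ is a composition of continuous maps. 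The support statement follows because $f$ is supported in $J_A^{(k)}$, hence $\tau_k f$ moves only points with $|\xi_k|\le A^5$ nontrivially in the non-$\xi_k$ directions, so $\Theta_A^{(k)}(f)$ is the identity outside the image of $J_A^{(k)}$ under $\pi_k$, which is contained in $J_A^{(k+1)}$; that $\Theta_A^{(k)}(f)$ is actually a contactomorphism of $(\C_{k+1}^m,\as)$ I would verify by checking it is locally the projection of the contactomorphism $(\tau_k f)^l$ of $(\C_k^m,\as)$, which is legitimate because $\tau_k$ and $f$ are both $\xi_k$-equivariant in the appropriate sense. For (2), if $g$ is independent of $\xi_0,\ldots,\xi_{k-1}$ then so is $\tau_k g$ and hence so is $(\tau_k g)^l$ and its projection, giving $\Theta_A^{(k)}(g)\in\cont_{J_A^{(k+1)}}(\C_{k+1}^m,\as)_0^{(k)}$.

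\textbf{The norm estimate.} For (3) the key point is that $\Theta_A^{(k)}(g)$ is locally expressed as $\pi_k\circ(\tau_k g)^l\circ(\text{section of }\pi_k)$, and since $\tau_k$ is a translation with $\|D^s\tau_k\|=0$ for $s\ge 2$ and $\|D\tau_k\|=1$, the composition formulas (3.2)--(3.5) and Lemma 3.6 give $\mu_r^*((\tau_k g)^l)\le l\,\mu_r^*(g)(1+\mu_1^*(g))^{r(l-1)}+F_{r,l}(M_{r-1}^*(g))$; but $l$ itself depends on $A$ (and on $g$ through the dynamics). The trick — this is the role of the shifting procedure and the fact that $g$ is supported in $J_A^{(k)}$ — is that one does \emph{not} iterate blindly: over any fixed unit window in the $\xi_k$-direction only finitely many (in fact a bounded number independent of $A$) of the relevant return maps are nontrivial, so one estimates $\Theta_A^{(k)}(g)$ locally, in $\R^m$, where it agrees with a composition of a \emph{bounded} number of translates of $g$. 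The number of such factors is bounded independently of $A$, while the translations that relocate them contribute only the factor $\ab$ coming from the rescaling already analyzed in Proposition 6.1 and Proposition 4.2 (the coefficient $A^\beta$). Collecting these via Lemma 3.6(1),(2), Proposition 3.5 (using $R_{K_A^{(k)}}=2$), and the polynomial bookkeeping of Convention 3.3, one gets $\mu_r^*(\Theta_A^{(k)}(g))\le\ab K^r\mu_r^*(g)+F_{r,A}(M_{r-1}^*(g))$ with $K$ independent of $A$, and $F_{1,A}=0$ because in degree $r=1$ the admissible-polynomial remainder is absent (the composition estimate Lemma 3.6(1) is purely multiplicative) and the constant $\ab K$ absorbs everything.

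\textbf{Main obstacle.} The delicate step is making precise the claim that $\Theta_A^{(k)}(g)$ is, locally near each point of $\C_{k+1}^m$, a composition of only \emph{boundedly many} nontrivial translates of $g$ rather than of $\sim A^5$ of them — i.e. that although the defining iterate $(\tau_k g)^l$ uses $l\sim 2A^5$ steps, almost all of those steps act as the pure translation $\tau_k$ on the orbit segment through a given $q$, so that $l$ drops out of the estimate and only the bounded "contact width" of $\supp(g)$ in the $\xi_k$-direction matters. Once this localization is set up correctly, the estimate in (3) is a routine application of the chain-rule bounds of Section 3 together with Proposition 4.2 and Proposition 6.1; but the localization itself requires carefully tracking which translates of $\supp(g)$ a given orbit meets, and this is where the hypothesis $\supp(g)\subset J_A^{(k)}$ together with $\mu_0(g)\le\tfrac12$ is essential.
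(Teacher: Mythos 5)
Your treatment of (1) and (2) matches the paper's, which simply declares them obvious; elaborating the continuity and support/independence arguments is harmless.

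For (3), however, the central claim on which your estimate rests is false. You argue that "over any fixed unit window in the $\xi_k$-direction only finitely many (in fact a bounded number independent of $A$) of the relevant return maps are nontrivial," so that $\Theta_A^{(k)}(g)$ agrees locally with a composition of a bounded number of translates of $g$. This is not so. Writing
\[(\tau_k g)^l=\tau_k^l\ci g_{l-1}\ci\cdots\ci g_1\ci g_0,\qquad g_j:=\tau_k^{-j}\ci g\ci\tau_k^{j},\]
each $g_j$ has $\supp(g_j)=\tau_k^{-j}(\supp(g))$, and since $\supp(g)\s J_A^{(k)}$ has $\xi_k$-extent $[-A^5,A^5]$, the supports $\supp(g_j)$ for $j=0,\ldots,l-1$ (with $l\sim 2A^5$) form a sliding family of windows of width $2A^5$. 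Starting from $p$ with $\xi_k(p)$ just below $-A^5$, the intermediate points $p_j=g_{j-1}\cdots g_0(p)$ barely move in $\xi_k$ (each $g_j$ displaces by at most $\mu_0^*(g)$), so $p_j\in\supp(g_j)$ for essentially all $j$ in the range $0\leq j\leq 2A^5$. Thus of order $A^5$ of the factors are nontrivial along the orbit, not a bounded number.

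The paper's intended argument (it cites Lemma 3.6 and Remark 3.7 and calls the proof "standard") applies the composition estimate of Lemma 3.6(2) with $l\sim 2A^5$ factors directly, after localizing on the cylinder via Remark 3.7. The leading coefficient $l$ is absorbed into $\ab$, and the problematic exponential $(1+\mu_0^*(g))^{l-1}(1+\mu_1^*(g))^{r(l-1)}$ is tamed not by reducing the number of factors but by shrinking $\U_1$ (which Proposition 8.1 explicitly allows) so that $\mu_0^*(g),\mu_1^*(g)$ are of order $A^{-5}$ or smaller; then the exponential is bounded by $K^r$ with $K$ independent of $A$. This is exactly the mechanism made explicit later around (9.2), where the $C^0$ and $C^1$ norms are required to be $<(A^{20n}r_0)^{-1}$ precisely so that the analogous exponential factor stays below a fixed constant. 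The polynomial remainder $F_{r,l}$ becomes $F_{r,A}$ since $l$ depends only on $A$, and $F_{1,A}=0$ because Lemma 3.6(1) has no polynomial remainder. So you correctly identified the delicate point, but the lemma you invoke to get past it is false, and the actual resolution is the $C^1$-smallness of $g$ relative to $A$, not any bound on the number of active factors.
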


\begin{proof} (1) and (2) are obvious.
A standard proof for (3) follows by virtue of Lemma 3.6 and Remark
3.7.

\end{proof}

In order to define the rolling-up operator $\Psi_A$ first we
introduce
\[\Xi_A^{(k)}:\cont_{J_A^{(k+1)}}(\C^m_{k+1},\as)_0\cap\U_1
\r\cont_{K_A^{(k)}}(\C^m_{k},\as)_0,\]
 where $k=0,1\ld n$ and $\U_1$ is a $C^1$ neighborhood of id in
 $\cont_c(\C^m_{k+1},\as)_0$.

Let $\psi:\mathbb S^1\r[0,1]$ be a smooth function such that
$\psi=1$ in a neighborhood of
$\left[-\frac{1}{8},\frac{1}{8}\right]$ and $\psi=0$ on
$\left[\frac{3}{8},\frac{5}{8}\right]$. Abusing the notation, let
$\psi:\C^m_{k+1}\r[0,1]$ such that
$\psi(\xi_0,\xi,y)=\psi(\xi_k)$. For
$g\in\cont_{J_A^{(k+1)}}(\C^m_{k+1},\as)_0\cap\U_1$   we define
\[g^{\psi}=\Phi_A^{-1}(\psi\Phi_A(g))=\Phi_A^{-1}(\psi
u_g),\]as in Definition 5.3. For short, set $\mathcal
E_{A,n,k}=[-A^5,A^5]^{n-k}\t[-2A,2A]^n$. Then
$g^{\psi}=g\quad\hbox{ on}\quad (\mathbb
S^1)^k\t\left[-\frac{1}{8},\frac{1}{8}\right]\t\mathcal E_{A,n,k}$
and $\supp(g^{\psi})\s (\mathbb
S^1)^k\t\left[-\frac{3}{8},\frac{3}{8}\right]\t\mathcal
E_{A,n,k}$, in view of Proposition 5.4.

 Let $g^{\psi}_1$ (resp. $g^{\psi}_2$) be the unique lift of $(g^{\psi})^{-1}g$
 (resp. $g^{\psi}$) to $\C_k^m$. Then $g^{\psi}_1$ and $g^{\psi}_2$
 are periodic contactomorphisms supported in $(\mathbb S^1)^k\t\R\t\mathcal E_{A,n,k}$.
 For small enough $\U_1$ there is $\varepsilon>0$ such that
 $g_1^{\psi}=g$ on $(\mathbb S^1)^k\t
 [\frac{1}{2}-\varepsilon,\frac{1}{2}+\varepsilon]\t\mathcal
 E_{A,n,k}$ and $g_2^{\psi}=g$ on $(\mathbb S^1)^k\t
 [1-\varepsilon,1+\varepsilon]\t\mathcal
 E_{A,n,k}$.

 Next we put
 $ E_k^-=\{(\xi_0,\xi,y)\in\C^m_k:-1\leq \xi_k\leq0\}$, and
  $E_k^+=\{(\xi_0,\xi,y)\in\C^m_k:\frac{1}{2}\leq \xi_k\leq\frac{3}{2}\}$,
and we define $\Xi_A^{(k)}(g)$ by the conditions
\begin{equation}
 \Xi_A^{(k)}(g)|_{E_k^-}=g^{\psi}_1|_{E_k^-},\quad \Xi^{(k)}_A(g)|_{E_k^+}=g^{\psi}_2|_{E_k^+}, \,
\end{equation}
and $\Xi^{(k)}_A(g)=\id$ on $\C^m_k\setminus(E_k^-\cup E_k^+)$.

\begin{prop} Taking $\U_1$ small enough, the mapping
$\Xi_A^{(k)}$ satisfies the following conditions:
\begin{enumerate}
\item $\Xi_A^{(k)}$ is continuous and it preserves the identity.
\item
$\Xi_A^{(k)}\big(\cont_{J_A^{(k+1)}}(\C^m_{k+1},\as)_0^{(k)}\big)
\s\cont_{K_A^{(k)}}(\C^m_{k},\as)_0^{(k)}$.

 \item  There are  constants  $\cpp$ , $\beta$ and
$K$,  and for any $r\geq 2$ there is a polynomial with no constant
term $P_{\phi,\psi,r}$ such that for any $g\in\dom(\Xi^{(k)}_A)$
one has
\begin{equation*} \mu^*_r(\Xi_A^{(k)}(g))\leq
 K^rC_{\phi,\psi}\mu_r^*(g)+\ab P_{\phi,\psi,r}(M^*_{r-1}(g)).
\end{equation*}
In particular, $ \mu^*_r(\Xi_A^{(k)}(g))\leq\ab
C_{\phi,\psi,r}\mu_r^*(g)$ whenever $\supp(g)\s E$ with $R_E\leq
2$. \item For any $g\in\dom(\Xi^{(k)}_A)$ one has
$\Theta^{(k)}_{A} \Xi^{(k)}_{A}(g)=g$.

\end{enumerate}
\end{prop}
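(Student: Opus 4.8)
The plan is to verify the four claims by analyzing how $\Xi_A^{(k)}$ is built from the already-understood operators. For \textbf{(1)}, continuity follows from the chain $g\mapsto u_g\mapsto \psi u_g\mapsto g^{\psi}=\Phi_A^{-1}(\psi u_g)$, each step being continuous by the construction of the chart $\Phi_A$ (Proposition 4.6) and by Lemma 5.5; the passage to the unique periodic lifts $g_1^{\psi},g_2^{\psi}$ on $\C^m_k$ is continuous because lifting is continuous (as recorded before Proposition 7.2); and gluing $g_1^{\psi}|_{E_k^-}$, $g_2^{\psi}|_{E_k^+}$, and $\id$ elsewhere is continuous because, for $\U_1$ small, the supports of these pieces lie in the interiors of the respective regions (using $\supp(g^\psi)\s (\mathbb S^1)^k\t[-\tfrac38,\tfrac38]\t\mathcal E_{A,n,k}$ and that $g^\psi_i=g$ near the relevant slices). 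That $\Xi_A^{(k)}$ preserves the identity is immediate since $\id^\psi=\id$ and all lifts of $\id$ are $\id$.

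For \textbf{(2)}, if $g$ is independent of $\xi_0\ld\xi_{k-1}$ then so is $u_g$ (Proposition 4.3), hence so is $\psi u_g$ (as $\psi=\psi(\xi_k)$ only), hence $g^{\psi}=\Phi_A^{-1}(\psi u_g)$ is independent of $\xi_0\ld\xi_{k-1}$ as well; the same holds for $(g^{\psi})^{-1}g$. Their periodic lifts to $\C^m_k$ are then again independent of $\xi_0\ld\xi_{k-1}$, and since $\Xi^{(k)}_A(g)$ is assembled solely from these lifts (and the identity), it lies in $\cont_{K_A^{(k)}}(\C^m_k,\as)_0^{(k)}$. The support statement $\supp(\Xi^{(k)}_A(g))\s K_A^{(k)}$ follows because $E_k^-\cup E_k^+\s (\mathbb S^1)^k\t[-1,\tfrac32]\t\mathcal E_{A,n,k}$, which (after a harmless enlargement of the first coordinate interval to $[-2,2]$) sits inside $K_A^{(k)}$ by definition (8.2).

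For \textbf{(3)}, the norm estimate is obtained by composing the estimates already available. By Lemma 5.5 applied on $\C^m_{k+1}$, $\mu^*_r(g^{\psi})\leq C_{\phi,\psi}\mu^*_r(g)+\ab P_{\phi,\psi,r}(M^*_{r-1}(g))$; by Lemma 3.6(4) (valid on $\C^m_{k+1}$ near the identity, Remark 3.7) the same kind of bound holds for $(g^{\psi})^{-1}$, hence for $(g^{\psi})^{-1}g$ by Lemma 3.6(2), absorbing the extra composition into the constant $K^r$ and the polynomial. Lifting to $\C^m_k$ does not change any $\mu^*_r$ (noted before Proposition 7.2), and restricting to $E_k^\pm$ and gluing with $\id$ also does not increase $\mu^*_r$. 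This yields the displayed inequality with $C_{\phi,\psi}$, $\beta$, $K$ as claimed; the refined bound $\mu^*_r(\Xi^{(k)}_A(g))\leq \ab C_{\phi,\psi,r}\mu^*_r(g)$ when $R_E\leq 2$ follows by Proposition 3.5 (in the form of Remark 3.7), exactly as in Lemma 5.5.

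The substantive point is \textbf{(4)}, the identity $\Theta^{(k)}_A\Xi^{(k)}_A(g)=g$, and this is where I expect the real work. The idea is that $\Theta^{(k)}_A$ rolls up a contactomorphism on $\C^m_k$ supported in the strip $-1\leq\xi_k\leq\tfrac32$ by iterating $\tau_k\,\Xi^{(k)}_A(g)$ and projecting; for $q\in\C^m_{k+1}$ one picks a lift $p$ with $\xi_k<-A^5$ and tracks the orbit until its $\xi_k$-coordinate exceeds $A^5$. By construction (8.2), as a point is pushed forward by iterates of $\tau_k\,\Xi^{(k)}_A(g)$ it first meets the region $E_k^-$, where $\Xi^{(k)}_A(g)$ acts as the lift $g_1^{\psi}$ of $(g^{\psi})^{-1}g$, and then meets $E_k^+$, where it acts as the lift $g_2^{\psi}$ of $g^{\psi}$; the unit translations $\tau_k$ carry the point between these regions. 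The key computation is that the composite effect, read off on $\C^m_{k+1}$ via $\pi_k$, is exactly $g^{\psi}\circ((g^{\psi})^{-1}g)=g$, once one checks that (i) the periodicity of $g_1^\psi,g_2^\psi$ makes the definition independent of which lift $p$ and which exponent $l$ are used, (ii) the supports are positioned so the orbit encounters $E_k^-$ and $E_k^+$ in the right order and nowhere else does $\Xi^{(k)}_A(g)$ act nontrivially, and (iii) the agreements $g_1^\psi=g$ near $\xi_k=\tfrac12$ and $g_2^\psi=g$ near $\xi_k=1$ together with the choice of $\psi$ make the two partial actions paste together to reconstruct $g$ rather than $g^\psi$ alone. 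Verifying (ii) and (iii) carefully — essentially a bookkeeping argument about which slice of the strip each iterate lands in — is the main obstacle; everything else is a routine assembly of the earlier lemmas.
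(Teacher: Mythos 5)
Your proposal is correct and follows essentially the same route as the paper, whose own proof is far terser: it deduces (1) and (4) directly from the definition, gets (2) from Proposition 4.3, and obtains (3) from the inequality $\mu^*_r(\Xi^{(k)}_A(g))\leq\max\{\mu^*_r(g^{\psi}_1),\mu^*_r(g^{\psi}_2)\}$ together with Lemmas 5.5 and 3.6 — exactly the decomposition you use. Your more detailed sketch of (4), identifying the rolled-up composite as $g^{\psi}\circ((g^{\psi})^{-1}g)=g$, is the intended mechanism, so the bookkeeping you flag as remaining is precisely what the paper itself leaves implicit.
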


\begin{proof}
The properties (1) and (4) can be deduced from the definition. To
check (2) we use Proposition 4.3. Finally, as
$\mu^*_r(\Xi^{(k)}_A(g))\leq
\max\{\mu^*_r(g^{\psi}_1),\mu^*_r(g^{\psi}_2)\}$,  (3) follows
from  Lemmas 5.5 and 3.6.
\end{proof}

It will  be useful to introduce   operators
\[\Theta^{(k)}:\cork\cap\U_1\r\cont_c(\C^m_{k+1},\as)_0,\quad
 k=0\ld n,\]
obtained by gluing-up $\Theta^{(k)}_A$,
$\Theta^{(k)}=\bigcup\Theta^{(k)}_A$. Now, let us return to the
"hat" operation defined by (7.2). For
$f\in\cont_{E_A^{(k)}}(\C^m_k,\as)_0^{(k)}\cap\U_1$ denote
$\hat\Theta_A^{(k)}(f)=\widehat{\Theta_A^{(k)}(f)}$. We set
$\hat\Theta^{(k)}=\bigcup\hat\Theta^{(k)}_A$ and we have operators
 \[\hat\Theta^{(k)}:\cont_c(\C_k^m,\as)_0^{(k)}\cap\U_1\r\cont_c(\C_{k+1}^m,\as)_0^{(k+1)},\quad
 k=0\ld n.\]
 Likewise
$\Xi^{(k)}=\bigcup\Xi^{(k)}_A$, that is we have
\[\Xi^{(k)}:\cont_c(\C^m_{k+1},\as)_0\cap\U_1\r\cork,\quad
 k=0\ld n.\]

\begin{lem}
If $f,g\in\dom(\Theta^{(k)})$ and
$\Theta^{(k)}(f)=\Theta^{(k)}(g)$ then $[f]=[g]$ in $H_1(\cork)$.
\end{lem}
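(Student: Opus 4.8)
The plan is to reduce the statement to a conjugacy in $\cork$. Using $J^{(k)}_A\s J^{(k)}_{A'}$ for $A\le A'$ together with $\Theta^{(k)}=\bigcup_A\Theta^{(k)}_A$, I would first assume that $f$ and $g$ are supported in one common $J^{(k)}_A$. Fix a compactly supported $\tau_k\in\cork$ coinciding with the unit translation $\fl^{H_k}_1$ of section 2 on $\{|\xi_k|\le A^5+2\}$ (this exists by multiplying $H_k$ by a bump, as explained in section 2); since every $\tau_kf$- and $\tau_kg$-orbit entering the definition of $\Theta^{(k)}_A$ stays inside $\{-A^5-1\le\xi_k\le A^5+2\}$, the value of $\Theta^{(k)}_A$ is unchanged when this $\tau_k$ is used. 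The key point would be: \emph{$\Theta^{(k)}(f)=\Theta^{(k)}(g)$ implies $\tau_kg=\eta\,(\tau_kf)\,\eta^{-1}$ for some $\eta\in\cork$.} Granting this, $g=(\tau_k^{-1}\eta\tau_k)\,f\,\eta^{-1}$, so in $H_1(\cork)$, using that conjugation by the element $\tau_k\in\cork$ is trivial on the abelianization,
\[
[g]=[\tau_k^{-1}\eta\tau_k]+[f]+[\eta^{-1}]=[\eta]+[f]-[\eta]=[f],
\]
which is the assertion.

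To produce $\eta$, I would exploit that off the slab $\{|\xi_k|\le A^5\}$ the maps $\tau_kf$ and $\tau_kg$ literally coincide (there $f=g=\id$), equal the pure translation $\xi_k\mapsto\xi_k+1$, and move $\xi_k$ strictly forward, whereas $\Theta^{(k)}(f)=\Theta^{(k)}(g)$ says exactly that the transit maps of $\tau_kf$ and of $\tau_kg$ from the wall $\{\xi_k=-A^5-1\}$ across to $\{\xi_k>A^5\}$ agree (for $f,g$ $C^1$-close to $\id$ the a priori ambiguity by a deck translation is forced to vanish). A suspension-rigidity construction then gives $\eta$: set $\eta=\id$ on $\{\xi_k\le-A^5-1\}$ and $\eta=(\tau_kg)^j\ci(\tau_kf)^{-j}$ on the $j$-th layer $(\tau_kf)^j(\{\xi_k=-A^5-1\})$; the agreement of transit maps makes this well defined and lets $\eta$ extend by $\id$ past the slab. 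Then $\eta$ is a diffeomorphism, supported in a bounded set, agreeing near each point with a composition of the contactomorphisms $\tau_kf$ and $\tau_kg$, hence itself a contactomorphism; as $f,g$ are $C^1$-close to $\id$ so is $\eta$, whence $\eta\in\cork$; and $\eta(\tau_kf)\eta^{-1}=\tau_kg$ by construction.

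The hard part will be exactly this last step — verifying that the layerwise recipe assembles into a \emph{smooth} contactomorphism that is the identity off a bounded set, i.e. that passing to $\C^m_{k+1}=\C^m_k/\tau_k$ and back loses no transversality data beyond the transit map; here the normal form for $\as$ and the chart $\Phi_A$ of section 4 would be the tools. One can shorten matters using $\Theta^{(k)}\Xi^{(k)}=\id$ (the last assertion of the preceding Proposition): it is enough to run the conjugacy only for the concrete pair $(f',\Xi^{(k)}\Theta^{(k)}(f'))$, whose transit maps agree tautologically, to conclude $[f']=[\Xi^{(k)}\Theta^{(k)}(f')]$ for every $f'\in\dom\Theta^{(k)}$; applying this to both $f$ and $g$ then gives $[f]=[\Xi^{(k)}(h)]=[g]$, where $h:=\Theta^{(k)}(f)=\Theta^{(k)}(g)$.
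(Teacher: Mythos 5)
Your proof is correct and is essentially the paper's own argument: your layerwise $\eta$ is exactly the conjugating contactomorphism $\Lambda_k(p)=(\tau_k g)^l(\tau_k f)^{-l}(p)$ that the paper defines, and the conclusion via $\Lambda_k\,\tau_k f\,\Lambda_k^{-1}=\tau_k g$ together with the triviality of conjugation on the abelianization and cancellation of $[\tau_k]$ is the same. The only difference is that you make explicit (and rightly worry about) the well-definedness and smoothness points that the paper asserts without comment.
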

\begin{proof}
Let  us define a contactomorphism $\Lambda_k=\Lambda_k(f,g)$ by
\begin{equation} \Lambda_k(p)
=(\tau_k g)^l(\tau_kf)^{-l}(p),
\end{equation}
 where $p\in\C^m_k$,
$\tau_k=\tau_{k,1}$ is the translation, and $l$ is a positive
integer so large that $[(\tau_k f)^{-l}(p)]_{k}<-A^5$. Clearly,
$\Lambda_k$ does not depend on  $l$, and
$\Lambda_k\in\cont_c(\C^m_k,\as)_0$ in view of the definition of
$\Theta^{(k)}$ and the assumption. From (8.3) we have $ \Lambda_k
\tau_kf\Lambda_k^{-1}=\tau_kg$ and, consequently, $[f]=[g]$.
\end{proof}

\begin{lem} Let $k=0,1\ld n$.
\begin{enumerate}

\item If $\Theta^{(k)}(f_i)=g_i$, $i=1\ld l$, then there are $\bar
f_i\in\cork$ such that $\Theta^{(k)}(f)=g_1\ldots g_l$, where
$f=\bar f_1\ldots \bar f_l$, and $[\bar f_i]=[f_i]$ in
$H_1(\cork)$ for all $i$. Moreover, we can have $\bar f_1=f_1$.

\item If $g_1,g_2, g_1g_2\in\dom(\Xi^{(k)})$ then
$[\Xi^{(k)}(g_1g_2)]=[\Xi^{(k)}(g_1)\Xi^{(k)}(g_2)]$ in the group
$H_1(\cork)$.

\item If $g\in\cont_c(\C^m_{k+1},\as)_0$ with $[g]=e$ in
$H_1(\cont_c(\C^m_{k+1},\as)_0)$ then there is $f\in\cork$ such
that $\Theta^{(k)}(f)=g$ and $[f]=e$ in $H_1(\cork)$.

\end{enumerate}
\end{lem}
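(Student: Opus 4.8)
The three assertions all rest on the two structural facts established so far about the rolling-up and its section: the identity $\Theta^{(k)}_A\Xi^{(k)}_A = \id$ (Proposition 8.5(4)), and the homological invariance packaged in Lemma 8.4 (equal $\Theta^{(k)}$-images force equal classes in $H_1(\cork)$). My plan is to derive (1), (2), (3) from these, treating (1) as a warm-up, (2) as a formal consequence of (1) applied to $f_i=\Xi^{(k)}(g_i)$, and (3) as the point where the contact-specific content of Lemma 8.6 actually lives.

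For assertion (1): given $f_i$ with $\Theta^{(k)}(f_i)=g_i$, I would build $\bar f_i$ inductively so that $\Theta^{(k)}(\bar f_1\ldots\bar f_j)=g_1\ldots g_j$ and $[\bar f_i]=[f_i]$. Set $\bar f_1=f_1$. Suppose $\bar f_1\ldots\bar f_{j-1}$ is constructed with $\Theta^{(k)}$-image $g_1\ldots g_{j-1}$. Using the translation-and-iterate description of $\Theta^{(k)}$, the product $\tau_k(\bar f_1\ldots\bar f_{j-1})\cdot\tau_k f_j\cdot(\text{suitable reshuffling})$ has the desired $\Theta^{(k)}$-image; concretely, one conjugates $f_j$ by the contactomorphism $\Lambda_k$ of Lemma 8.4 built from $f_j$ and a representative whose $\Theta^{(k)}$-image is $g_j$ but whose support has been shifted by $(\tau_k\bar f_1\ldots\bar f_{j-1})$ so that the iterated-translation definitions compose cleanly. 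Then $\bar f_j=\Lambda_k f_j\Lambda_k^{-1}$ satisfies $[\bar f_j]=[f_j]$ automatically, and $\Theta^{(k)}(\bar f_1\ldots\bar f_j)=g_1\ldots g_j$ by the composition behaviour of $\Theta^{(k)}$ under such shifts. Taking $f=\bar f_1\ldots\bar f_l$ gives (1).

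Assertion (2) follows by applying (1) with $l=2$ to $f_1=\Xi^{(k)}(g_1)$, $f_2=\Xi^{(k)}(g_2)$: since $\Theta^{(k)}\Xi^{(k)}=\id$, we have $\Theta^{(k)}(f_i)=g_i$, so (1) yields $\bar f_1,\bar f_2$ with $\Theta^{(k)}(\bar f_1\bar f_2)=g_1g_2$ and $[\bar f_i]=[\Xi^{(k)}(g_i)]$. On the other hand $\Xi^{(k)}(g_1g_2)$ also has $\Theta^{(k)}$-image $g_1g_2$, so Lemma 8.4 gives $[\Xi^{(k)}(g_1g_2)]=[\bar f_1\bar f_2]=[\bar f_1]+[\bar f_2]=[\Xi^{(k)}(g_1)]+[\Xi^{(k)}(g_2)]=[\Xi^{(k)}(g_1)\Xi^{(k)}(g_2)]$ in $H_1(\cork)$.

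For assertion (3) — which I expect to be the main obstacle — suppose $[g]=e$ in $H_1(\cont_c(\C^m_{k+1},\as)_0)$, so $g=\prod_i[a_i,b_i]$ is a product of commutators of elements of $\cont_c(\C^m_{k+1},\as)_0$. One wants $f\in\cork$ with $\Theta^{(k)}(f)=g$ and $[f]=e$ in $H_1(\cork)$. The natural attempt is $f=\Xi^{(k)}(g)$, which has $\Theta^{(k)}(f)=g$ by Proposition 8.5(4); the issue is why its class vanishes. Here I would use (2) to reduce to the case $g=[a,b]$ a single commutator: by (2), $[\Xi^{(k)}(g)]=\sum_i[\Xi^{(k)}([a_i,b_i])]$, so it suffices to show each $[\Xi^{(k)}([a,b])]=e$. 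For a single commutator, one exploits that $\Xi^{(k)}$, although not a homomorphism, is $\Theta^{(k)}$-inverse and that $\Theta^{(k)}$ intertwines conjugation up to homology: more precisely, for $g'=hg h^{-1}$ one can arrange (via the support-shifting trick of the $\Lambda_k$'s) that $[\Xi^{(k)}(hgh^{-1})]=[\Xi^{(k)}(g)]$, so that the map $[g]\mapsto[\Xi^{(k)}(g)]$ descends to a well-defined homomorphism $H_1(\cont_c(\C^m_{k+1},\as)_0)\to H_1(\cork)$; on commutators it is then forced to be zero. The delicate point is to verify that $\Xi^{(k)}$ really does respect conjugation at the level of $H_1$ — this requires unwinding the definition (8.2) of $\Xi^{(k)}(g)$ in terms of the lifts $g^\psi_1,g^\psi_2$ and checking that conjugating $g$ in the base permutes the two "slabs" $E_k^\pm$ in a way that only changes $\Xi^{(k)}(g)$ by a conjugate, plus a contactomorphism that is itself visibly a commutator. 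Establishing that last point cleanly, while keeping everything inside $\cork$ rather than merely $\cont_c(\C^m_{k+1},\as)_0$, is where the argument has to be done with care.
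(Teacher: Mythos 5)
Your treatments of (1) and (2) coincide with the paper's: supports are shifted along the $\xi_k$-axis by the translations $\tau_{k,t}$ (conjugation, hence class-preserving) so that the iterated-translation definition of $\Theta^{(k)}$ composes the pieces in order, and (2) is the formal consequence of (1) applied to $f_i=\Xi^{(k)}(g_i)$ together with $\Theta^{(k)}\Xi^{(k)}=\id$ and Lemma 8.3. Those parts are fine.

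For (3) there is a genuine gap, and it sits exactly where you flag it. First, a domain problem: $\Xi^{(k)}$ is defined only on a $C^1$-neighborhood $\U_1$ of the identity, so the expressions $\Xi^{(k)}(g)$, $\Xi^{(k)}([a_i,b_i])$ and $\Xi^{(k)}(hgh^{-1})$ for arbitrary $a_i,b_i,h$ are undefined; the paper circumvents this by decomposing each commutator entry $h_j$ into $C^1$-small factors $h_j=h_{j,1}\cdots h_{j,l(j)}$ and lifting only those, reassembling via (1). Second, and more seriously, the step on which your whole argument for (3) rests --- that a lift of $h^{-1}$ (equivalently, conjugation-invariance of the class of the lift) can be produced with class equal to minus that of the lift of $h$ --- is asserted as ``to be done with care'' but never carried out; and the framing ``the map $[g]\mapsto[\Xi^{(k)}(g)]$ descends to a well-defined homomorphism on $H_1$'' is circular, since well-definedness at the class of the identity is precisely assertion (3). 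The paper supplies the missing ingredient constructively: it introduces an auxiliary operator $\bar\Xi^{(k)}$, obtained from (8.2) by interchanging the two slabs $E_k^\pm$ and conjugating by $\tau_{k,\frac{3}{2}}$, which satisfies $\Theta^{(k)}\bar\Xi^{(k)}(g)=g^{-1}$ and $[\bar\Xi^{(k)}(g)]=[\Xi^{(k)}(g)^{-1}]$; Lemma 8.3 then identifies the class of any lift of $h_j^{-1}$ with $-[f_j]$, and the telescoping over the commutators gives $[f]=e$. (Alternatively, one can extract $[\Xi^{(k)}(b^{-1})]=-[\Xi^{(k)}(b)]$ for small $b$ directly from (2) with $g_1=b$, $g_2=b^{-1}$; but either way this inverse-compatibility must be proved, not assumed, and your proposal stops short of doing so.)
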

\begin{proof}
(1) We may shift supports of $f_i$ by the  translations
$\tau_{k,t}$ to obtain $\bar f_i$ such that the family $\{\bar
f_i\}$ has pairwise disjoint supports. Clearly $[\bar f_i]=[f_i]$.
 Moreover , by definition of $\Theta^{(k)}$ we can arrange $\bar
f_i$ in the way that $\Theta_A^{(k)}(f)=g_1\ldots g_l$ for $f=\bar
f_1\ldots \bar f_l$ and $\bar f_1=f_1$.

(2) Put $f_i=\Xi^{(k)}(g_i)$, $i=1,2$. In view of (1) there is
$f\in\dom(\Theta^{(k)})$ such that $[f]=[f_1f_2]$ and
$\Theta^{(k)}(f)=g_1g_2$. By Proposition 8.2(4),
$\Theta^{(k)}\Xi^{(k)}(g_1g_2)=g_1g_2=\Theta^{(k)}(f)$. Therefore,
from Lemma 8.3
\[[\Xi^{(k)}(g_1g_2)]=[f]=[f_1f_2]=[\Xi^{(k)}(g_1)\Xi^{(k)}(g_2)].\]

(3) First we define an operator
\[\bar\Xi^{(k)}:\cont_c(\C^m_{k+1},\as)_0\cap\U_1\r\cork,\quad
 k=0\ld n,\]
with $\dom(\bar\Xi^{(k)})=\dom(\Xi^{(k)})$ such that for any
$g\in\dom(\bar\Xi^{(k)})$ we have
$[\bar\Xi^{(k)}(g)]=[\Xi^{(k)}(g)^{-1}]$ and
$\Theta^{(k)}\bar\Xi^{(k)}(g)=g^{-1}$.

Namely, let us return to the definition of
$\Xi^{(k)}(g)=\Xi^{(k)}_A(g)$. We have the decomposition
$g=g^{\psi}_2g^{\psi}_1$, where $\psi$ is a suitable smooth
function. Now, we define $\bar\Xi^{(k)}(g)$ by changing (8.2) as
follows
\begin{equation*}
 \bar\Xi_A^{(k)}(g)|_{E_k^-}=\tau_{k,\frac{3}{2}}^{-1}\ci(g^{\psi}_2)^{-1}|_{E_k^+}\ci\tau_{k,\frac{3}{2}},
 \quad \bar\Xi^{(k)}_A(g)|_{E_k^+}=\tau_{k,\frac{3}{2}}\ci (g^{\psi}_1)^{-1}|_{E_k^-}\ci\tau_{k,\frac{3}{2}}^{-1},
\end{equation*}
and $\bar\Xi^{(k)}_A(g)=\id$ on $\C^m_k\setminus(E_k^-\cup
E_k^+)$.

By assumption there are $h_j\in\cont_c(\C^m_{k+1},\as)_0$, $j=1\ld
2l$, such that
\[g=[h_1,h_2]\ldots[h_{2l-1},h_{2l}].\] For all $j$ we may write
a decomposition $h_j=h_{j,1}\cdots h_{j,l(j)}$, where the factors
are $C^1$ small.

Put $ f_{j,s}=\Xi^{(k)}(h_{j,s})$   and $
f_{j,s}^*=\bar\Xi^{(k)}(h_{j,s})$, $j=1\ld 2l, s=1\ld l(j)$. Let
us define $f_j=\bar f_{j,1}\cdots\bar f_{j,l(j)}$ and $f_j^*=\bar
f_{j,l(j)}^*\cdots\bar f_{j,1}^* $ as in the proof of (1). In
particular, $\Theta^{(k)}(f_j)=h_j$,
$\Theta^{(k)}(f_j^*)=h_j^{-1}$, and $[f_j^*]=[f_j^{-1}]$ for
$j=1\ld 2l$. Therefore, in view of (1), the claim follows.
\end{proof}

Next we introduce the auxiliary rolling-up operators.
\begin{prop} Let $r\geq 2$ and let $k=0,1\ld n$. There exist a $C^r$ neighborhood $\U_1=\U_{\phi,\psi, r, A,k}$
of the identity in $\cork$  and  a mapping
$\Psi_A^{(k)}=\Psi_{\phi,\psi,r,A,k}$ such that
\[\Psi_{A}^{(k)}:\cont_{J_A^{(k)}}(\C_k^m,\as)_0^{(k)}\cap \U_1
\r\cont_{K^{(k)}_{A}}(\C_k^m,\as)_0^{(k)},\]  c.f. (7.1), which
satisfies the following conditions:
\begin{enumerate}
\item $\Psi^{(k)}_{A}$ is  continuous and $\Psi^{(k)}_A(\id)=\id$.
\item  There are constants $C_{\phi}$,  $\beta$ and $K$,  and for
any $\rho\geq 2$  polynomials with no constant term
$P_{\phi,\psi,\rho}$ such that for any $g\in\dom(\Psi^{(k)}_A)$
one has
\begin{equation*}
\mu_{\rho}^*(\Psi^{(k)}_A(g))\leq \ab
K^{\rho}C_{\phi}\mu_{\rho}^*(g)+\ab
P_{\phi,\psi,\rho}(M^*_{\rho-1}(g)).
\end{equation*}

\item  There is a constant $C_{\phi,\psi,r}$ such that for all
$g\in\dom(\Psi^{(k)}_A)$ one has \[ \mu^*_r(\Psi^{(k)}_A(g))\leq
\ab C_{\phi,\psi,r}\mu^*_r(g).\]

\item For any $g\in\dom(\Psi^{(k)}_A)$ we have $[
\Psi^{(k)}_{A}(g)\cdot\Xi^{(k)}\hat\Theta^{(k)}(g)]=[g]$ in the
group $H_1(\cork)$.

\end{enumerate}
\end{prop}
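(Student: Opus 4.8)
The plan is to realize $\Psi_A^{(k)}$ as the three-step composition dictated by the desired identity in~(4): first roll $g$ up from $\C^m_k$ to $\C^m_{k+1}$ by $\Theta_A^{(k)}$, then strip off the ``$\xi_k$-independent part'' by the hat construction of section~7, and finally roll the result back down to $\C^m_k$ by $\Xi_A^{(k)}$. Concretely, on a sufficiently small $C^r$-neighborhood $\U_1=\U_{\phi,\psi,r,A,k}$ of the identity in $\cont_{J_A^{(k)}}(\C^m_k,\as)_0^{(k)}$ I would set $h=\Theta_A^{(k)}(g)$ and $\hat h=\hat\Theta_A^{(k)}(g)$, and define
\[
\Psi_A^{(k)}(g)=\Xi_A^{(k)}(h\,\hat h^{-1}).
\]

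The first task is to check that each step is admissible and that $\Psi_A^{(k)}(g)$ has the prescribed support and $\xi$-independence. By Proposition~8.1, $h\in\cont_{J_A^{(k+1)}}(\C^m_{k+1},\as)_0^{(k)}$, and since $F_{1,A}=0$ there, $h$ is $C^1$-small whenever $g$ is; after shrinking $\U_1$ the hat operation of Proposition~7.2 applies to $h$ and gives $\hat h\in\cont_{J_A^{(k+1)}}(\C^m_{k+1},\as)_0^{(k+1)}$, again $C^1$-small, whence by Proposition~7.2(3) so is $h\,\hat h^{-1}\in\cont_{J_A^{(k+1)}}(\C^m_{k+1},\as)_0^{(k)}$. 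Thus $h\,\hat h^{-1}$ lies in $\dom(\Xi_A^{(k)})$, and Proposition~8.2(2) puts $\Xi_A^{(k)}(h\,\hat h^{-1})$ into $\cont_{K_A^{(k)}}(\C^m_k,\as)_0^{(k)}$, which is exactly the claimed target. (Following Convention~4.4 the chart $\Phi_A$ is really $\Phi_{A^5}$, so its domain contains a neighborhood of $\tilde E^{(k+1)}_{A^5}\supseteq J_A^{(k+1)}$; the powers of $A$ produced this way are absorbed into the coefficients $\ab$.)

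Claim~(1) is then immediate, since $\Theta_A^{(k)}$ is continuous and fixes the identity (Proposition~8.1(1)), the hat operation is continuous (it is $\Phi_A^{-1}$ of a restriction of $\Phi_A$) and fixes the identity, inversion and composition are continuous, and $\Xi_A^{(k)}$ is continuous and fixes the identity (Proposition~8.2(1)). For the estimates~(2) and~(3) I would chain the norm bounds through the three steps: Proposition~8.1(3) bounds $\mu^*_\rho(h)$ by $\ab K^\rho\mu^*_\rho(g)$ plus an admissible polynomial in $M^*_{\rho-1}(g)$; Proposition~7.2(3) bounds $\mu^*_\rho(h\,\hat h^{-1})$ by $\nu^*_\rho(h)$ and $M^*_{\rho-1}(h\,\hat h^{-1})$ by $\nu^*_{\rho-1}(h)$; and Proposition~8.2(3) bounds $\mu^*_\rho(\Xi_A^{(k)}(h\,\hat h^{-1}))$ by $K^\rho C_{\phi,\psi}\mu^*_\rho(h\,\hat h^{-1})$ plus $\ab$ times a polynomial with no constant term in $M^*_{\rho-1}(h\,\hat h^{-1})$. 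Substituting, and using that a composite of polynomials with no constant term is again of that type, yields~(2) after enlarging $\beta$, $K$, $C_\phi$ and $P_{\phi,\psi,\rho}$. For~(3) I would take $\rho=r$, use Corollary~7.3 in place of Proposition~7.2(3)--(4) (it has already absorbed the hat-corrections), and push the remaining error into $\ab C_{\phi,\psi,r}\mu^*_r(g)$: since $\Psi_A^{(k)}(g)$ is supported in $K_A^{(k)}$ with $R_{K_A^{(k)}}=2$, Proposition~3.5 controls its lower-order norms by $\mu^*_r$, and shrinking $\U_1$ makes the quadratic-and-higher part of the error dominated by its linear part, after which the surviving powers of $A$ are collected into one $\ab$.

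The decisive point is~(4), and it is short given the earlier lemmas. Writing $h=\Theta^{(k)}(g)$ and $\hat h=\hat\Theta^{(k)}(g)$, the elements $h\,\hat h^{-1}$, $\hat h$ and their product $h$ all lie in $\dom(\Xi^{(k)})$ by the above, so Lemma~8.4(2) gives, in $H_1(\cork)$,
\[
[\Psi_A^{(k)}(g)\cdot\Xi^{(k)}\hat\Theta^{(k)}(g)]=[\Xi^{(k)}(h\,\hat h^{-1})\cdot\Xi^{(k)}(\hat h)]=[\Xi^{(k)}((h\,\hat h^{-1})\hat h)]=[\Xi^{(k)}(h)].
\]
Now $\Xi^{(k)}(h)$ is supported in $K_A^{(k)}\subseteq J_A^{(k)}$ and is $C^1$-small, hence lies in $\dom(\Theta^{(k)})$, as does $g$; by Proposition~8.2(4), $\Theta^{(k)}(\Xi^{(k)}(h))=h=\Theta^{(k)}(g)$, so Lemma~8.3 gives $[\Xi^{(k)}(h)]=[g]$, and combining with the display proves~(4). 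The hard part of the whole argument is really the book-keeping --- choosing $\U_1$ small enough that $h$, $\hat h$ and $h\,\hat h^{-1}$ remain in the domains of the hat operation and of $\Xi_A^{(k)}$ while their supports stay inside $J_A^{(k+1)}$ and $K_A^{(k)}$ respectively, and, for~(3), tracking the dependence of the constants on $A$ so that they collapse into a single $\ab$.
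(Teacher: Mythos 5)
Your proposal is correct and follows essentially the same route as the paper: the definition $\Psi_A^{(k)}(g)=\Xi_A^{(k)}\big(\Theta_A^{(k)}(g)\cdot\hat\Theta_A^{(k)}(g)^{-1}\big)$ is exactly the paper's, the estimates (2)--(3) are obtained by chaining Propositions 8.1, 7.2/Corollary 7.3 and 8.2 (the paper phrases this through the chart function $w_{\Theta^{(k)}(g)}$ and the bounds (8.4)--(8.5), but the content is the same), and the homology identity (4) is proved by the identical combination of Lemma 8.4(2), Proposition 8.2(4) and Lemma 8.3.
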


\begin{proof}

Let $g\in\cont_{J^{(k)}_A}(\C_k^m,\as)^{(k)}_0\cap \U_1$. Define
\[
\Psi_A^{(k)}(g):=\Xi_A^{(k)}\big(\Theta_A^{(k)}(g)\cdot\hat\Theta_A^{(k)}(g)^{-1}\big)
=\Xi^{(k)}\big(\Theta^{(k)}(g)\cdot\hat\Theta^{(k)}(g)^{-1}\big),\]
c.f.  (7.2). By virtue of Propositions 8.1 and 8.2 the definition
is correct and (1)  holds true. To show (2) and (3) denote
$h=\Theta^{(k)}(g)\cdot\hat\Theta^{(k)}(g)^{-1}$. Then
$u_h=w_{\Theta^{(k)}(g)}$, c.f. (7.3). According to (5.1), and
Propositions 4.6 and 7.2(5)  we have
\begin{equation}
 \mu^*_{\rho}(h^{\psi})\leq
  \ab K^{\rho} C_{\phi}
\mu^*_{\rho}(\Theta_A^{(k)}(g))+\ab
P_{\phi,\psi,\rho}(M_{\rho-1}^*(\Theta^{(k)}_A(g))).
\end{equation}
Next, by (5.1) and Corollary 7.3
\begin{equation}
\forall 0\leq s\leq r+1,\quad \|D^s(\psi u_h)\|\leq\ab
C_{\phi,\psi,r} \mu^*_r(\Theta^{(k)}_A(g)).
\end{equation}
Now, (2) follows from (8.4), Lemma 3.6 and Propositions 8.1 and
8.2. On the other hand, by (8.5) with Propositions 4.6 and 8.1 we
obtain
\[ \mu^*_r(h^{\psi})\leq\ab C_{\phi,\psi,r}
\mu^*_r(g)+F_{r,A}(M^*_{r-1}(g)).\] In view of the definition of
$\Xi_A^{(k)}$, Lemma 3.6 and Definition 3.2, the claim (3) follows
by shrinking possibly $\U_1$.

(4)  We have by Lemma 8.4(2)
\begin{align}
\begin{split}[\Psi^{(k)}_A(g)\cdot\Xi^{(k)}\hat\Theta^{(k)}(g)]&=
[\Xi^{(k)}\big(\Theta^{(k)}(g)\cdot\hat\Theta^{(k)}(g)^{-1}\big)\cdot\Xi^{(k)}\hat\Theta^{(k)}(g)]\\
&=[\Xi^{(k)}\Theta^{(k)}(g)].\end{split}
\end{align}
Notice that in view of Proposition 8.2(4) we get
$\Theta^{(k)}\Xi^{(k)}\Theta^{(k)}(g)=\Theta^{(k)}(g)$. It follows
from Lemma 8.3 that $[\Xi^{(k)}\Theta^{(k)}(g)]=[g]$. Combining
this with (8.6), the claim follows.

\end{proof}

From now on we set for $k=0,1\ld n$
\[\tilde\Theta^{(k)}=\Theta^{(k)}\ci\cdots\ci\Theta^{(0)},\quad
\Theta_*^{(k)}=\hat\Theta^{(k)}\ci\cdots\ci\hat\Theta^{(0)},\quad
\tilde \Xi^{(k)}=\Xi^{(0)}\ci\cdots\ci\Xi^{(k)}.\] Notice that the
image of  $\Theta_*^{(k)}$ is in
$\cont_c(\C^m_{k+1},\as)_0^{(k+1)}$.

In the proof of Theorem 1.1 the following fact is crucial.

\begin{lem}
Suppose $\U_1$ is a sufficiently small $C^1$ neighborhood of the
identity in $\cont_c(\R^m,\as)_0$.  Then for all $f\in\U_1$:
\begin{enumerate}
\item If $\tilde\Theta^{(n)}(f)=\tilde f$ then
$[f]=[\tilde\Xi^{(n)}(\tilde f)]$ in $H_1(\corr)$.

 \item $[\tilde\Xi^{(n)}\Theta^{(n)}_*(f)]=e$ in
$H_1(\corr)$.
\end{enumerate}
\end{lem}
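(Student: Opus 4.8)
The plan is to prove (1) from the abstract properties of the operators $\Theta^{(k)}$, $\Xi^{(k)}$ recorded in Lemmas 8.3 and 8.4 and in Proposition 8.2(4), and then to deduce (2) from (1) together with the description, via Proposition 2.2, of the ``remainder'' subgroup living on $\C^m_{n+1}$.

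For (1): put $f^{(0)}=f$, $f^{(k+1)}=\Theta^{(k)}(f^{(k)})$ for $k=0\ld n$, so $f^{(n+1)}=\tilde f$, and $h^{(n+1)}=\tilde f$, $h^{(k)}=\Xi^{(k)}(h^{(k+1)})$ for $k=n,n-1\ld 0$, so $h^{(0)}=\tilde\Xi^{(n)}(\tilde f)$; shrinking $\U_1$ we may keep all of these in the relevant domains. I claim $[f^{(k)}]=[h^{(k)}]$ in $H_1(\cont_c(\C^m_k,\as)_0)$ for every $k$, proved by downward induction on $k$, the case $k=n$ being the trivial instance since $f^{(n+1)}=h^{(n+1)}$. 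For the step: by Proposition 8.2(4) one has $\Theta^{(k)}(h^{(k)})=h^{(k+1)}$, while $\Theta^{(k)}(f^{(k)})=f^{(k+1)}$ by construction; the hypothesis $[f^{(k+1)}]=[h^{(k+1)}]$ makes $c:=(h^{(k+1)})^{-1}f^{(k+1)}$ a product of commutators, so by Lemma 8.4(3) there is $c'\in\cont_c(\C^m_k,\as)_0$ with $\Theta^{(k)}(c')=c$ and $[c']=e$. Applying Lemma 8.4(1) to the pair $h^{(k)},c'$ produces $\bar c$ with $[\bar c]=e$ and $\Theta^{(k)}(h^{(k)}\bar c)=\Theta^{(k)}(h^{(k)})\cdot c=h^{(k+1)}c=f^{(k+1)}=\Theta^{(k)}(f^{(k)})$; Lemma 8.3 then yields $[f^{(k)}]=[h^{(k)}\bar c]=[h^{(k)}]$. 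The case $k=0$ is assertion (1).

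For (2): set $g:=\Theta^{(n)}_*(f)$, which by the remark preceding the lemma lies in $\cont_c(\C^m_{n+1},\as)_0^{(n+1)}$, i.e. $g$ is independent of $\xi_0\ld\xi_n$. Writing the equations of Proposition 2.2 in local coordinates on $\C^m_{n+1}$ shows that such a $g$ has $\lambda_g=1$, fixes the coordinates $y$, and is exactly a ``$y$-shear'' $(\xi_0,\xi,y)\mapsto(\xi_0+h_0(y),\,\xi-\nabla_y H(y),\,y)$ with $h_0=H-\sum_j y_j\partial_{y_j}H$ for a compactly supported $H=H(y)$; equivalently $g=\fl^{X_H}_1$ is the time-one map of the contact flow of $X_H$ (cf. (2.3)), and these shears form an abelian subgroup with $\fl^{X_{H_1}}_1\fl^{X_{H_2}}_1=\fl^{X_{H_1+H_2}}_1$. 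I then reduce (2) to the single claim that $[g]=e$ in $H_1(\cont_c(\C^m_{n+1},\as)_0)$ (with size control on the witnessing commutators). Indeed, granting this, one lifts the commutators successively down the tower by Lemma 8.4(3) (whose proof turns a product of commutators into one with the same $\Theta$-image and controlled norm), obtaining a small $h\in\corr$ with $\tilde\Theta^{(n)}(h)=g$ and $[h]=e$; part (1) applied to $h$ then gives $[\tilde\Xi^{(n)}\Theta^{(n)}_*(f)]=[\tilde\Xi^{(n)}\tilde\Theta^{(n)}(h)]=[h]=e$.

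It remains to show $[g]=e$ in $H_1(\cont_c(\C^m_{n+1},\as)_0)$, and this is the step I expect to be the genuine obstacle. Using the abelian group law above and a partition of unity on $y$-space, one first reduces to $\supp H$ contained in a small ball $B\s\R^n$, so $\supp g\s(\mathbb S^1)^{n+1}\t B$. The difficulty is that the two standard devices for displaying a localised contactomorphism as a product of commutators both break down here: conjugating $g$ by a contraction toward a point loses control of the higher derivatives of the resulting infinite product, while $g$ cannot be pushed off to infinity since no contactomorphism of $\C^m_{n+1}$ nets a translation or a dilation in the $y$-directions — indeed dilations of the $y$-factor do not extend to globally defined contactomorphisms of the cylinder at all. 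The argument must therefore be a direct construction exploiting the particular contact geometry of $\C^m_{n+1}$ and the compactness of its $\xi$-torus, which supplies the room that $\R^m$ lacks; producing the requisite commutators, with norms controlled by that of $g$, is what I expect to be the heart of the proof.
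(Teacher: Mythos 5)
Your part (1) is correct and is essentially the paper's own argument: a downward induction through the tower of cylinders, using Proposition 8.2(4) and Lemma 8.3 to compare the two images under $\Theta^{(k)}$, and Lemmas 8.4(3) and 8.4(1) to push the commutator-subgroup discrepancy down one level at a time.

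Part (2), however, has a genuine gap, and you have located it yourself: the entire content of the statement is that the ``remainder'' class is trivial, and that is exactly what you leave unproved. Note also that your reduction target, $[f^*]=e$ in $H_1(\cont_c(\C^m_{n+1},\as)_0)$ with $f^*=\Theta^{(n)}_*(f)$, is not what the paper proves and is arguably harder (perfectness of the cylinder groups is not available at this stage); the paper works directly with $g=\tilde\Xi^{(n)}(f^*)\in\corr$ and never exhibits $f^*$ as a product of commutators upstairs. The missing mechanism is a finite self-replication argument: since $f^*$ is a pure shear $(\xi_0,\xi,y)\mapsto(\xi_0+f^*_0(y),\xi+f^*_1(y),y)$, conjugation by the anisotropic scaling $\eta_a$ (which dilates $x_0,x_1\ld x_n$ but fixes $y$, hence is a genuine contactomorphism adapted to this shear) replaces $f^*$ by the shear divided by $a$; splitting supports and inserting translations $\tau_{i,t}$, one reassembles $a$ copies of the scaled-down map in each of the $n+1$ rolled-up directions, producing $g_a\in\corr$ with $\tilde\Theta^{(n)}(g_a)=f^*$ and $[g_a]=[g^{a^{n+2}}]$ in $H_1(\corr)$, while part (1) forces $[g_a]=[g]$. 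The conclusion is then arithmetic: the order $l_0$ of $[g]$ (finite, since $[g^{2^{n+2}-1}]=e$) divides $a^{n+2}-b^{n+2}$ for all $a,b\geq 2$... in fact $l_0\mid l_0^{n+2}-1$, forcing $l_0=1$. So your instinct that neither contraction-to-a-point nor pushing-to-infinity can work is sound, but a third device exists, and it — not a direct commutator construction on $\C^m_{n+1}$ — is the heart of the lemma.
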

\begin{proof}
(1) In view of Proposition 8.2(4) and Lemma 8.3 one has
$[\tilde\Theta^{(n-1)}(f)]=[\Xi^{(n)}(\tilde f)]$. Hence there is
$h_n$ in the commutator subgroup of $\cont_c(\C^m_n,\as)_0$ such
that $\tilde\Theta^{(n-1)}(f)h_n=\Xi^{(n)}(\tilde f)$. By the
above argument and Lemmas 8.3, 8.4(3) and 8.4(1), there is
$h_{n-1}$ in the commutator subgroup of
$\cont_c(\C^m_{n-1},\as)_0$ such that
$\tilde\Theta^{(n-2)}(f)h_{n-1}=\Xi^{(n-1)}\Xi^{(n)}(\tilde f)$.
Continuing this procedure we obtain the claim.

(2) For $f\in\U_1$ put
\begin{equation*}
f^*=\Theta_*^{(n)}(f)\quad\hbox{and}\quad g=
\tilde\Xi^{(n)}\Theta^{(n)}_*(f)=\tilde\Xi^{(n)}(f^*).
\end{equation*}
Notice that in view of Proposition 2.2,
$f^*(\xi_0,\xi,y)=(\xi_0+f^*_0(y), \xi+f^*_1(y),y)$. It follows
from the definition of $\tilde\Xi^{(n)}$ that
$g(\xi_0,\xi,y)=(\xi_0+f^*_0(y), \xi+f^*_1(y),y)$ if
$(\xi_0,\xi)\in([-\frac{1}{2}-\varepsilon,-\frac{1}{2}+\varepsilon]\cup[1-\varepsilon,1+\varepsilon])^{n+1}$
for some $\varepsilon>0$. Furthermore,
$\supp(g)\s([-1,0]\cup[\frac{1}{2},\frac{3}{2}])^{n+1}\t[-2A,2A]^n$
and, due to Proposition 8.2(4), $\tilde\Theta^{(n)}(g)=f^*$. We
have to show that $[g]=e$.

Let us define $g_2\in\corr$ such that
\begin{equation}
[g]=[g_2]=[g^{2^{n+2}}] \end{equation} in $H_1(\corr)$. In the
definition we will use the contactomorphisms $\eta_2,
\tau_{i,t}\in\corr$, $i=0\ld n$, defined in section 2.

First let $ h=\eta_2^{-1}g\eta_2$. Then
$\supp(h)\s[-\frac{1}{2},0]\cup[\frac{1}{4},\frac{3}{4}]\t\mathcal
I_{n,A}$, where $\mathcal
I_{n,A}=([-\frac{1}{2},0]\cup[\frac{1}{4},\frac{3}{4}])^{n}\t[-2A,2A]^n$.
Let us denote
\begin{equation*}
f^*_{\frac{1}{2}}(\xi_0,\xi,y)=\left(\xi_0+\frac{1}{2}f^*_0(y),
\xi+\frac{1}{2}f^*_1(y),y\right).
\end{equation*}
To simplify notation, put $\mathcal
J_{l,\varepsilon}=([-\frac{1}{4}-\varepsilon,-\frac{1}{4}+\varepsilon]\cup
[\frac{1}{2}-\varepsilon,\frac{1}{2}+\varepsilon])^{l}\t\R^n$.
There is $\varepsilon>0$ such that for $ (\xi_0,\xi,y)\in\mathcal
J_{n+1,\varepsilon}$ one has
$h(\xi_0,\xi,y)=f^*_{\frac{1}{2}}(\xi_0,\xi,y)$.

We can write $h=\bar h_0\hat h_0$, where  $\bar h_0=h$ on
$[-\frac{1}{2},0]\t\R^{2n}$, $\bar h_0=\id$ off
$[-\frac{1}{2},0]\t\R^{2n}$, $\hat h_0=h$ on
$[\frac{1}{4},\frac{3}{4}]\t\R^{2n}$, $\hat h_0=\id$ off
$[\frac{1}{4},\frac{3}{4}]\t\R^{2n}$. Put $h_0=\hat
h_0\tau_{0,\frac{1}{2}}\bar h_0\tau_{0,\frac{1}{2}}^{-1}$. Clearly
$[h_0]=[h]$. Observe that $ h_0=f^*_{\frac{1}{2}}$ on
$[\frac{1}{4}-\varepsilon,\frac{1}{2}+\varepsilon]\t\mathcal
J_{n,\varepsilon}$ and $\supp(h_0)\s[0,\frac{3}{4}]\t\mathcal
I_{n,A}$.

In view of the equalities $\eta_2^{-1}
f^*\eta_2=f^*_{\frac{1}{2}}$ (here $f^*$ is regarded as an element
of $\cont(\R^m,\as)$) and $\eta_2^{-1}
\tau_{0,1}\eta_2=\tau_{0,\frac{1}{2}}$, we have
$h_0\tau_{0,\frac{1}{2}}h_0=f^*_{\frac{1}{2}}$ on
$[0,\frac{1}{4}]\t\mathcal J_{n,\varepsilon}$ and, moreover, by
the definition of $\Xi^{(n)}$ we get
$h_0\tau_{0,\frac{1}{2}}h_0=\eta_2^{-1}
\tilde\Xi^{(n-1)}(f^*)\eta_2$ on $[0,\frac{1}{4}]\t\R^{2n}$. Here
$\tilde\Xi^{(n-1)}(f^*)\in\cont(\C^m_1,\as)$ is viewed as an
element of $\cont(\R^m,\as)$ with period 1 w.r.t. $x_0$, so that
$\eta_2^{-1} \tilde\Xi^{(n-1)}(f^*)\eta_2$ is well-defined and can
be also regarded as an element of $\cont(\C^m_1,\as)$ with period
$\frac{1}{2}$ w.r.t. $\xi_0$.

Next we define $k_0=h_0\tau_{0,\frac{1}{2}}
h_0\tau_{0,\frac{1}{2}}^{-1}$. We have
$\supp(k_0)\s[0,\frac{5}{4}]\t\mathcal I_{n,A}$ and
$k_0=f^*_{\frac{1}{2}}$ on
$[\frac{1}{4}-\varepsilon,1+\varepsilon]\t\mathcal
J_{n,\varepsilon}$. Analogously as above,
$k_0\tau_{0,1}k_0=f^*_{\frac{1}{2}}$ on $[0,\frac{1}{4}]\t\mathcal
J_{n,\varepsilon}$ and $k_0\tau_{0,1}k_0=\eta_2^{-1}
\tilde\Xi^{(n-1)}(f^*)\eta_2$ on $[0,\frac{1}{4}]\t\R^{2n}$.

It follows from the definition of $\Theta^{(0)}$ that
 $\Theta^{(0)}(k_0)=f^*_{\frac{1}{2}}$ on $\mathbb
S^1\t\mathcal J_{n,\varepsilon}$ and
$\Theta^{(0)}(k_0)=\eta_2^{-1}\tilde\Xi^{(n-1)}(f^*)\eta_2$ on
$\C^m_1$. One has also that $[k_0]=[h_0^2]=[h^2]=[g^2]$.

Next, starting with $k_0$, we define $\bar h_1, \hat h_1, h_1$ and
$k_1$ analogously as before, but now with respect to the variable
$\xi_1$. It follows that
$\tilde\Theta^{(1)}(k_1)=f^*_{\frac{1}{2}}$ on $(\mathbb
S^1)^2\t\mathcal J_{n-1,\varepsilon}$, and
$\tilde\Theta^{(1)}(k_1)=\eta_2^{-1}\tilde\Xi^{(n-2)}(f^*)\eta_2$
on $\C^m_2$. Moreover, $[k_1]=[k_0^2]=[h^4]=[g^4]$.

Continuing this procedure  we obtain $h_2,k_2\ld h_n,k_n\in\corr$
such that
$[k_n]=[k_{n-1}^{2}]=[k_{n-2}^4]=\ldots=[k_0^{2^{n}}]=[g^{2^{n+1}}]$.
Moreover,  we have that
$\tilde\Theta^{(n)}(k_n)=f^*_{\frac{1}{2}}$ on $\C^m_{n+1}$.

Thus, in order to define $g_2$ satisfying
$\tilde\Theta^{(n)}(g_2)=f^*$ we have to double $k_n$ and we set
$g_2=\tau k_n\tau^{-1}k_n$, where $\tau$ is a suitable
translation, as in Lemma 8.4(1). It follows that $[g_2]=[k_n^2]$
and, in view of (1) of the present lemma, the equalities (8.7)
hold.

Observe that the above procedure may be repeated for any integer
$a>2$ by making use of $\eta_a$ and suitable translations
$\tau_{i,t}$. As a result there exists $ g_a\in\corr$ such that
$\tilde\Theta^{(n)}( g_a)=f^*$ and $[g^{a^{n+2}}]=[g_a]$.
Moreover, by (1) we have $[g_a]=[g]$.

Let $l_0>0$ be the least positive integer such that $[g^{l_0}]=e$.
Then for any integers $a,b>0$ the number $a^{n+2}-b^{n+2}$ is
divided by $l_0$. If $l_0>1$ then $l_0$ divides $l_0^{n+2}-1$, a
contradiction. Thus $l_0=1$, as required.
\end{proof}

\begin{prop} Let $r\geq 2$. If $\U_1=\U_{\phi,\psi,r,A}$ is
a small $C^r$ neighborhood of the identity in $\corr$, there is  a
mapping $\Psi_A=\Psi_{\phi,\psi,r,A}$, called the rolling-up
operator,
\[\Psi_{A}:\cont_{J_A}(\R^m,\as)_0\cap \U_1
\r\cont_{K_{A}}(\R^m,\as)_0,\] which satisfies the following
conditions:
\begin{enumerate}
\item $\Psi_{A}$ is  continuous and $\Psi_A(\id)=\id$.

\item  There are  constants $C_{\phi}$,  $\beta$ and $K$,  and for
any $\rho\geq 2$ there is polynomials with no constant term
$P_{\phi,\psi,\rho}$ such that for any $g\in\dom(\Psi_A)$
\begin{equation*} \mu_{\rho}^*(\Psi_A(g))\leq
\ab K^{\rho}C_{\phi}\mu_{\rho}^*(g)+\ab
P_{\phi,\psi,\rho}(M^{\l}_{\rho-1}(g)).
\end{equation*}

\item There are constants $\beta$ and $C_{\phi,\psi,r}$, and an
admissible polynomial $F_{r,A}$ such that for any
$g\in\dom(\Psi_A)$
\[\mu^*_r\big(\Psi_A(g)\big)\leq
\ab C_{\phi,\psi,r}\mu^*_r(g)+F_{r,A}(M^*_{r-1}(g)).\]

\item For any $g\in\dom(\Psi_A)$ one has $[\Psi_{A}(g)]=[g]$ in
$H_1(\corr)$.

\end{enumerate}
\end{prop}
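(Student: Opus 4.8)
The plan is to roll $f$ successively up the tower of contact cylinders $\C^m_0\subset\C^m_1\subset\cdots\subset\C^m_{n+1}$, applying on level $k$ the auxiliary operator $\Psi_A^{(k)}$ of Proposition 8.5 and using $\hat\Theta^{(k)}$ and $\Xi^{(k)}$ to move between levels. Put $g_0=f$ and, for $k=1\ld n+1$, $g_k=\Theta_*^{(k-1)}(f)=\hat\Theta^{(k-1)}\circ\cdots\circ\hat\Theta^{(0)}(f)\in\cont_c(\C^m_k,\as)_0^{(k)}$, so that $\hat\Theta^{(k)}(g_k)=g_{k+1}$. I would define
\[
\Psi_A(f):=\Psi_A^{(0)}(f)\cdot\tilde\Xi^{(0)}\big(\Psi_A^{(1)}(g_1)\big)\cdot\tilde\Xi^{(1)}\big(\Psi_A^{(2)}(g_2)\big)\cdots\tilde\Xi^{(n-1)}\big(\Psi_A^{(n)}(g_n)\big).
\]
One first checks that, for $\U_1=\U_{\phi,\psi,r,A}$ a sufficiently small $C^r$-neighborhood of the identity (depending on $\phi$, $\psi$, $r$, $A$), every operator above is applied to an admissible argument: by Proposition 8.1(3) (recall $F_{1,A}=0$), Corollary 7.3, and the continuity and identity-preservation of $\Theta^{(k)}$ and $\hat\Theta^{(k)}$, each $g_k$ lies in an arbitrarily small $C^r$-neighborhood of the identity in $\cont_c(\C^m_k,\as)_0^{(k)}$ with support in $J_A^{(k)}$, hence in $\dom(\Psi_A^{(k)})$; therefore $\Psi_A^{(k)}(g_k)$ is supported in $K_A^{(k)}\subset J_A^{(k)}$, every $\Xi^{(j)}$ in $\tilde\Xi^{(k-1)}=\Xi^{(0)}\circ\cdots\circ\Xi^{(k-1)}$ is applicable, and $\tilde\Xi^{(k-1)}\big(\Psi_A^{(k)}(g_k)\big)$ is supported in $K_A^{(0)}=K_A$. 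Thus $\Psi_A(f)\in\cont_{K_A}(\R^m,\as)_0$, and assertion (1) — continuity of $\Psi_A$ and $\Psi_A(\id)=\id$ — follows from the continuity and identity-preservation of all the operators involved together with continuity of composition.

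For (4) I would telescope. Proposition 8.5(4) on level $k$ applied to $g_k$ gives $[g_k]=[\Psi_A^{(k)}(g_k)]+[\Xi^{(k)}(g_{k+1})]$ in $H_1(\cont_c(\C^m_k,\as)_0)$. By Lemma 8.4(2) each $\Xi^{(j)}$ induces a homomorphism on first homology, hence so does $\tilde\Xi^{(k-1)}:\cont_c(\C^m_k,\as)_0\to\corr$; applying it and using $\tilde\Xi^{(k-1)}\circ\Xi^{(k)}=\tilde\Xi^{(k)}$ yields $[\tilde\Xi^{(k-1)}(g_k)]=[\tilde\Xi^{(k-1)}\Psi_A^{(k)}(g_k)]+[\tilde\Xi^{(k)}(g_{k+1})]$ in $H_1(\corr)$. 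Starting from $[f]=[\Psi_A^{(0)}(f)]+[\Xi^{(0)}(g_1)]$ and iterating for $k=1\ld n$ collapses this to
\[
[f]=[\Psi_A^{(0)}(f)]+\sum_{k=1}^{n}[\tilde\Xi^{(k-1)}\Psi_A^{(k)}(g_k)]+[\tilde\Xi^{(n)}(g_{n+1})].
\]
The final summand is the image under $\tilde\Xi^{(n)}$ of the remainder $g_{n+1}=\Theta_*^{(n)}(f)$ living on $\C^m_{n+1}$, and by Lemma 8.6(2) one has $[\tilde\Xi^{(n)}\Theta_*^{(n)}(f)]=e$. Since in $H_1$ the class of a product equals the sum of the classes of its factors, this gives $[\Psi_A(f)]=[f]$, which is (4).

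The norm estimates (2) and (3) I would obtain by bounding each of the $n+1$ factors of $\Psi_A(f)$ separately and combining them via Lemma 3.6(2), all factors being $C^1$-small. For $\tilde\Xi^{(k-1)}\big(\Psi_A^{(k)}(g_k)\big)$: Proposition 8.1(3) together with Corollary 7.3 bounds $\mu^*_\rho(g_k)$ and $M^*_{\rho-1}(g_k)$ in terms of $f$, each of the at most $n$ operations $\hat\Theta^{(j)}$ contributing a factor of the form $\ab K^{c\rho}$; Proposition 8.5(2), resp.\ 8.5(3), then controls $\mu^*_\rho$, resp.\ $\mu^*_r$, under $\Psi_A^{(k)}$; and Proposition 8.2(3), in the appropriate form, controls the at most $n$ descents $\Xi^{(j)}$ (whose inputs are supported in the boxes $K_A^{(\cdot)}$, for which $R\le 2$, so the sharper estimate is available). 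Since $n=\dim M$ is fixed, the accumulated powers of $A$ stay bounded by a single $\ab$ with $\beta$ independent of $\rho$, while the fixed powers of $K$ — now linear in $\rho$ but of bounded slope — are absorbed, after enlarging $K$, into the $\rho$-dependent constant; the lower-order contributions combine into $\ab P_{\phi,\psi,\rho}(M^*_{\rho-1}(f))$, and for (3) the stray terms linear in $M^*_{r-1}(f)$ are absorbed — into the leading term or into an admissible polynomial $F_{r,A}$ — by means of Proposition 3.5 (which on $J_A$ costs a power of $A$) after shrinking $\U_1$. I expect this bookkeeping to be the only real obstacle, and the point to watch is that passing all the way through $\C^m_{n+1}$ must never force an $r$-dependent power of $A$ into the leading coefficient: this is exactly what the fragmentation of the second kind (Proposition 5.7(2)) fails to deliver on $\cont_{J_A}(\R^m,\as)_0$, which is why the rolling-up operator is indispensable.
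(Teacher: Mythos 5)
Your proposal is correct and is essentially the paper's own proof: your product $\Psi_A^{(0)}(f)\cdot\prod_{k=1}^{n}\tilde\Xi^{(k-1)}\Psi_A^{(k)}\Theta_*^{(k-1)}(f)$ is exactly the paper's decomposition $g_0g_1\cdots g_n$, your additive telescoping for (4) via Proposition 8.5(4), Lemma 8.4(2) and Lemma 8.6(2) is the paper's computation read in the opposite direction, and the norm estimates are assembled from the same chain (Propositions 8.1, 8.2, 8.5, Corollary 7.3, Lemma 3.6(2)). No substantive differences to report.
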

\begin{proof}
Let $g\in \cont_{J_A}(\R^m,\as)_0\cap \U_1$. Define
$\Psi_A(g)=g_0g_1\ldots g_n$, where $g_0=\Psi_A^{(0)}(g)$ and, for
$k=1\ld n$,
\[ g_k=\tilde\Xi^{(k-1)}\Psi_A^{(k)}\Theta_*^{(k-1)}(g).\]

In order to show (2) and (3), our first observation is that it
suffices to have for $k=0,1\ld n$
\begin{equation}
\mu_{\rho}^*(g_k)\leq \ab K^{\rho}C_{\phi}\mu_{\rho}^*(g)+\ab
P_{\phi,\psi,\rho}(M^*_{\rho-1}(g)), \end{equation} for all
$\rho\geq 2$, and \begin{equation} \mu^*_r(g_k)\leq
A^{\beta}C_{\phi,\psi,r}\mu^*_r(g),
\end{equation}
 and to apply Lemma 3.6(2). For $k=0$ it is just Proposition 8.5.

For $k=1\ld n$, in view of Propositions 8.2 and 8.5 we get
\begin{equation}
\mu_{\rho}^*(g_k)\leq\ab K^{\rho}
C_{\phi}\mu^*_{\rho}(\Theta^{(k-1)}_*(g))+\ab
P_{\phi,\psi,\rho}(M^*_{\rho-1}(\Theta^{(k-1)}_*(g))).
\end{equation}
On the other hand, by Propositions 7.2(4) and 8.1(3) we have
\begin{align*}
\mu^*_{\rho}(\Theta^{(k-1)}_*(g))&\leq
K^{\rho}C_{\phi} \mu^*_{\rho}(\Theta^{(k-1)}\Theta^{(k-2)}_*(g))+P_{\phi,\rho}(\Theta^{(k-1)}\Theta^{(k-2)}_*(g))\\
&\leq A^{\beta}K_1^{\rho}
C'_{\phi}\mu^*_{\rho}(\Theta^{(k-2)}_*(g))+P'_{\phi,\rho}(M^*_{\rho-1}(\Theta^{(k-2)}_*(g)))\\
&\quad\cdots\\
&\leq  A^{\beta'}K_2^{\rho}
 C''_{\phi}\mu^*_r(g)+ P''_{\phi,\rho}(M^*_{\rho-1}(g)).
\end{align*}
Combining this with (8.10) we obtain (8.8). In order to show (8.9)
for $k=1\ld n$ we proceed analogously, using  Propositions 8.2,
3.5, 8.5 and 8.1, and Corollary 7.3, and possibly changing
constants and shrinking $\U_1$
\begin{align*}
\mu^*_r(g_k) &\leq
\ab C_{\phi,\psi,r}\mu^*_r(\Psi^{(k)}\Theta^{(k-1)}_*(g))\\
&\leq
\ab C_{\phi,\psi,r}\mu^*_r(\Theta^{(k-1)}_*(g))\\
&\leq
\ab C_{\phi,\psi,r}\mu^*_r(\Theta^{(k-1)}\Theta^{(k-2)}_*(g))\\
&\leq
\ab C_{\phi,\psi,r}\mu^*_r(\Theta^{(k-2)}_*(g))\\
&\quad\cdots\\
&\leq
\ab C_{\phi,\psi,r}\mu^*_r(\Theta^{(0)}(g))\\
&\leq  \ab
 C_{\phi,\psi,r}\mu^*_r(g).
\end{align*}

(4) By Lemmas 8.6(2) and 8.4(2), and  Proposition 8.5(4) , we have
\begin{align*}
[\Psi_A(g)]&=[g_0g_1\cdots g_n]\\
&=[ g_0g_1\cdots g_n\cdot\tilde\Xi^{(n)}\Theta^{(n)}_*(g)]\\
&=[ g_0g_{1}\cdots
g_{n-1}\cdot\tilde\Xi^{(n-1)}\Psi_A^{(n)}\Theta^{(n-1)}_*(g)
\cdot\tilde\Xi^{(n-1)}\Xi^{(n)}\hat\Theta^{(n)}\Theta^{(n-1)}_*(g)
]\\
&=[g_0g_{1}\cdots
g_{n-1}\cdot\tilde\Xi^{(n-1)}\big(\Psi_A^{(n)}\Theta^{(n-1)}_*(g)\cdot\Xi^{(n)}\hat\Theta^{(n)}\Theta^{(n-1)}_*(g)
\big)]\\
&=[g_0g_{1}\cdots
g_{n-1}\cdot\tilde\Xi^{(n-1)}\Theta^{(n-1)}_*(g)]\\
&\quad\cdots \\
&=[g_0\cdot\Xi^{(0)}\Theta^{(0)}_*(g)]\\
&=[\Psi_A^{(0)}(g)\cdot\Xi^{(0)}\hat\Theta^{(0)}(g)]=[g].
\end{align*}
\end{proof}

\begin{rem}
It is easy to check that the proof of Lemma 8.6(2) and,
consequently, of Proposition 8.7(4) fails in the case
$\diff^r_c(\R^m)_0$, since Proposition 2.2 is not true for
diffeomorphisms. Thus the proof of Theorem 1.1 is not valid for
$\diff^r_c(\R^m)_0$.
\end{rem}

\section{Proof of Theorem 1.1}

Let $A$ be a large positive  integer  which will be fixed later
on, and let $I_A$, $J_A$ and $K_A$ be the intervals in $\R^m$
given by (6.1), (6.3) and (8.1), resp.
 Let us define
\begin{equation*}
\L=\{u\in\cc_{I_A}(\rr)\,:\,\|D^{r+1}u\|\leq\epsilon_r,\forall
r\geq r_0\},
\end{equation*}
where $r_0$ (large), $\epsilon_{r_0}$ ( small),  and $\epsilon_r$
for $r>r_0$ (large) will be fixed in due course.

Observe that $\L$  is a convex and compact subset of a locally
convex space. Consequently, in view of Schauder-Tychonoff's
theorem every continuous map $\vartheta:\L\rightarrow \L$ has a
fixed point.

Let $f_0\in\corr$. We have to show that $f_0$ belongs to the
commutator subgroup of $\corr$. According to Lemma 5.2 we may
assume that $\supp(f_0)\s I_A$. Furthermore, since $\corr$ is a
topological group, we may have $\mu^{\l}_{r_0}(f_0)$  arbitrarily
small.

Now we will define a continuous operator $\vartheta: \L\r \L$ in
the following ten steps:

\begin{enumerate}
\item For any $u\in \L$ take $f\in\cont_{I_A}(\rr,\as)_0$ such
that $\Phi_A(f)=u_f=u$.

\item Compose $f$ with $f_0$.

 \item Use
a fragmentation  of the second kind for $g=ff_0$ (Proposition
5.7). We have a decomposition $g=g_1\cdots g_{a_n}$, where
$a_n=(4A+1)^{n}$, and each $g_{\kappa}$ is supported in some
interval
\[([-2,2]^{n+1}\t[k_1-1,k_1+1]\t\cdots\t[k_{n}-1,k_{n}+1])\cap
I_{A},\] with  integers $k_i$  such that $|k_i|\leq 2A$, $i=1\ld
n$.

\item Use the operation of shifting supports of contactomorphisms
described in section 6. For any $\kappa=1\ld \ss$ define
\[
\tilde
g_{\kappa}=\sigma_{n,t_{n}}\sigma_{n-1,t_{n-1}}\cdots\sigma_{1,t_{1}}
g_{\kappa}\sigma_{1,t_{1}}^{-1}\cdots
\sigma_{n-1,t_{n-1}}^{-1}\sigma_{n,t_{n}}^{-1},\] for suitable
$(t_{1}\ld t_{n})\in\R^n$ depending on $\kappa$ in such a way that
$\supp(\tilde g_{\kappa})\s [-A^2,A^2]\t[-2,2]^{2n}$ for all
$\kappa$. Here we assume that $|t_i|\leq 2A$, $i=1\ld n$, and
$A>5n$.

 \item
For any $\kappa=1\ld \ss$ define $ h_{\kappa}=\eta_A\chi_{A}\tilde
g_{\kappa}\chi_{A}^{-1}\eta_A^{-1}$. It follows that
  $\supp( h_{\kappa})\s J_{A}$.

 \item Use the rolling-up operator
$\Psi_{A}$ described in Proposition 8.7, and define $\bar
h_{\kappa}=\Psi_{ A}( h_{\kappa})$. Observe that $\supp(\bar
h_{\kappa})\s K_{A}$.

\item Make a fragmentation of the second kind in $K_A$  in the
$x_i$-directions, $i=1\ld n$, c.f. Proposition 5.7. We write for
$\bar a_n=\ss^5$
\[\bar h_{\kappa}=\prod_{\iota=1}^{\bar a_n}\bar h_{\kappa\iota}.\]

 \item Use the operation of shifting
supports of contactomorphisms in the $x_i$-directions by means of
the translations $\tau_i$ , $i=1\ld n$ (c.f. section 2). For any
$\kappa$ and $\iota$ define $\tilde h_{\kappa\iota}$ instead of
$\bar h_{\kappa\iota}$ with $\supp(\tilde h_{\kappa\iota})\s I_A$.
All the norms of $\tilde h_{\kappa\iota}$ are the same as the
norms of $\bar h_{\kappa\iota}$ as we used  translations.

\item Take the product
$h=\prod_{\kappa=1}^{\ss}\prod_{\iota=1}^{\bar a_n}\tilde
h_{\kappa\iota}$.

\item Take $u_h=\Phi_A(h)$.
\end{enumerate}

 Then we put $\vartheta(u)=u_h$. In view of
the description of particular steps of the construction,
$\vartheta$ is continuous. It remains to show that for a suitable
choice of $r_0$, $A$, and $\epsilon_r$ for $r\geq r_0$, the
operator $\vartheta$ takes $\L$ into itself.

In fact, suppose that $u=u_f\in \L$ is a fixed point of
$\vartheta$, i.e, $u_h=u_f$. Then $h=f$ and we have in
$H_1(\corr)$
\begin{align*} [ff_0]&=[g]=[g_1\cdots g_{\ss}]=[ g_1]\cdots
[g_{\ss}]=[\tilde g_1]\cdots
[\tilde g_{\ss}]\\
&=[h_1]\cdots [h_{\ss}]= [\bar h_1]\cdots [\bar h_{\ss}]= [\bar
h_{11}]\cdots [\bar h_{\ss\bar a_n}]\\
&= [\tilde h_{11}]\cdots [\tilde h_{\ss\bar a_n}]=[\tilde
h_{11}\cdots \tilde h_{\ss\bar a_n}]=[h]=[f],\end{align*} and
therefore $[f_0]=e$. This means that $f_0$ is a product of
commutators.

Now we wish to define $r_0$, $A$ and $\epsilon_r$ for $r\geq r_0$.
This will be done in view of the properties of the consecutive
operations in the construction of $\vartheta$.

Suppose $r_0\geq 2$. In view of Propositions 4.6, 3.5, 5.7, 6.1
and 8.7, and Lemma 3.6 it follows the existence of a $C^2$
neighborhood $\V_2=\V_{\phi,\psi,r_0,A}$ of zero in
$\cc_{I_A}(\rr)$, of  constants $C_{\phi,\psi,r_0}$ and
$\beta=\beta(m)>0$, and of admissible polynomials $F^i_{r_0,A}$,
$i=1,3$, and $F^2_{\phi,\psi,r_0,A}$,  such that for a
sufficiently small $\epsilon_{r_0}$   we have
\begin{align}
\begin{split}
\|D^{r_0+1}u_h\|&\leq A^{\beta-r_0}
C_{\phi,\psi,r_0}\|D^{r_0+1}u\|+F^1_{r_0,A}(\mu^*_{r_0}(f))\\
&+F^2_{\phi,\psi,r_0,A}\big(\sup_{\kappa}\mu^*_{r_0}(
h_{\kappa})\big)+F^3_{r_0,A}\big(\sup_{\kappa,\iota}\mu^*_{r_0}(\tilde
h_{\kappa\iota})\big),
\end{split}
\end{align}
for all $u\in\V_{2}$ with $\|D^{r_0+1}u\|\leq\epsilon_{r_0}$. Here
we assume that $\mu_{r_0}^*(f_0)$ is small enough. We assume as
well that $\sup_{\kappa,\iota}\mu_i^*(\tilde
h_{\kappa\iota})<(A^{20n}r_0)^{-1}$, where $i=0,1$, by choosing
$\epsilon_{r_0}$ sufficiently small. Then we have
\begin{equation}\big(\big(1+\sup_{\kappa,\iota}\mu_0^*(\tilde h_{\kappa\iota})\big)
    \big(1+\sup_{\kappa,\iota}\mu_1^*(\tilde
    h_{\kappa\iota})\big)\big)^{A^{20n}r_0}<6,\end{equation}
and we may apply Lemma 3.6(2) in order to obtain (9.1).

 Fix $r_0>\beta$ and
choose $A$ so large that
$A^{\beta-r_0}C_{\phi,\psi,r_0}<\frac{1}{4}$. It follows from
Definition 3.2  that, possibly taking $\epsilon_{r_0}$ smaller, we
have $F^1_{r_0,A}(\mu^*_{r_0}(f))<\frac{\epsilon_{r_0}}{4}$,
$F^2_{\phi,\psi,r_0,A}\big(\sup_{\kappa}\mu^*_{r_0}(
h_{\kappa})\big)<\frac{\epsilon_{r_0}}{4}$, and
$F^3_{r_0,A}\big(\sup_{\kappa,\sigma}\mu^*_{r_0}(\tilde
h_{\kappa\iota})\big)<\frac{\epsilon_{r_0}}{4}$, whenever
$\|D^{r_0+1}u\|\leq\epsilon_{r_0}$. We may also assume that
$\|D^{r_0+1}u\|<\epsilon_{r_0}$ yields $u\in\V_{2}$. Then by (9.1)
$\|D^{r_0+1}u_h\|\leq\epsilon_{r_0}$, if
$\|D^{r_0+1}u\|\leq\epsilon_{r_0}$.

Next, we define $\epsilon_r$ for all $r>r_0$ inductively. Suppose
we have defined $\epsilon_{r_0}\ld \epsilon_{r-1}$.

In view of Propositions 4.6, 3.5, 5.7, 6.1 and 8.7, Lemma 3.6, and
the inequality (9.2) rewritten for $r$ with $6^r$ on the r.h.s.,
there exist  constants $\beta>0$ and $K=K_{\phi,\psi}$, and
polynomials $P_{\phi,\psi,r,A}$ without constant term such that
for all $u\in\V_{2}$  we have
\begin{equation}
\|D^{r+1}u_h\|\leq
A^{\beta-r}K^r\|D^{r+1}u\|+P_{\phi,\psi,r,A}(\sup_{s=0,1\ld
r}\|D^su\|).
\end{equation}
 Enlarging $A$ if necessary, suppose $A>K^{r_0}$.
Hence we have $A^{\beta-r}K^r<\frac{1}{4}$. Put
$b_r=P_{\phi,\psi,r,A}(\sup_{s=0,1\ld r}\|D^su\|)$, where
$\|D^{s+1}u\|\leq\epsilon_s$ for $s=r_0\ld r-1$.
 Then (9.3) can be rewritten as
\[\|D^{r+1}u_h\|\leq
\frac{1}{4}\|D^{r+1}u\|+b_r.\] Define $\epsilon_r=2b_r$. It
follows that $\|D^{r+1}u_h\|\leq\epsilon_r$ whenever
$\|D^{r+1}u\|\leq\epsilon_r$, as required.

\section{Proof of Corollary 1.2}

We have to check Epstein's axioms [4] for some basis of open sets
$\U$ of $M$ and $G=\con$:
\begin{enumerate}
\item If $U\in\U$ and $g\in G$ then $g(U)\in\U$. \item $G$ acts
transitively on $\U$. \item Let $g\in G$, $U\in\U$ and let
$\V\s\U$ be a covering of $M$. Then there are $s\geq 1$, $g_1\ld
g_s\in G$ and $V_1\ld V_s\in\V$ such that $g=g_1\ldots g_s$,
$\supp(g_i)\s V_i$ and $\supp(g_i)\cup g_{i-1}\ldots g_1(\overline
U)\neq M$ for $i=1\ld s$.
\end{enumerate}
In fact, let $U$ be any open ball in $M$ and $\U=\{g(U):g\in G\}$.
By using $\chi_A$, $\tau_{i,t}$, $i=0\ld n$, and $\sigma_{i,t}$,
$i=1\ld n$, see section 2, it is easily seen that $\U$ is a basis
and (2) is fulfilled. In view of Lemma 5.2 a standard reasoning
shows (3). Thus, due to [4] and Theorem 1.1, $\con$ is simple.

\section{Final remarks}
Let $G$ be a group and let $g\in [G,G]$. The commutator length
$cl_G(g)$ of $g$ is 0 if $g=e$, and is the least positive integer
$N$ such that $g=[g_1,h_1]\cdots[g_N,h_N]$ for some $g_i,h_i\in
G$, $i=1\ld N$, otherwise. Then $cl_G$ is a conjugation-invariant
norm on $[G,G]$, c.f. [3]. In the paper [3] by Burago, Ivanov and
Polterovich and in certain references therein a description of a
role played by conjugation-invariant norms on groups of geometric
origin is given.

As a trivial consequence of Theorem 1.1 we have
\begin{cor}
The commutator length is a conjugation-invariant norm on $\con$.
\end{cor}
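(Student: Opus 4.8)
The plan is to derive the statement directly from Theorem 1.1 together with the elementary observation, recalled just above (c.f. [3]), that for \emph{any} group $G$ the commutator length $cl_G$ is a conjugation-invariant norm on the subgroup $[G,G]$. First I would record the proofs of the four defining properties of such a norm, all of which are formal: (i) $cl_G(g)=0$ if and only if $g=e$, which is immediate from the definition; (ii) the symmetry $cl_G(g^{-1})=cl_G(g)$, which follows from the identity $[a,b]^{-1}=[b,a]$, so that inverting a product of $N$ commutators yields again a product of $N$ commutators; (iii) the subadditivity $cl_G(gg')\le cl_G(g)+cl_G(g')$, obtained by concatenating the two given factorizations into commutators; and (iv) the conjugation invariance $cl_G(hgh^{-1})=cl_G(g)$, which follows from $h[a,b]h^{-1}=[hah^{-1},hbh^{-1}]$, since conjugation then carries a length-$N$ commutator expression to a length-$N$ commutator expression.

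With this in hand, I would simply invoke Theorem 1.1. Since $\con$ is perfect, $\con=[\con,\con]$; hence $cl_{\con}$ is finite on all of $\con$, and the four properties listed above hold throughout $\con$. This is exactly the assertion that $cl_{\con}$ is a conjugation-invariant norm on $\con$, proving the corollary.

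There is no genuine obstacle here; the entire content of the corollary is carried by Theorem 1.1. The point worth stressing is that without perfectness the function $cl_{\con}$ would a priori take the value $+\infty$ on $\con\setminus[\con,\con]$, so that it is precisely the conclusion of Theorem 1.1 that promotes $cl_{\con}$ from a conjugation-invariant norm on the commutator subgroup to a finite-valued conjugation-invariant norm on the whole group $\con$.
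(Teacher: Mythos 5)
Your proposal is correct and follows exactly the paper's (essentially one-line) argument: the general fact that $cl_G$ is a conjugation-invariant norm on $[G,G]$ is recalled from [3], and Theorem 1.1 gives $\con=[\con,\con]$, so the norm is defined on all of $\con$. Writing out the four formal properties is a harmless elaboration of what the paper leaves to the cited reference.
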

It is known from several recent papers that the theorem of Banyaga
[1]  plays a clue role in the symplectic topology and geometry in
the sense that some invariants are expressed in terms of the
commutator length of related groups. It seems that, thanks to
Corollary 11.1, a similar role could be played by $cl_{\con}$ in
the contact topology and geometry.

Recall that a group is said to be {\it bounded} if it is bounded
w.r.t. any bi-invariant metric on it or, equivalently, any
conjugation-invariant norm on it is bounded. Recently, the problem
of boundedness was solved in many cases of $\diff_c(M)_0$, and the
solutions depend on the topology of $M$ (c.f. [3], [20]). In view
of Corollary 11.1 it is interesting to know whether $\con$ is
bounded and how it depends on $M$.

Another possible applications are related to Haefliger's
classifying spaces of contact foliations. Let
$B\overline{\cont}_c(M,\a)$ be the classifying space for the
foliated $C^{\infty}$ products with compact support with
transverse contact form. It is well known that
$B\overline{\cont}_c(M,\a)$ is the homotopy fiber of the mapping
\[B\cont_c(M,\a)^{\delta}_0\r B\cont_c(M,\a)_0,\] where the
superscript $\delta$ denotes the discrete topology. By an argument
similar to the  proof of Theorem 1.1 we have the following
\begin{thm}
 $H_1(B\overline{\cont}_c(M,\a);\Z)=H_1(\widetilde{\cont_c(M,\a)})=0$, where tilde
 indicates the universal covering group.
\end{thm}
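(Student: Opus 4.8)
The first equality in the statement is the standard description of $H_1$ of Haefliger's classifying space. Since $\con$ is locally contractible, $B\overline{\cont}_c(M,\a)$ is connected and $\pi_1(B\overline{\cont}_c(M,\a))$ is the universal covering group $\widetilde{\con}$ of $\con$ (this is read off from the fibration sequence $\con\simeq\Omega B\cont_c(M,\a)_0\to B\overline{\cont}_c(M,\a)\to B\cont_c(M,\a)^{\delta}_0$, using that $B\cont_c(M,\a)^{\delta}_0$ is a $K(\con,1)$ and that $\con$ is connected); hence by the Hurewicz theorem $H_1(B\overline{\cont}_c(M,\a);\Z)=H_1(\widetilde{\cont_c(M,\a)})$ equals the abelianization of the discrete group $\widetilde{\con}$. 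Thus it suffices to prove that $\widetilde{\con}$ is perfect. Since $\widetilde{\con}$ is path connected it is generated by any neighbourhood of the identity, so it is enough to show that every $\widetilde f_0\in\widetilde{\con}$ sufficiently $C^{r_0}$-close to the identity lies in $[\widetilde{\con},\widetilde{\con}]$; such a $\widetilde f_0$ is represented by a compactly supported smooth contact isotopy $\{f_{0,t}\}_{t\in[0,1]}$, $f_{0,0}=\id$, which we take $C^{r_0}$-small uniformly in $t$, and by the isotopy version of the fragmentation of the first kind (Lemma 5.2) we may assume this isotopy is supported in one Darboux chart, so that $\widetilde f_0$ lies in (the image of) $\widetilde{\corr}$ and it remains to write it as a product of commutators there.

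The plan is to carry out the whole construction of Sections 3--9 with contactomorphisms replaced everywhere by homotopy classes of compactly supported contact isotopies, i.e. to work in the covering groups $\widetilde{\cork}$, $k=0,1\ld n+1$. Every operator used there --- the chart $\Phi_A$ of Section 4, the fragmentations of the second kind (Propositions 5.6 and 5.7), the support-shifting operators of Section 6, the correcting contactomorphisms of Section 7, and the rolling-up operators $\Psi_A^{(k)}$ and $\Psi_A$ of Section 8 --- is continuous and fixes the identity, hence acts on contact isotopies pointwise in $t$ and again produces contact isotopies; and all the norm estimates of Sections 3--8 hold uniformly in $t$ as soon as the initial isotopy is $C^{r_0}$-small throughout, which is precisely the hypothesis imposed (now uniformly in $t$) in Section 9. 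The homological identities of Sections 7 and 8 --- notably Lemmas 8.3, 8.4, 8.6 and Propositions 8.5(4) and 8.7(4) --- must be re-read as identities in $H_1(\widetilde{\cork})$: each uses only the conjugation invariance of the class in $H_1$ and certain explicit auxiliary maps (the translations $\tau_{i,t},\sigma_{i,t}$, the homotheties $\chi_A,\eta_a$ of Section 2, and their bump-function truncations), and one equips each such map once and for all with the canonical isotopy joining it to the identity coming from the corresponding flow of Section 2; the required relations then follow from conjugation invariance in $H_1(\widetilde{\cork})$ together with the rule $[\widetilde{ab}]=[\widetilde a]+[\widetilde b]$ in $H_1$ of the group of isotopy classes, where $\widetilde{ab}$ is the concatenation of the chosen isotopy of $a$ with the $a$-translate of that of $b$.

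Granting this, the ten-step scheme of Section 9 runs as before, the operator $\vartheta$ being applied to a path $t\mapsto u_t$ pointwise. Schauder--Tychonoff's theorem is now applied in a convex compact subset of the space of paths into $\L$ (paths with a fixed uniform modulus of continuity in $t$, which is convex and, by Arzel\`a--Ascoli, compact; the modulus is preserved because all the operators involved are continuous with the controlled estimates of Sections 3--8). A fixed point is a contact isotopy $\{f_t\}$ whose associated output isotopy $\{h_t\}$ is homotopic rel endpoints to $\{f_t\}$, i.e. $\widetilde h=\widetilde f$ in $\widetilde{\corr}$. Reading the chain of equalities at the end of Section 9 in $H_1(\widetilde{\corr})$ then gives $[\widetilde f_0]=[\widetilde f\,\widetilde f_0]-[\widetilde f]=[\widetilde h]-[\widetilde f]=e$, and pushing this forward through the fragmentation of the first kind yields $[\widetilde f_0]=e$ in $H_1(\widetilde{\con})$, which completes the proof.

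The main obstacle is the verification, for each homological identity in the proof of Theorem 1.1, that it lifts from $H_1(\corr)$ to $H_1(\widetilde{\corr})$; the delicate instances are exactly the conjugations by, and compositions with, the non-compactly-supported auxiliary contactomorphisms (and their truncations), for which one must track carefully the chosen isotopy joining each of them to the identity. Once these canonical isotopies --- all furnished by the explicit contact flows of Section 2 --- are fixed, the combinatorics of Sections 7--9 is unaffected. A secondary, essentially routine, point is the set-up of the convex compact domain for Schauder--Tychonoff at the level of paths, which is harmless because the estimates of Section 9 are already uniform in the data. As noted in Remark 8.8, the same argument fails for $\diff^r_c(\R^m)_0$ precisely at the step corresponding to Lemma 8.6(2).
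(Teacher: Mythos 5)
Your overall strategy --- lift the machinery of Sections 3--9 to compactly supported contact isotopies and rerun the Section 9 argument in the path/covering group --- is exactly the paper's, and you correctly flag the two pressure points. But as written there is a genuine gap at the Schauder--Tychonoff step. You apply the fixed point theorem to the path-level operator on the set of paths into $\L$ with a fixed modulus of continuity, asserting the modulus is preserved "because all the operators involved are continuous with the controlled estimates of Sections 3--8". Those estimates bound norms of outputs; they are not uniform-continuity estimates for $\vartheta$ jointly in $(t,u)$, and continuity of each $\vartheta_t$ does not imply that $t\mapsto\vartheta_t(u_t)$ admits the same modulus as $t\mapsto u_t$. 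The paper sidesteps this entirely: it applies Schauder--Tychonoff only to $\vartheta_1$ on the original compact convex set $\L$, extends the fixed point $u_{f_1}$ to a (special) path $\{f_t\}$, writes the path-level output as $\{k_t\}=\{f_t\}\cdot\{\hat h_t\}\cdot\{g_t\}$ with $\{g_t\}$ in the commutator subgroup of $\mathcal P\corr$, and then uses $k_1=f_1$ together with the standing assumption that $\U_1^{-1}\cdot\U_1$ lies in a contractible neighborhood of the identity to conclude that $\{f_t\}^{-1}\cdot\{k_t\}$ is a contractible loop, whence $\big[\{\hat h_t\}\big]_{\sim}=\big[\{g_t\}^{-1}\big]_{\sim}$ in $\widetilde{\corr}$. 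No compactness of any path space is needed.

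Second, your prescription of "the canonical isotopy coming from the corresponding flow" for the auxiliary maps $\tau_{i,t}$, $\sigma_{i,t}$, $\chi_a$, $\eta_a$ does not suffice for the homological lemmas. In Lemma 8.3, for instance, the conjugating element $\Lambda_k=(\tau_kg)^l(\tau_kf)^{-l}$ uses that $\tau_k$ is the full unit translation; if at time $s<1$ your isotopy has only translated by $s$, the required $l$ blows up as $s\to 0$ and the identity $\Lambda_k\tau_kf\Lambda_k^{-1}=\tau_kg$ has no time-$s$ analogue. The paper's device is to work throughout with \emph{special} paths (trivial on $[0,\frac{1}{2}]$) and to choose the auxiliary isotopies to be constant, equal to the target map, on $[\frac{1}{2},1]$: then on $[0,\frac{1}{2}]$ everything is the identity and on $[\frac{1}{2},1]$ the original arguments apply verbatim. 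Checking that $\Theta^{(k)}$, $\Xi^{(k)}$, $\Psi_A^{(k)}$ preserve special paths is what actually makes Lemmas 8.3, 8.4, 8.6 and Proposition 8.7(4) lift to $H_1(\mathcal P\corr)$; your sketch leaves this unresolved.
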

 For the proof, see appendix.

 Up to my knowledge no version of the Thurston-Mather isomorphism
 (c.f. [15], [13], [2], [18], [19])
is known for $\con$. It seems likely that such a version could be
established, but a possible proof seems to be hard. This would
give information on the connectedness of  Haefliger's classifying
space for contact foliations.

 In [18] and [19] Tsuboi discussed the problem of the
connectedness of the Haefliger classifying spaces. It is  likely
that Theorem 1.1 is still true for the group $\cont^r_c(M,\a)$ of
contactomorphisms of class $C^r$ with $r$ large.

Observe that Theorem 11.2 reveals further fundamental difference
between the symplectic and the contact geometries. As it was
mentioned in the introduction the flux homomorphism plays a
crucial role in the geometry of  symplectic forms ([1], [2], [14])
(as well as in case of regular Poisson manifolds, c.f. [15], and
of locally conformal symplectic manifolds, c.f. [7]). The  domain
of the flux is the universal covering group of the group in
question. In view of Theorem 11.2 a possible analog of such a
homomorphism is necessarily trivial in the contact case.

\bigskip

\centerline{APPENDIX: THE PROOF OF THEOREM 11.2}

\medskip

Since the first equality is well-known it suffices to show the
second.

Let $G$ be a topological group. Denote by $\mathcal PG$ the
totality of paths  $\gamma:I\r G$ with $\gamma(0)=e$, where
$I=[0,1]$. The path group $\mathcal PG$ is a topological group
with the compact-open topology. Likewise, for a locally convex
vector space $V$ let $\mathcal PV$ be the totality of paths
$\gamma:I\r V$ with $\gamma(0)=0$. Then $\mathcal PV$ is a locally
convex vector space. If $X\s G$ (resp. $Y\s V$) are subsets
containing $e$ (resp. 0) then the subsets $\mathcal PX\s \mathcal
PG$ (resp. $\mathcal PY\s \mathcal PV$) are defined in the obvious
way.

Next, the symbol $\mathcal P_0G$ (resp. $\mathcal P_0V$) will
stand for the totality of  $\{f_t\}_{t\in I}\in\mathcal PG$ (resp.
$\{f_t\}_{t\in I}\in\mathcal PV$) such that $f_t=e$ (resp.
$f_t=0$) for $0\leq t\leq\frac{1}{2}$. The elements of $\mathcal
P_0G$ and $\mathcal P_0V$ will be called \wyr{special} paths. Note
that the subsets $\mathcal P_0X\s \mathcal P_0G$ (resp. $\mathcal
P_0Y\s \mathcal P_0V$) are well-defined for subsets $X\s G$ (resp.
$Y\s V$) with $e\in X$ (resp. $0\in Y$).

We have to show that $\widetilde{\cont_c(M,\a)}=\mathcal P\con
/\sim$ is a perfect group. Here $\sim$ denotes the relation of the
homotopy rel. endpoints. It is clear that for every
$\big[\{g_t\}\big]_{\sim},
\big[\{h_t\}\big]_{\sim}\in\widetilde{\cont_c(M,\a)}$, the classes
of them in $H_1(\widetilde{\cont_c(M,\a)})$ are equal
 whenever
$[\{g_t\}]=[\{h_t\}]$ in $H_1(\mathcal P\cont_c(M,\a))$. Take
arbitrarily
$\big[\{h_t\}\big]_{\sim}\in\widetilde{\cont_c(M,\a)}$, where
$\{h_t\}\in\mathcal P\con$. In view of Lemma 5.2 we may and do
assume that $\{h_t\}\in\mathcal P\cont_{I_A}(\R^m,\as)_0$. Observe
that Lemma 5.2 is still valid for the group  $\mathcal P_0\con$
instead of $\mathcal P\con$ and this fact is also used in the
proof of Lemma 8.4(3) for special paths.

In order to show that $\big[\{h_t\}\big]_{\sim}$ belongs to the
commutator subgroup of $\widetilde{\corr}$ we introduce suitable
changes in the subsequent sections.

In section 2 we single out special elements of $\mathcal P\corr$
as follows (c.f. (1)-(5) in section 2).  Abusing the notation they
will be designated as before. Namely,
$\tau_{i,t}=\{(\tau_{i,t})_s\}_{s\in I}$,
$\sigma_{i,t}=\{(\sigma_{i,t})_s\}_{s\in I}$,
$\chi_a=\{(\chi_a)_s\}_{s\in I}$, $\eta_a=\{(\eta_a)_s\}_{s\in I}$
are fixed elements of $\mathcal P\corr$ such that
$(\tau_{i,t})_s=\tau_{i,t}$, $(\sigma_{i,t})_s=\sigma_{i,t}$,
$(\chi_a)_s=\chi_a$ and $(\eta_a)_s=\eta_a$ for all
$\frac{1}{2}\leq s\leq 1$.

In section 4 the chart $
\Phi_{A}:\cont_{E}(\C_k^{m},\as)\supset\U_1\ni f\mapsto
u_f\in\V_2\s
 \CC^{\infty}_{E}(\C_k^m)$ induces the homeomorphism
 \[ \mathcal P\Phi_{A}:\mathcal P\cont_{E}(\C_k^{m},\as)
 \supset\mathcal P\U_1\ni \{f_t\}\mapsto\{ u_{f_t}\}\in\mathcal P\V_2\s
 \mathcal P\CC^{\infty}_{E}(\C_k^m).\]
 Notice that $\mathcal P\Phi_A$ preserves the subspaces of special
 paths.
We may and do assume that $\U_1^{-1}\cdot\U_1$ is contained in a
contractible neighborhood of the identity.

In section 5 by making use of $\mathcal P\Phi_{A}$ we define
$\{f_t\}^{\psi}$ for $\{f_t\}\in\mathcal P\U_1$ by putting
$\{f_t\}^{\psi}=\{f_t^{\psi}\}$.  Observe that
$\{f_t\}^{\psi}\in\mathcal P_0\U_1$ whenever $\{f_t\}\in\mathcal
P_0\U_1$, and  Propositions 5.4 holds. Proposition 5.7 holds for
isotopies in the sense that there is a decomposition for isotopies
and the estimates (1), (2) are satisfied for the corresponding
members of isotopies with the same constants and polynomials.
Next, Proposition 6.1 and the inclusion $(6.2)$ are still valid
for $\mathcal P_0\cont_{I_A}(\R^m,\as)_0$ in view of our new
definition of $\tau_{i,t}$, $\sigma_{i,t}$, $\chi_a$ and $\eta_a$
(with an analogous remark as for 5.7). Also for any
$\{f_t\}\in\mathcal P_0\cont_{
E^{(k+1)}_A}(\C^m_{k+1},\ac)_0^{(k)}$ there is $\{\hat
f_t\}\in\mathcal P_0\cont_{
E^{(k+1)}_A}(\C^m_{k+1},\ac)_0^{(k+1)}$ as in section 7.

In section 8 we have the operators $\mathcal P\Theta^{(k)}$ and
$\mathcal P\Xi^{(k)}$ on the relevant spaces of paths induced by
$\Theta^{(k)}$ and $\Xi^{(k)}$, resp. It is important that these
operators descend to the operators $\mathcal P_0\Theta^{(k)}$ and
$\mathcal P_0\Xi^{(k)}$ on the corresponding spaces of special
paths.  Lemmas 8.3, 8.4 and 8.6 remain valid on the spaces of
special paths and their proofs are completely analogous. All these
prerequisites lead to the rolling-up operator
\[\mathcal P_0\Psi_{A}:\mathcal P_0\cont_{J_A}(\R^m,\as)_0\cap \mathcal P_0\U_1
\r\mathcal P_0\cont_{K_{A}}(\R^m,\as)_0,\] which satisfies an
analogue of Proposition 8.7 (with a similar remark as the above
for 5.7). In particular, for any $\{g_t\}\in\dom(\mathcal
P_0\Psi_A)$ one has $[\mathcal P_0\Psi_{A}(\{g_t\})]=[\{g_t\}]$ in
$H_1(\mathcal P\corr)$.

In the proof of Theorem 11.2 we will use spaces of special paths
and the proof is completely analogous. Fix $A$, $r_0$ and
$\epsilon_r$ for $r\geq r_0$ as in section 9. Suppose that
$\mathcal L$ is  as in section 9. Then  $\mathcal P_0\L$ is a
convex  subset of the locally convex vector space $\mathcal
P_0\cc_{I_A}(\R^m)$. We may and do assume that $\sup_{t\in
I}\mu^*_{r_0}(\{h_t\})$ is sufficiently small since $\mathcal
P\cont_{I_A}(\R^m,\as)_0$ is a topological group. Moreover, there
is $\{\hat h_t\}\in\mathcal P_0\cont_{I_A}(\R^m,\as)_0$ such that
$\sup_{t\in I}\mu^*_{r_0}(\{\hat h_t\})$ is also sufficiently
small and $\big[\{\hat h_t\}\big]_{\sim}=\big[\{
h_t\}\big]_{\sim}$.

We define $\mathcal P_0\vartheta:\mathcal P_0\L\r\mathcal P_0\L$
by the formula $\mathcal
P_0\vartheta(\{u_t\})=\{\vartheta_t(u_t)\}$, where
$\vartheta_t:\L\r\L$ is determined by $\hat h_t$. Then there
exists $\{f_t\}\in(\mathcal P\Phi_A)^{-1}\mathcal P_0\L$ such that
$u_{f_1}=\Phi_A(f_1)$ is a fixed point of $\vartheta_1$, and
there is $\{g_t\}$ in the commutator subgroup of $\mathcal P\corr$
such that
$$\{k_t\}:=(\mathcal P\Phi_A)^{-1}\mathcal P_0\vartheta\mathcal P\Phi_A(\{f_t\})=\{f_t\}\cdot\{\hat h_t\}\cdot\{g_t\}$$
is an isotopy in $\U_1$. Since
$\Phi_A^{-1}\vartheta_1\Phi_A(f_1)=f_1$, it follows that
$\{f_t\}^{-1}\cdot\{k_t\}$ is a contractible loop. Therefore,
$\big[\{\hat h_t\}\big]_{\sim}=\big[\{g_t\}^{-1}\big]_{\sim}$ so
that the class of $\big[\{\hat h_t\}\big]_{\sim}$ is equal to $e$
in $H_1(\widetilde{\corr})$, as claimed.

\end{document}